\newcommand{\rmnum}[1]{\romannumeral #1}
\newcommand{\Rmnum}[1]{\expandafter\@slowromancap\romannumeral #1@}
\newcommand{\id}{id}
\DeclareFontFamily{OMX}{MnSymbolE}{}
\DeclareSymbolFont{MnLargeSymbols}{OMX}{MnSymbolE}{m}{n}
\DeclareFontShape{OMX}{MnSymbolE}{m}{n}{
    <-6>  MnSymbolE5
   <6-7>  MnSymbolE6
   <7-8>  MnSymbolE7
   <8-9>  MnSymbolE8
   <9-10> MnSymbolE9
  <10-12> MnSymbolE10
  <12->   MnSymbolE12
}{}
\DeclareFontShape{OMX}{MnSymbolE}{b}{n}{
    <-6>  MnSymbolE-Bold5
   <6-7>  MnSymbolE-Bold6
   <7-8>  MnSymbolE-Bold7
   <8-9>  MnSymbolE-Bold8
   <9-10> MnSymbolE-Bold9
  <10-12> MnSymbolE-Bold10
  <12->   MnSymbolE-Bold12
}{}
\let\llangle\@undefined
\let\rrangle\@undefined
\DeclareMathDelimiter{\llangle}{\mathopen}%
                     {MnLargeSymbols}{'164}{MnLargeSymbols}{'164}
\DeclareMathDelimiter{\rrangle}{\mathclose}%
                     {MnLargeSymbols}{'171}{MnLargeSymbols}{'171}
\renewenvironment{proof}[1][\proofname]{\noindent{\bfseries #1.  }}{\qed}
\newcommand{\Z}{\mathbbm{Z}}                     
\newcommand{\R}{\mathbbm{R}}                     
\newcommand{\T}{\mathbbm{T}}                     
\newcommand{\spec}{\mathrm{Spec}}               
\newcommand{\crit}{\mathrm{Crit\,}}               
\newcommand{\hess}{\mathrm{Hess\,}}             
\newcommand{\Cont}{\mathrm{Cont}}     
\newcommand{\RFH}{\mathrm{RFH}}    
\newcommand{\HF}{\mathrm{HF}}    
\newcommand{\CF}{\mathrm{CF}}
\newcommand{\h}{\mathrm{H}}
\newtheorem{thm}{Theorem}[section]               
\newtheorem*{thm*}{Theorem}               
\newtheorem{cor}[thm]{Corollary}        
\newtheorem*{cor*}{Corollary}        
\newtheorem{lem}[thm]{Lemma}            
\newtheorem{prop}[thm]{Proposition}     
\theoremstyle{definition}
\newtheorem{defn}[thm]{Definition}      
\newtheorem{rem}[thm]{Remark}           
\newtheorem{ex}[thm]{Example}           
 \newtheorem*{acknowledgement*}{\protect\acknowledgementname}
\newenvironment{claim}[1]{\par\noindent\underline{Claim:}\space#1}{}
 \providecommand{\acknowledgementname}{Acknowledgement}
\title{Translated points on hypertight contact manifolds}
\author{Matthias Meiwes and Kathrin Naef} 
\date{} 
\address{Matthias Meiwes\\
Department of Mathematics\\
WWU M\"unster}
\email{\texttt{matthias.meiwes@uni-muenster.de}}
\address{Kathrin Naef\\
Department of Mathematics\\
ETH Z\"urich}
\email{\texttt{kathrin.naef@math.ethz.ch}}
\date{\today}
\begin{document}

\begin{abstract}
	A contact manifold admittting a supporting contact form without contractible Reeb orbits is called hypertight. In this paper we construct a Rabinowitz Floer homology associated to an arbitrary supporting contact form for a hypertight contact manifold $\Sigma$, and use this to prove versions of conjectures of Sandon \cite{Sandon2013} and Mazzucchelli \cite{Mazzucchelli2015} on the existence of translated points and invariant Reeb orbits, and to show that positive loops of contactomorphisms give rise to non-contractible Reeb orbits.

\end{abstract}
\maketitle

\section{Introduction}
Floer theory has been used to define invariants for contact manifolds. One type of Floer homology is the so called Rabinowitz Floer homology, which was introduced by Cieliebak and Frauenfelder \cite{CieliebakFrauenfelder2009} and whose chain complex is generated by closed Reeb orbits. 

In order to define a Floer homology one always needs to show compactness for certain moduli spaces. Let $\left(\Sigma,\xi\right)$ be a closed coorientable contact manifold and $\alpha$ a supporting contact form. We want to define Rabinowitz Floer homology on the symplectisation $\Sigma\times(0,+\infty)$. Unfortunately compactness typically fails at the concave end.

A contact manifold $\left(\Sigma,\xi\right)$ is called \textbf{hypertight} if it admits a supporting contact form, say $\alpha_0$, without any contractible  Reeb orbits.  For instance, the three torus $\T^3$ equipped with the contact structure $\alpha_k=\cos(2\pi kr)\,ds+\sin(2\pi k r)\,dt$ is a hypertight contact manifold. Certain prequantisation spaces are hypertight as well, see Example \ref{ex:prequant} below. More constructions of hypertight contact manifolds can be found in \cite{ColinHonda2005}. Using an SFT-compactness result, Albers, Fuchs and Merry \cite{AlbersFuchsMerry2013} constructed Rabinowitz Floer homology on the symplectisation provided that the contact form is without contractible Reeb orbits. The main point of their argument is, that if a flow line would escape to $-\infty$ it converges to contractible Reeb orbits.


In this paper, we define Rabinowitz Floer homology $\RFH_*( \Sigma, \alpha)$ for \textit{any} contact form $\alpha$ supporting  a hypertight contact structure,  including those which do have contractible Reeb orbits. 
This Rabinowitz Floer homology $\RFH_*(\Sigma, \alpha)$ does not depend on the choice of supporting contact form. More precisely, denoting  by $\mathcal{C}(\xi)$ the set of all supporting contact forms of $\xi$, we construct an isomorphism between $\RFH_*\left(\Sigma,\alpha_1\right)$ and $\RFH_*\left(\Sigma,\alpha_2\right)$ for any two $\alpha_1,\alpha_2\in\mathcal{C}(\xi)$. 

We denote by $\Cont_0\left(\Sigma,\xi\right)$ the space of all contactomorphisms on $\Sigma$ which are contact-isotopic to the identity and similarly $\mathcal{P}\Cont_0\left(\Sigma,\xi\right)$ the space of all paths $\hat{\varphi}=\left\{\varphi_t\right\}_{t\in[0,1]}$ of contactomorphisms which start at the identity.

Let $\varphi\in\Cont_0\left(\Sigma,\xi\right)$ and $\alpha\in\mathcal{C}(\xi)$. Sandon \cite{Sandon2013} introduced the important notion of  translated points: a point $x\in\Sigma$ is a \textbf{translated point of $\varphi$ with respect to $\alpha$}, if there exists $\eta\in\R$ such that
\[
\varphi(x)=\varphi^\eta_R(x) \quad\text{and}\quad \left.\varphi^*\alpha\right|_x=\left.\alpha\right|_x.
\]
We stress that the definition of a translated point depends on the contact form.
	
 We denote by $\RFH_*\left(\Sigma,\alpha_1;\hat{\varphi}\right)$ the Rabinowitz Floer homology of the pair $(\hat{\varphi};\alpha)$ whose chain complex has as generators orbits where first one flows along a Reeb orbit and then along the path $\hat{\varphi}$ until one hits the Reeb orbit again. In particular, if $\hat{\varphi}=\id$ the generators are closed Reeb orbits, and (by definition) $\RFH_*( \Sigma, \alpha ; \id) = \RFH_*( \Sigma ,\alpha)$.

The main technical result of this paper is the following.
\begin{thm}\label{mainthm}
 Let $\left(\Sigma,\xi\right)$ be a hypertight contact manifold and $\hat{\varphi}\in\mathcal{P}\Cont_0\left(\Sigma,\xi\right)$. 
 For any two $\alpha_1,\,\alpha_2\in\mathcal{C}(\xi)$, the Rabinowitz Floer homology groups $\RFH_*( \Sigma, \alpha_i ; \hat{\varphi})$ are well defined, and there are isomorphisms 
 	\[
 	\RFH_*\left(\Sigma,\alpha_1;\hat{\varphi}\right)\cong\RFH_*\left(\Sigma,\alpha_2;\hat{\varphi}\right)\cong\mathrm{H}_{*+n-1}\left(\Sigma;\Z_2\right).
 	\]
 	
\end{thm}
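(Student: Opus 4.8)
The plan is to assemble the statement from four ingredients: a compactness-based well-definedness argument, invariance under change of the supporting contact form, invariance under homotopy of the path $\hat{\varphi}$, and a direct computation in a model case. First I would recall that the generators of $\RFH_*(\Sigma,\alpha;\hat{\varphi})$ are the critical points of a Rabinowitz-type action functional $\mathcal{A}_{\alpha}^{\hat{\varphi}}$ on the relevant component of the loop space of the symplectisation $\Sigma\times(0,+\infty)$, and that the only obstruction to a genuine Floer homology is compactness of the moduli spaces of (ordinary and continuation) trajectories. This is exactly where hypertightness enters: a trajectory escaping to the concave end would, after an SFT-type breaking, produce a contractible Reeb orbit of the \emph{fixed} reference form $\alpha_0$, contradicting the choice of $\alpha_0$. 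Granting the a priori estimates that make this precise (the technical core, carried out in the preceding sections), $\RFH_*(\Sigma,\alpha;\hat{\varphi})$ is well defined over $\Z_2$ with the normalised grading.

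Next I would prove independence of the supporting contact form. Given $\alpha_1,\alpha_2\in\mathcal{C}(\xi)$, write $\alpha_2=e^{f}\alpha_1$ and interpolate through the supporting forms $\alpha_s=e^{sf}\alpha_1$, $s\in[0,1]$; this amounts to moving the contact-type hypersurface inside the one fixed symplectisation of $\xi$ by an exact deformation, so the functionals $\mathcal{A}_{\alpha_s}^{\hat{\varphi}}$ change only by exact terms and the estimates of the first step apply uniformly in $s$. The resulting continuation homomorphisms $\RFH_*(\Sigma,\alpha_1;\hat{\varphi})\to\RFH_*(\Sigma,\alpha_2;\hat{\varphi})$ and back compose to the identity up to chain homotopy, which gives the first isomorphism in the statement.

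Then I would reduce to $\hat{\varphi}=\id$. The space $\mathcal{P}\Cont_0(\Sigma,\xi)$ is contractible --- the rescaling $\sigma\mapsto\{\varphi_{\sigma t}\}_{t\in[0,1]}$, $\sigma\in[0,1]$, retracts it onto the constant path $\id$ --- so it suffices to show that $\RFH_*(\Sigma,\alpha;\hat{\varphi})$ is invariant under homotopies of $\hat\varphi$ in $\mathcal{P}\Cont_0(\Sigma,\xi)$. Along any such homotopy the reference form $\alpha_0$, hence the absence of contractible Reeb orbits, is untouched, so the compactness of the first step survives (including for homotopies that move the endpoint $\varphi_1$), and the associated continuation maps are isomorphisms. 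Therefore $\RFH_*(\Sigma,\alpha;\hat{\varphi})\cong\RFH_*(\Sigma,\alpha;\id)=\RFH_*(\Sigma,\alpha)$.

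Finally, by the previous two steps it is enough to compute $\RFH_*(\Sigma,\alpha_0;\id)$. Since $\alpha_0$ has no contractible Reeb orbits, the only critical points of $\mathcal{A}_{\alpha_0}$ in the component defining the complex are the constant loops with vanishing Lagrange multiplier, and these form a single Morse--Bott critical manifold diffeomorphic to $\Sigma$; the Morse--Bott spectral sequence then degenerates, and computing the normal Conley--Zehnder data of this component produces the degree shift $n-1$ (with $\dim\Sigma=2n-1$), so $\RFH_*(\Sigma,\alpha_0;\id)\cong\mathrm{H}_{*+n-1}(\Sigma;\Z_2)$. I expect the real obstacle to lie in running the continuation arguments of the second and third steps with the \emph{same} uniform compactness bounds as the first step --- in particular, checking that the interpolations $\alpha_s$ and the endpoint-moving homotopies of $\hat{\varphi}$ do not let trajectories escape to the concave end --- and in pinning down the grading normalisation in the model computation so that it matches $\mathrm{H}_{*+n-1}$ on the nose rather than up to an overall shift.
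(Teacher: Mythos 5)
Your overall architecture coincides with the paper's: anchor the concave end of the symplectisation to the hypertight form $\alpha_0$ so that SFT breaking there produces contractible Reeb orbits of $\alpha_0$ (hence cannot occur), run continuation maps to change the supporting form, reduce to $\hat{\varphi}=\id$, and compute the Morse--Bott homology of the critical manifold $\Sigma\times\{1\}\times\{0\}$ for $\alpha_0$. Two points, however, are not merely technical bookkeeping. First, you never construct the object that makes your step~1 true: the Floer theory is \emph{not} run on the symplectisation $d(r\alpha)$ of the given form $\alpha$ (whose concave end would break onto contractible Reeb orbits of $\alpha$, which may exist), but on $d\lambda$ for an interpolating primitive $\lambda=f(x,r)\alpha_0$ with $f=rg$ near $\Sigma\times\{1\}$ and $f=r\epsilon$ near $r=0$, $\partial_r f>0$. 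Your phrase ``a contractible Reeb orbit of the fixed reference form $\alpha_0$'' presupposes exactly this construction; without it the compactness claim fails for forms $\alpha$ with contractible orbits, which is the whole point of the theorem.

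Second, and more seriously, your step~2 asserts that ``the estimates of the first step apply uniformly in $s$'' for the $s$-dependent continuation problem. This is the genuine gap: for Rabinowitz-type functionals the Lagrange multiplier $\eta$ is controlled only through the action (Lemma \ref{fundlemma}), the action along an $s$-dependent trajectory is controlled only up to the error term $\int_0^1\int_{S^1}(u(s))^*(\partial_s\lambda_s)\,ds$, and that error term is controlled only via the energy, which in turn is controlled via $\|\eta\|_\infty$. This circular dependency does not close for an arbitrary homotopy; it closes only under the quantitative smallness hypothesis $\sup_s\sup_{\Sigma\times(0,e^{2\nu}]}\|\partial_s\lambda_s\|_{J_s}<\rho$ (Lemma \ref{energybound}, following Bae--Frauenfelder), after which one must subdivide an arbitrary homotopy adiabatically into finitely many slow pieces and compose the resulting continuation maps. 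You flag this as ``the real obstacle'' at the end, but flagging it is not the same as resolving it: the resolution is the main technical content of the paper, and without the smallness condition and the adiabatic subdivision your continuation maps are not known to exist. By contrast, your step~3 (invariance in $\hat{\varphi}$) is unproblematic, since deforming the perturbation term is a compactly supported change and the standard estimates go through; and your step~4 matches the computation of \cite{AlbersFuchsMerry2013}.
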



The main application of Theorem \ref{mainthm} resolves certain cases of conjectures of Sandon and Mazzucchelli on the existence of translated points.

\begin{rem}
Translated points are a special case of leafwise intersection points. The study of leafwise intersection points was initiated by Moser \cite{Moser1978} and since then many authors have proved existence results, e.g. \cite{Banyaga1978}, \cite{EkelandHofer1989}, \cite{Hofer1990}, \cite{Ginzburg2007}, \cite{Ziltener2010}, \cite{AlbersFrauenfelder2010}, \cite{Sandon2013}. Albers and Frauenfelder \cite{AlbersFrauenfelder2010c} set up the variational problem so that it targets precisely leafwise intersections, respectively translated points.
\end{rem}

The following proves versions of a conjecture by Sandon \cite[Conjecture 1.2]{Sandon2013} for certain contact manifolds.


\begin{thm}\label{translated points for hypertight}
	Let $(\Sigma,\xi)$ be a hypertight contact manifold. Then:
	\begin{enumerate}
\item[\rmnum{1})] For any $\alpha\in\mathcal{C}(\xi)$ and for any $\varphi\in\Cont_0\left(\Sigma,\xi\right)$ there exists a translated point of $ \varphi$ with respect to $\alpha$.
\item[\rmnum{2})] For a generic pair $(\alpha, \varphi)$, $\alpha$ and $\varphi$ as above, the number of translated points of $\varphi$ with respect to $\alpha$ is bounded from below by 
$\sum_{i=0}^{\dim(\Sigma)}\dim \mathrm{H}_i(\Sigma ;\Z_2)$.
\end{enumerate}
\end{thm}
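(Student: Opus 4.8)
The plan is to deduce Theorem \ref{translated points for hypertight} from the main technical result, Theorem \ref{mainthm}, by the standard Rabinowitz–Floer machinery that relates translated points to the generators of a perturbed Rabinowitz action functional. First I would fix $\alpha\in\mathcal{C}(\xi)$ and $\varphi\in\Cont_0(\Sigma,\xi)$, choose a contact isotopy $\hat\varphi=\{\varphi_t\}_{t\in[0,1]}\in\mathcal P\Cont_0(\Sigma,\xi)$ from the identity to $\varphi$, and observe that translated points of $\varphi$ with respect to $\alpha$ are exactly the ``Rabinowitz chords'' of the pair $(\hat\varphi;\alpha)$ with trivial Reeb part, i.e. the critical points of the perturbed Rabinowitz action functional $\mathcal A_{\hat\varphi,\alpha}$ whose underlying loop is constant. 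The key point, which I would record as a lemma, is that after a generic perturbation of $\hat\varphi$ (or of $\alpha$) all critical points of $\mathcal A_{\hat\varphi,\alpha}$ are nondegenerate, so $\RFH_*(\Sigma,\alpha;\hat\varphi)$ has a well-defined chain complex whose generators of ``Reeb length zero'' are precisely the translated points; by construction of $\RFH$ in this paper every critical point contributes one generator.

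For part \rmnum{1}), I would argue by contradiction: if $\varphi$ had no translated point with respect to $\alpha$, then after a small contact-isotopy perturbation (which does not create translated points near none if we start with an open dense absence — here one must be slightly careful and instead perturb to make everything nondegenerate while keeping the chord set finite) the functional $\mathcal A_{\hat\varphi,\alpha}$ would have only critical points with nonzero Reeb period. One then needs a mechanism forcing the existence of a ``short'' generator: the comparison in Theorem \ref{mainthm} gives $\RFH_*(\Sigma,\alpha;\hat\varphi)\cong\mathrm H_{*+n-1}(\Sigma;\Z_2)\neq 0$, and a standard continuation/interpolation argument between $\hat\varphi$ and the constant path $\id$ shows that the generators of bounded action are captured, in the limit as the isotopy shrinks, by translated points; absence of translated points would make the relevant part of the complex vanish in a degree where the homology is nonzero, a contradiction. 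Concretely I expect to use an action-window argument: pick $\epsilon>0$ smaller than the minimal period of a non-contractible Reeb orbit of $\alpha$ (this is where hypertightness enters, guaranteeing a positive spectral gap) and shrink $\hat\varphi$ so that all non-translated-point critical points have $|\eta|\geq\epsilon$; then $\RFH$ in the action window $(-\epsilon,\epsilon)$ is computed purely by translated points and must be nonzero by Theorem \ref{mainthm}.

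For part \rmnum{2}), once nondegeneracy is arranged generically, a Morse-theoretic count applies: the generators of $\RFH_*$ of Reeb length zero are the translated points, the boundary operator restricted to this part is (after the action-window truncation above) a genuine Morse-type differential whose homology injects the $\Z_2$-homology of $\Sigma$ appearing in Theorem \ref{mainthm}, so the number of such generators is at least $\dim_{\Z_2}\mathrm H_*(\Sigma;\Z_2)=\sum_{i}\dim\mathrm H_i(\Sigma;\Z_2)$. I would phrase this via the rank inequality $\#\{\text{translated points}\}\geq\dim \HF_{*}^{(-\epsilon,\epsilon)}\geq\dim \mathrm H_{*+n-1}(\Sigma;\Z_2)$, the last step being the content of Theorem \ref{mainthm} together with the fact that the truncated homology surjects onto (or is isomorphic to) the full $\RFH$ in the relevant degrees.

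The main obstacle I anticipate is the action-window / interpolation step: one must show rigorously that shrinking the contact isotopy $\hat\varphi$ toward the identity does not move the ``long'' generators into the small action window and does not destroy the identification of short generators with translated points — i.e. one needs a uniform lower bound (from hypertightness) on the action of all critical points whose Reeb part is a genuine non-contractible orbit, together with a compactness statement for the relevant moduli spaces of Floer cylinders that remains valid for all sufficiently small isotopies. Establishing this uniformity, and checking that the truncated Floer complex really does compute what Theorem \ref{mainthm} asserts in the window $(-\epsilon,\epsilon)$, is the technical heart of the deduction; everything else is bookkeeping with the Morse inequalities.
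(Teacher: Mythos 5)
There is a genuine gap, and it sits at the very first step: your identification of translated points with the critical points of $\mathcal{A}^{\hat{\varphi}}_{(\alpha_0,\alpha_1)}$ ``with trivial Reeb part'' is not the correct correspondence. By definition a translated point comes with an arbitrary time shift $\eta\in\R$, and Lemma \ref{crit=trans} says that \emph{every} critical point $(x,r,\eta)$, whatever the value of $\eta$, produces a translated point, namely $p=x(\tfrac12)$ with time shift $-\eta$ (the critical loop is a Reeb arc of length $\eta$ concatenated with the isotopy, not a constant loop). Once this is in place, part \rmnum{1}) needs none of your action-window machinery: if $\varphi$ had no translated point with respect to $\alpha$, the functional would have \emph{no critical points at all}, hence would be trivially Morse with $\RFH(\Sigma,\alpha;\hat{\varphi})=0$, contradicting $\RFH_*(\Sigma,\alpha;\hat{\varphi})\cong \mathrm{H}_{*+n-1}(\Sigma;\Z_2)$ from Theorem \ref{mainthm}. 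Your alternative route — shrinking $\hat{\varphi}$ towards $\operatorname{id}$ and isolating a small action window — does not work for a \emph{fixed} $\varphi$, since deforming the isotopy changes its endpoint and hence the set of translated points you are trying to detect; the spectral-gap/window arguments in the paper are used elsewhere (for the spectral number $c(\operatorname{id};\alpha)$ and for Remark \ref{rem:translated points} \rmnum{2})), not to prove existence.

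For part \rmnum{2}) the same misidentification hides the actual difficulty. Counting critical points of the Morse functional gives at least $\sum_i\dim\mathrm{H}_i(\Sigma;\Z_2)$ \emph{critical points}, but the map $(x,r,\eta)\mapsto x(\tfrac12)$ from critical points to translated points can fail to be injective: if a translated point lies on a closed contractible Reeb orbit, several values of $\eta$ yield the same point $p$. This is exactly why the statement is for a generic \emph{pair} $(\alpha,\varphi)$ and why the paper invokes Proposition \ref{genericalpha}: for generic $\alpha$ and then generic $\varphi$, no critical point meets the union $\mathcal{R}$ of contractible closed Reeb orbits, so distinct critical points give distinct translated points and the Morse-theoretic lower bound transfers. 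Your proposal never addresses this injectivity issue, and your count of only the ``Reeb length zero'' generators would in general undercount (and is not what the homology of Theorem \ref{mainthm} computes in any case).
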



The above result is already known for supporting contact forms $\alpha$ without contractible Reeb orbits, \cite{AlbersFuchsMerry2013}. Theorem \ref{translated points for hypertight} extends this to \emph{all} supporting contact forms. In \cite{Sandon2013} similar results were proved for the specific contact manifolds $S^{2n-1}$ and $\R \mathrm{P}^{2n-1}$, equipped with their standard contact forms.

\begin{rem}\phantomsection \label{rem:translated points}$ $
	\begin{enumerate} 
		\item[\rmnum{1})] The second statement of Theorem \ref{translated points for hypertight} can be improved: for \emph{any} $\alpha$ and generic $\varphi$ one of the  following holds: (a) there is a translated point on a closed contractible Reeb orbit or (b) there are at least  $\sum_{i=0}^{\dim(\Sigma)}\dim \mathrm{H}_i(\Sigma ;\Z_2)$ many translated points.


\item[\rmnum{2})] If the oscillation norm of the associated contact Hamiltonian (cf. Definition \ref{def:contactham}) of $\varphi$ is smaller than the smallest contractible Reeb period of the contact form option (b) above is always true. 		
\end{enumerate}				
\end{rem}


\begin{ex}
\label{ex:prequant}
 An important class of examples of hypertight manifolds come from certain \textbf{prequantisation spaces}. Let $\left(M,\omega\right)$ be a closed symplectic manifold and assume that the de Rham cohomology class $[\omega]$ has a primitve integral lift in $H^2\left(M;\Z_2\right)$. Now, we look at the circle bundle $p:~\Sigma_k\rightarrow M$ with corresponding Euler class $k[\omega],~0\neq k\in\Z$ and connection $1$-form $\alpha$ with $p^*(k\omega)=-d\alpha$. Then, $\left(\Sigma_k,\alpha\right)$ is a contact manifold with periodic Reeb flow. The closed Reeb orbits are the fibres of the bundle. Moreover, the long exact sequence of the fibration
 \[
  \pi_2\left(M\right)\overset{q_k}{\rightarrow}\pi_1\left(S^1\right)\rightarrow\pi_1\left(\Sigma_k\right)\rightarrow\pi_1\left(M\right)\rightarrow 0.
 \]
 shows that the map $q_k$ is non-trivial if and only if the homotopy class of the fibre is torsion. Note that if $q_k$ is non-trivial, then $q_{nk}$ is non-trivial for each $n\neq 0$.
 It follows from \cite[Theorem 1.5]{AlbersFuchsMerry2013} that a prequantisation space is hypertight if the fibre is not torsion.

\end{ex}


Another application of Theorem \ref{mainthm} concerns the  existence of $\varphi$-invariant Reeb orbits. We define $\mathcal{S}\Cont_0(\Sigma,\alpha):=\left\{\left.\varphi\in\Cont_0(\Sigma,\xi)\right|\varphi^*\alpha=\alpha\right\}$ to be the set of strict contactomorphisms in $\Cont_0(\Sigma,\xi)$ with respect to the supporting contact form $\alpha$. For $\varphi\in \mathcal{S}\Cont_0\left(\Sigma,\xi\right)$, a Reeb orbit $x:~\R\rightarrow\Sigma$ is called \textbf{$\varphi$-invariant} if $\varphi(x(t))=x(t+\tau)$ for some $\tau\in\R\setminus\left\{0\right\}$.  In \cite[Conjecture 1.2]{Mazzucchelli2015} Mazzucchelli conjectures that for $\varphi\in \mathcal{S}\Cont_0\left(\Sigma,\xi\right)$ there always is a $\varphi$-invariant Reeb orbit.
Specialising Theorem \ref{translated points for hypertight} i) to strict contactomorphims, we prove Mazzucchelli's conjecture for hypertight contact manifolds:

\begin{cor}
	Let $\left(\Sigma,\xi\right)$ be a hypertight contact manifold. Let $\alpha\in\mathcal{C}(\xi)$ and fix $\varphi\in \mathcal{S}\Cont_0\left(\Sigma,\alpha\right)$. Then either there exists a $\varphi$-invariant Reeb orbit or an entire Reeb orbit is left fixed by $\varphi$.
\end{cor}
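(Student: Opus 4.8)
The plan is to deduce the corollary directly from Theorem~\ref{translated points for hypertight}~i), using only one elementary fact about strict contactomorphisms. Fix $\varphi\in\mathcal{S}\Cont_0(\Sigma,\alpha)$, so that $\varphi^*\alpha=\alpha$. Applying Theorem~\ref{translated points for hypertight}~i) to the pair $(\alpha,\varphi)$ yields a translated point $x\in\Sigma$ of $\varphi$ with respect to $\alpha$; that is, there is some $\eta\in\R$ with $\varphi(x)=\varphi^\eta_R(x)$ and $\varphi^*\alpha|_x=\alpha|_x$. Note that here the second condition holds automatically, since $\varphi$ is strict. Write $x(t):=\varphi^t_R(x)$ for the Reeb trajectory through $x$.

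The key step is the observation that a strict contactomorphism commutes with the Reeb flow. Indeed, from $\varphi^*\alpha=\alpha$ one gets $\varphi^*d\alpha=d\alpha$, and then the pushforward $\varphi_*R$ satisfies $\alpha(\varphi_*R)\circ\varphi=(\varphi^*\alpha)(R)=1$ and $\iota_{\varphi_*R}d\alpha=0$ by the analogous computation, so $\varphi_*R=R$ by uniqueness of the Reeb vector field. Integrating, $\varphi\circ\varphi^t_R=\varphi^t_R\circ\varphi$ for all $t\in\R$. Consequently
\[
\varphi(x(t))=\varphi\bigl(\varphi^t_R(x)\bigr)=\varphi^t_R\bigl(\varphi(x)\bigr)=\varphi^t_R\bigl(\varphi^\eta_R(x)\bigr)=\varphi^{t+\eta}_R(x)=x(t+\eta)
\]
for every $t$; in particular $\varphi$ maps the Reeb trajectory through $x$ to itself, shifting its time parameter by the \emph{constant} $\eta$.

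It then remains only to split into cases according to the value of $\eta$. If $\eta\neq 0$, the computation above says precisely that $x(\,\cdot\,)$ is a $\varphi$-invariant Reeb orbit, with $\tau=\eta$. If $\eta=0$, then $\varphi(x(t))=x(t)$ for all $t$, i.e.\ the entire Reeb orbit through $x$ is left fixed by $\varphi$. This gives the stated dichotomy. I do not expect any real obstacle in this deduction: all of the analytic substance---the compactness of the relevant moduli spaces for arbitrary supporting contact forms of a hypertight structure, and the resulting computation of $\RFH_*$---is already carried out in Theorem~\ref{mainthm} and hence in Theorem~\ref{translated points for hypertight}. The corollary is a short formal consequence; the only point requiring (minor) care is the commutation of $\varphi$ with the Reeb flow invoked above, which is where the hypothesis $\varphi\in\mathcal{S}\Cont_0(\Sigma,\alpha)$ rather than merely $\varphi\in\Cont_0(\Sigma,\xi)$ is used.
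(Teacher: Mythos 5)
Your proposal is correct and follows essentially the same route as the paper: apply Theorem \ref{translated points for hypertight} i) to obtain a translated point, use that a strict contactomorphism commutes with the Reeb flow to propagate the translation along the whole Reeb trajectory, and split into the cases $\eta\neq 0$ and $\eta=0$. The only difference is that you spell out the proof of the commutation $\varphi_*R=R$, which the paper merely asserts.
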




Another application of Theorem \ref{mainthm} is the study of the existence of \emph{non-contractible} closed Reeb orbits. 
Given a loop $\hat{\varphi}=\left\{\varphi_t\right\}_{t\in[0,1]}\in\mathcal{P}\Cont_0\left(\Sigma,\xi\right)$, let us denote by $u_{\hat{\varphi}}\in [S^1, \Sigma]$ the free homotopy class of the loop $t\mapsto\varphi_t(x)$. Recall a  loop of contactomorphisms is called \textbf{positive} if the associated contact Hamiltonian is positive, see Definition \ref{def:contactham}. In \cite{AlbersFuchsMerry2013} it was shown that on hypertight contact manifolds, there do not exist any \emph{contractible} positive loops of contactomorphisms.

\begin{thm}\label{thm:noncontractible orbits}
  Let $(\Sigma,\xi)$ be a hypertight contact manifold. Assume there exists a positive loop $\hat{\varphi}$ of contactomorphisms. Then the class $u_{\hat{\varphi}}$ is a non-trivial element of $[S^1, \Sigma]$, and for any $\alpha\in \mathcal{C}(\xi)$ there exists a closed Reeb orbit of $\alpha$ in the free homotopy class of $-u_{\hat{\varphi}}$ (which is thus necessarily non-contractible).
  
  
\end{thm}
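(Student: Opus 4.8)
The plan is to read the statement off Theorem \ref{mainthm} by bookkeeping free homotopy classes, positivity of $\hat{\varphi}$ entering only through an action estimate.

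First, statement (1) will follow from the existence statement in (2): applying (2) to a supporting form $\alpha_0\in\mathcal{C}(\xi)$ without contractible Reeb orbits produces a closed Reeb orbit of $\alpha_0$ in the class $-u_{\hat{\varphi}}$, and by hypertightness such an orbit is not contractible, so $-u_{\hat{\varphi}}$, hence $u_{\hat{\varphi}}$, is non-trivial in $[S^1,\Sigma]$. So I would concentrate on producing, for an arbitrary $\alpha\in\mathcal{C}(\xi)$, a closed Reeb orbit in the class $-u_{\hat{\varphi}}$. By Theorem \ref{mainthm}, $\RFH_*(\Sigma,\alpha;\hat{\varphi})\cong\h_{*+n-1}(\Sigma;\Z_2)\neq 0$. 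The chain complex is generated by orbits that first run once along a closed Reeb orbit $\gamma$ of $\alpha$ and then along the loop $\hat{\varphi}$ (with the constant orbits, for which $\gamma$ is a point, as a degenerate case); such an orbit is a genuine loop in $\Sigma$, and its free homotopy class is the concatenation $[\gamma]\cdot u_{\hat{\varphi}}$, which is $u_{\hat{\varphi}}$ on the constant orbits. Since the differential and all continuation maps count cylinders whose ends are such orbits, the whole package — complexes, homology, and the isomorphisms of Theorem \ref{mainthm} — splits over free homotopy classes, $\RFH_*(\Sigma,\alpha;\hat{\varphi})=\bigoplus_{c}\RFH^c_*(\Sigma,\alpha;\hat{\varphi})$, and the summand $\RFH^0_*$ indexed by the trivial class is generated precisely by the closed Reeb orbits of $\alpha$ in the class $-u_{\hat{\varphi}}$, together with the constant orbits in the (a posteriori excluded) case $u_{\hat{\varphi}}=0$. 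It therefore suffices to show that $\RFH^0_*(\Sigma,\alpha;\hat{\varphi})$ is non-zero and is not carried by the constant orbits alone.

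This is where positivity enters. Writing $h_t>0$ for the contact Hamiltonian of $\hat{\varphi}$, the Rabinowitz action of a generator that follows a closed Reeb orbit of period $\eta$ through $p$ and then $\hat{\varphi}$ equals $\eta+\int_0^1 h_t(\varphi_t(p))\,dt$; hence every constant orbit, and more generally every generator with positively oriented Reeb piece, has action bounded below by $\delta:=\min_{t,x}h_t(x)>0$, uniformly. Passing to the iterates $\hat{\varphi}^{N}\in\mathcal{P}\Cont_0(\Sigma,\xi)$, which are again loops with $\RFH_*(\Sigma,\alpha;\hat{\varphi}^{N})\cong\h_{*+n-1}(\Sigma;\Z_2)$ by Theorem \ref{mainthm}, the constant orbits of $\hat{\varphi}^{N}$ have action at least $N\delta$. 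On the other hand, a compactness/action argument of the type used in \cite{AlbersFuchsMerry2013} to exclude contractible positive loops shows that the non-zero class of $\RFH_*(\Sigma,\alpha;\hat{\varphi}^{N})$ is represented by a cycle of action bounded independently of $N$. For $N$ large such a cycle cannot be supported on constant orbits, so some non-constant generator appears, i.e. a genuine closed Reeb orbit of $\alpha$; tracking the free homotopy class — the $N$-th stage detects the class $-u_{\hat{\varphi}^N}$, i.e. $N$ copies of $-u_{\hat{\varphi}}$, and comparing consecutive stages isolates the contribution of one additional copy of the loop — yields such an orbit in the class $-u_{\hat{\varphi}}$.

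The main obstacle is precisely this last estimate: one must show that positivity of $\hat{\varphi}$ cannot keep pushing the fundamental class of $\RFH_*$ to infinite action as one concatenates more copies of the loop, i.e. that the non-vanishing persists in a fixed action window — this is the quantitative heart, the analogue of the compactness argument behind the non-existence of contractible positive loops — together with checking that the homotopy-class bookkeeping genuinely produces a single copy of $-u_{\hat{\varphi}}$ and not a multiple. The remaining points are comparatively routine: that the orbit produced is really non-constant (which is exactly what the bound $\delta>0$ guarantees once the action window is chosen below $\delta$), that the splitting by free homotopy class is compatible with the full Floer construction and with the isomorphisms of Theorem \ref{mainthm}, and the reduction of (1) to (2) via a hypertight supporting form.
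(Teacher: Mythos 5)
Your proposal diverges from the paper's proof and contains two genuine gaps. The paper does not iterate the loop at all: it introduces the spectral number $c(\hat{\varphi};\alpha)=\inf\{c \mid [\Sigma_{\hat{\varphi}}]\in\iota^c_{\hat{\varphi}}(\RFH^c)\}$, proves $c(\operatorname{id};\alpha)=0$ for \emph{every} supporting form $\alpha$ (Corollary \ref{spectral0}), and then uses the monotonicity $l_t\leq k_t\Rightarrow c(\hat{\varphi};\alpha)\geq c(\hat{\psi};\alpha)$ together with $c(t\mapsto\varphi_{R_\alpha}^{t\epsilon};\alpha)=-\epsilon$ to conclude $c(\hat{\varphi};\alpha)\leq-\min l_t<0$ for a positive loop. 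Since the spectral number is a critical value and critical points are contractible concatenations of a Reeb orbit with $t\mapsto\varphi_t(x)$, a nonzero value of $c$ \emph{is} a nonconstant closed Reeb orbit in the class $-u_{\hat{\varphi}}$, already for $N=1$. The step you flag as "the quantitative heart" — that the fundamental class stays in a controlled action window — is exactly $c(\operatorname{id};\alpha)=0$ for arbitrary $\alpha$, and this is not a soft consequence of the compactness results you cite: it occupies Lemma \ref{T}, Proposition \ref{spectralcont} and the proof of Corollary \ref{spectral0}, where one must show that for sufficiently slow homotopies of contact forms the continuation maps cannot shift action by more than the spectral gap $T>0$ around $0$. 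Asserting this "by analogy with the exclusion of contractible positive loops" leaves the main content of the argument unproved.

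The second gap is the homotopy-class bookkeeping at the end of your iteration scheme. A nonconstant generator of $\RFH(\Sigma,\alpha;\hat{\varphi}^N)$ is a closed Reeb orbit in the class $-u_{\hat{\varphi}^N}=-Nu_{\hat{\varphi}}$, and there is no mechanism — homological or otherwise — by which "comparing consecutive stages" extracts from it a closed Reeb orbit in the class $-u_{\hat{\varphi}}$: a loop representing $-Nu_{\hat{\varphi}}$ need not be an iterate, and the continuation maps $\Phi_{\hat{\varphi}^N,\hat{\varphi}^{N+1}}$ relate the two homologies abstractly without matching generators. So even granting the uniform action bound, your argument produces an orbit in the wrong class for every $N\geq2$, and for $N=1$ it reduces to showing directly that the fundamental class is represented strictly below the action of the "constant" generators — which is precisely the spectral-number inequality $c(\hat{\varphi};\alpha)<0=c(\operatorname{id};\alpha)$ that the paper establishes. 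A smaller but related inaccuracy: with the paper's functional the action at a critical point is exactly $\eta$ (the Reeb period; the term $\lambda(X_L)-L$ cancels, Remark \ref{actionatcrit}), so constant generators have action $0$, not $\geq N\delta$; positivity enters not through the action of constants but through the comparison of $\hat{\varphi}$ with the Reeb path $t\mapsto\varphi_{R_\alpha}^{t\epsilon}$. Your reduction of the non-triviality of $u_{\hat{\varphi}}$ to the existence statement via $\alpha_0$ is fine and matches the paper.
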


Note that again this result holds for any supporting contact form.

\begin{ex}
	If a contact manifold $\left(\Sigma,\xi\right)$ admits a supporting contact form with periodic Reeb flow then the Reeb flow itself constitutes a positive loop. Thus prequantisation spaces always admit a positive loop. If in addition the fibre is not torsion then they are examples of hypertight contact manifolds with a positive loop of contactomorphisms.
\end{ex}

\begin{rem}
In a sequel to the present paper, we extend the constructions in this paper to deduce analogous results for dynamically convex manifolds. Here a contact manifold is called \textbf{dynamically convex} if the normalised Conley Zehnder index of every closed contractible Reeb orbit is nonnegative, see \cite{HoferWysockiZehnder}.
\end{rem}

\acknowledgementname: We thank Peter Albers and Will Merry for many enriching discussions and helpful comments. We also thank Paul Biran for his advice.

\section{The perturbed Rabinowitz action functional}

In this paper, we always assume that the contact manifold is hypertight.

Let $\left(\Sigma,\xi\right)$ be a closed coorientable contact manifold with $\xi$ a hypertight contact structure. Let $\alpha_0\in\mathcal{C}(\xi)$ be a supporting contact form without contractible Reeb orbits. Denote by $R_0$ the Reeb vector field of $\alpha_0$ and by $\varphi_{R_0}^s:\Sigma\rightarrow\Sigma$ its Reeb flow. Let $\alpha_1\in\mathcal{C}(\xi)$ be any other supporting contact form, which possibly has contractible Reeb orbits. Then, there is a function $g:\Sigma\rightarrow (0,+\infty)$ such that $\alpha_1=g\cdot\alpha_0$.


Let $M:=\Sigma\times(0,+\infty)$. We want to equip $M$ with a symplectic form with primitive $\lambda$ such that $\lambda$ equals $r\alpha_1$ near $\Sigma\times\left\{1\right\}$ and $\lambda$ equals $r\alpha_0$ near $\Sigma\times \left\{0\right\}$. For this contact form to be symplectic it is crucial that we can homotope $r\alpha_1$ to $r\alpha_0$ in an increasing way along $r$. Let $0<\epsilon<\inf_{x\in\Sigma}g(x)$ and let $\nu>0$. Define
\[
 \lambda:=f(x,r)\alpha_0,
\]
where $f:M\rightarrow (0,+\infty)$ is defined as
\begin{equation}\label{lambda}
f(x,r)=
 \begin{cases}
  r g(x),&\text{ for } r> e^{-3\nu}\\
  r\epsilon,&\text{ for } r<e^{-4\nu}
 \end{cases}
\end{equation}
and such that $\frac{\partial f}{\partial r}>0$. 
A direct computation shows that $d\lambda$ is nondegenerate iff $\frac{\partial f}{\partial r}>0$.

We set 
\[
\Omega_{\nu}(\alpha_0, \alpha_1) = \left\{\lambda \in \Omega^1(M) \, | \, \lambda=f\alpha_0, \, f \, \text{satisfies }\eqref{lambda} \, \text{for some } \, \epsilon >0  \quad \text{and} \quad \frac{\partial f}{\partial r} > 0  \, \right\}.
\]
In the construction below we will fix $\lambda \in \Omega_{\nu}(\alpha_0, \alpha_1)$ after choosing a suitable $\nu>0$.

Suppose $\varphi:\Sigma\rightarrow\Sigma$ is a contactomorphism. Then there is a smooth positive function $\rho:\Sigma\rightarrow(0,+\infty)$ such that $\varphi^*\alpha_1=\rho\alpha_1$.

In the following, we always consider a contactomorphism which is contact-isotopic to the identity.

Let $\varphi\in\Cont_0\left(\Sigma,\xi\right)$, then there is a path $\hat{\varphi}=\left\{\varphi_t\right\}_{t\in[0,1]}$ with $\varphi_t=\mathbbm{1}$, for $t\in\left[0,\frac12\right]$ and $\varphi_1=\varphi$. We call such a path \textbf{admissible}. Also, there exists a smooth family of positive functions $\rho_t:\Sigma\rightarrow (0,+\infty)$ such that $\varphi_t^*\alpha_1=\rho_t\alpha_1$. 

\begin{defn}\label{def:contactham}
The \textbf{contact Hamiltonian} of $\hat{\varphi}$ with respect to $\alpha_1$ is the function $l:\Sigma\times[0,1]\rightarrow\R$ defined by
\begin{equation}\label{contactham}
 l_t\circ\varphi_t=\alpha_1\left(\frac{d}{dt}\varphi_t\right).
\end{equation}
\end{defn}

We extend $l$ to a Hamiltonian function $L:M\times [0,1] \rightarrow \R$ by 
\[
 L_t(x,r):=rl_t(x).
\]
The Hamiltonian diffeomorphism $\phi_L^t:M\rightarrow M$ associated to $L$ and the symplectic form $d\lambda$ is given by
\begin{equation}\label{hamdiffeo}
 \phi_L^t(x,r)=\left(\varphi_t(x),r\rho_t(x)^{-1}\right)
\end{equation}
on the interior of $\left\{\left.(x,r)\in M\right|~\lambda=r\alpha_1\right\}$ and thus in particular on $\Sigma\times(e^{-\nu},e^{\nu})$.

Moreover, let 
\[
 H(r)=\begin{cases}
       c,&\text{ for } r\in(e^{2\nu},+\infty)\\
       r-1,&\text{ for } r\in (e^{-\nu},e^\nu)\\
       -c,&\text{ for } r\in (0,e^{-2\nu})
      \end{cases}
\]
for some constant $c\geq\max\left\{1-e^{-\nu},e^\nu -1\right\}$ such that $H'(r)\geq 0$. We will also use $H$ for the function $H(x,r):=H(r)$ on $M$. Note that
\[
 X_H(x,r)=\frac{\partial H}{\partial r}(x,r)R_1(x)
\]
since $\frac{\partial H}{\partial r}=0$ on the region where $\lambda\neq r\alpha_1$.
Here, $R_1$ denotes the Reeb vector field of $\alpha_1$.

Now, let $\kappa:S^1\rightarrow\R$ be a smooth function with
\[
 \kappa(t)=0,\quad \text{for all}\quad t\in\left[\frac12,1\right]\quad\text{and}\quad \int_0^1\kappa(t)\,dt=1.
\]

We use $\kappa$ to modify the Hamiltonian $H$ to $\kappa(t)H(x,r)$.

In the following, we want to cutoff the Hamiltonian $L$.
 In order to do so, we have to take care that we cut off outside of the region where our perturbed functional will have its periodic orbits. Define a smooth function $\beta_\nu\in C^{\infty}\left(0,\infty),[0,1]\right)$ such that
\[
 \beta_\nu(r)=\begin{cases}
               1,&\quad r\in\left(e^{-\nu},e^{\nu}\right)\\
               0,&\quad r\in\left(0,e^{-2\nu}\right]\cup\left[e^{2\nu},+\infty\right)
              \end{cases}.
\]
We use $\beta_\nu$ to cutoff the Hamiltonian $L$ via $\beta_\nu L$.

Let $\mathcal{L}M$ denote the set of contractible smooth loops $u=(x,r):S^1\rightarrow M$.
Finally, we are ready to define the \textit{perturbed} Rabinowitz action functional associated to $\hat{\varphi}$,
\[\mathcal{A}^{(\hat{\varphi},\nu)}_{(\alpha_0,\alpha_1)}:\mathcal{L}M\times\R\rightarrow\R\]

\begin{equation}\label{functional}
 \mathcal{A}^{(\hat{\varphi},\nu)}_{(\alpha_0,\alpha_1)}(x,r,\eta):=\int_{S^1}(x,r)^*\lambda-\eta\int_{S^1}\kappa(t)H(r)dt-\int_{S^1}\beta_\nu(r)L_t(x,r)\,dt.
\end{equation}

A point $\left(u(t),\eta\right)\in M$ (where $u(t)=(x(t),r(t))$) is a critical point of $\mathcal{A}^{(\hat{\varphi},\nu)}_{(\alpha_0,\alpha_1)}$ if
\begin{align*}
\begin{cases}
	&\dot{u}(t)=\eta\kappa(t)X_{H}(u(t))+\beta_\nu(r(t))X_L(u(t))\\
	&\int_0^1\kappa(t)H(u(t))\,dt=0.
\end{cases}
\end{align*}

Note that it follows that for a critical point $H(u(t))=0$ for $t\in[0,\frac12]$ and thus $r(t)=1$ for $t\in[0,\frac12]$.

\begin{lem}\label{lem:Cadmissible}
 Assume that $\hat{\varphi}=\left\{\varphi_t\right\}_{t\in[0,1]}\in\mathcal{P}\Cont_0(\Sigma,\xi)$ is an admissible path of contactomorphisms and define 
\begin{equation}\label{eq:Cadmissible}
 C(\hat{\varphi};\alpha_1):=\max_{t\in[0,1]}\int_0^t\max_{x\in\Sigma}\left|\frac{\dot{\rho}_s(x)}{\rho_s(x)^2}\right|\,ds.
\end{equation}
Let $(x,r,\eta)\in\displaystyle{\crit\left(\mathcal{A}^{(\hat{\varphi},\nu)}_{(\alpha_0,\alpha_1)}\right)}$. If $\nu>C(\hat{\varphi};\alpha_1)$  then every critical point of $\mathcal{A}^{(\hat{\varphi},\nu)}_{(\alpha_0,\alpha_1)}$ has image contained in $\Sigma\times(e^{-\nu},e^{\nu})\times\R$, i.e. $r\left(S^1\right)\subset\left(e^{-\nu},e^{\nu}\right)$. 
\end{lem}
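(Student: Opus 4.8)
The plan is to extract the $\partial_r$-component of a critical point from the two critical point equations, identify the solution with the symplectisation lift $\phi_L^t$ in the region where the cutoffs are inactive, and close the estimate by a first-exit-time argument on $S^1$. First I would record that $r\equiv 1$ on $[0,\tfrac12]$: since $\hat\varphi$ is admissible, $\varphi_t=\mathbbm{1}$ there, so its contact Hamiltonian vanishes, $l_t\equiv 0$, hence $L\equiv 0$ and $X_{\beta_\nu L}\equiv 0$ on $[0,\tfrac12]$; as $X_H=\tfrac{\partial H}{\partial r}R_1$ has no $\partial_r$-component, the first critical point equation gives $\dot r\equiv 0$ on $[0,\tfrac12]$, so $r$ is constant there, and the second then reads $0=\int_0^1\kappa(t)H(u(t))\,dt=H(r(0))\int_0^{1/2}\kappa(t)\,dt=H(r(0))$, whence $r(0)=1$ since $H$ is nondecreasing with $H^{-1}(0)=\{1\}$. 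It therefore suffices to control $r$ on $[\tfrac12,1]$, where $\kappa\equiv 0$, so that a critical point obeys $\dot u(t)=\beta_\nu(r(t))\,X_{L_t}(u(t))$.

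For this I would argue by contradiction. By \eqref{lambda} the set $\{\lambda=r\alpha_1\}$ contains $\Sigma\times(e^{-2\nu},e^{2\nu})$, hence $\supp\beta_\nu$; on it, and in particular on $\Sigma\times(e^{-\nu},e^{\nu})$ where moreover $\beta_\nu\equiv 1$, the Hamiltonian flow of $L$ is the lift $\phi_L^t(x,r)=(\varphi_t(x),r\rho_t(x)^{-1})$ of \eqref{hamdiffeo}. Suppose $r(S^1)\not\subset(e^{-\nu},e^{\nu})$. Since $r(\tfrac12)=1$ and $r$ is continuous, there is a smallest $T\in(\tfrac12,1]$ with $r(T)\in\{e^{-\nu},e^{\nu}\}$ and $r(t)\in(e^{-\nu},e^{\nu})$ for $t\in[\tfrac12,T)$. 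On $[\tfrac12,T)$ we have $\beta_\nu(r(t))\equiv 1$, so $\dot u=X_{L_t}(u)$; since $X_{L_t}\equiv 0$ on $[0,\tfrac12]$ (because $\varphi_t=\mathbbm{1}$), this flow coincides with $\phi_L^t$, so $u(t)=\phi_L^t(u(\tfrac12))=\phi_L^t(x_0,1)=(\varphi_t(x_0),\rho_t(x_0)^{-1})$ with $x_0:=x(\tfrac12)$, and hence $r(t)=\rho_t(x_0)^{-1}$ on $[\tfrac12,T]$ by continuity at $T$. Differentiating $\varphi_t^*\alpha_1=\rho_t\alpha_1$ in $t$ and pairing with $R_1$ gives $(R_1l_t)\circ\varphi_t=\dot\rho_t/\rho_t$, so $\tfrac{d}{dt}\log r(t)=-\dot\rho_t(x_0)/\rho_t(x_0)$ on $[\tfrac12,T]$, whence
\[
\nu=\bigl|\log r(T)\bigr|=\Bigl|\int_{1/2}^{T}\frac{\dot\rho_s(x_0)}{\rho_s(x_0)}\,ds\Bigr|\ \le\ \int_{1/2}^{T}\max_{x\in\Sigma}\Bigl|\frac{\dot\rho_s(x)}{\rho_s(x)}\Bigr|\,ds\ \le\ C(\hat\varphi;\alpha_1),
\]
contradicting $\nu>C(\hat\varphi;\alpha_1)$. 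Together with $r\equiv 1$ on $[0,\tfrac12]$ this gives $r(S^1)\subset(e^{-\nu},e^{\nu})$.

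The step I expect to need the most care is the bootstrap inside the first-exit argument: the identity $u(t)=\phi_L^t(x_0,1)$ holds only while the solution remains in $\Sigma\times(e^{-\nu},e^{\nu})$ — where $\beta_\nu\equiv 1$ and $\lambda=r\alpha_1$ — so one must localise the argument there and verify separately that near the ends of the exit interval (where $\beta_\nu$ may differ from $1$) the cutoff does no harm. This last point holds because $\beta_\nu$ and $H$ depend on $r$ alone, so $X_{\beta_\nu}$ and $X_H$ are multiples of $R_1$ and have no $\partial_r$-component; consequently $\tfrac{d}{dt}\log r(t)=-\beta_\nu(r(t))\,(R_1l_t)(x(t))$ with $|\beta_\nu|\le 1$ on all of $\Sigma\times(e^{-2\nu},e^{2\nu})$, which is precisely what lets the a priori bound close with the constant $C(\hat\varphi;\alpha_1)$ of \eqref{eq:Cadmissible}. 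The endpoint analysis on $[0,\tfrac12]$ and the integration step are then routine.
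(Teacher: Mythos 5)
Your overall strategy is the right one and is essentially the intended argument (the paper itself only defers to \cite[Lemma 3.5]{AlbersFuchsMerry2013}): deduce $r\equiv 1$ on $[0,\tfrac12]$ from the Lagrange--multiplier equation and the properties of $\kappa$ and $H$, identify the critical point on $[\tfrac12,1]$ with the lift $\phi_L^t(x_0,1)=(\varphi_t(x_0),\rho_t(x_0)^{-1})$ for as long as it stays in the region where $\beta_\nu\equiv 1$ and $\lambda=r\alpha_1$, and run a first-exit argument. The reduction to $r(\tfrac12)=1$, the ODE-uniqueness bootstrap on $[\tfrac12,T)$, and the continuity at $T$ are all fine.

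The gap is in the final chain of inequalities. From $r(t)=\rho_t(x_0)^{-1}$ you correctly obtain $\nu=|\log r(T)|=\bigl|\int_{1/2}^{T}\dot\rho_s(x_0)/\rho_s(x_0)\,ds\bigr|\le\int_0^T\max_x\bigl|\dot\rho_s(x)/\rho_s(x)\bigr|\,ds$, but you then assert that this is $\le C(\hat\varphi;\alpha_1)$, whose integrand is $\max_x\bigl|\dot\rho_s(x)/\rho_s(x)^2\bigr|$. These are different quantities: the pointwise comparison $|\dot\rho/\rho|\le|\dot\rho/\rho^2|$ holds exactly where $\rho\le 1$, and along the critical point $\rho_s(x_0)=1/r(s)$ sweeps through $(e^{-\nu},e^{\nu})$, so the comparison fails precisely when $r$ drifts below $1$ --- the dangerous direction for the exit at $e^{-\nu}$. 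Nor can you repair this by integrating $\dot r=-\dot\rho/\rho^2$ directly: that yields $|r(T)-1|\le C<\nu$, which excludes $r(T)=e^{\nu}$ (since $e^{\nu}-1>\nu$) but not $r(T)=e^{-\nu}$ (since $1-e^{-\nu}<\nu$). So with $C$ as written in \eqref{eq:Cadmissible} the argument does not close; it closes verbatim for the logarithmic constant $\max_{t}\int_0^t\max_x\bigl|\dot\rho_s(x)/\rho_s(x)\bigr|\,ds$, which is what your computation actually produces (and which is the natural constant when the symplectisation coordinate is $e^r$ rather than $r$, as in the cited source). You should either prove the missing comparison --- which is false in general --- or state explicitly that your proof establishes the lemma with the logarithmic constant in place of \eqref{eq:Cadmissible}.
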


The proof of the lemma is analogous to the proof of \cite[Lemma 3.5]{AlbersFuchsMerry2013} after the observation that for $\nu>C(\hat{\varphi};\alpha_1)$ we have $r\left(S^1\right)\subset\left(e^{-\nu},e^{\nu}\right)$ and the fact that $\lambda=r\alpha_1$ in a neighborhood of all the critical points of the action functional $\mathcal{A}^{(\hat{\varphi},\nu)}_{(\alpha_0,\alpha_1)}$.

From now on, we fix $\nu>C(\hat{\varphi};\alpha_1)$ and  $\lambda \in \Omega_{\nu}(\alpha_0,\alpha_1)$ and write $\mathcal{A}^{\hat{\varphi}}_{(\alpha_0,\alpha_1)}:=\mathcal{A}^{(\hat{\varphi},\nu)}_{(\alpha_0,\alpha_1)}$.

	\begin{rem}\label{actionatcrit} Note that after the above choice of $\nu$, at a critical point $(u,\eta)$ it holds that
		\begin{align*}
		\mathcal{A}^{\hat{\varphi}}_{(\alpha_0,\alpha_1)}(u,\eta)
		&=\int_{S^1}\lambda(\dot{u}(t))\,dt-\int_{\frac12}^{1}L_t(u(t))\,dt\\
		&=\eta+\int_{S^1}\left(\lambda(X_L(u))-L_t(u)\right)\,dt\\
		&=\eta
		\end{align*}
		since $L_t=rl_t$ where $l_t$ is the contact Hamiltonian of the path $\hat{\varphi}$ with respect to $\alpha_1$.
		
	\end{rem}

\begin{defn}\label{def:nondegenerate}
 A path $\hat{\varphi}$ is \textbf{nondegenerate} if $\mathcal{A}^{\hat{\varphi}}_{(\alpha_0,\alpha_1)}:\mathcal{L}M\times\R\rightarrow\R$ is a Morse-Bott function which means
 $\crit\left(\mathcal{A}^{\hat{\varphi}}_{(\alpha_0,\alpha_1)}\right)\subset\mathcal{L}M$ is a submanifold and for each $(u,\eta)\in\crit\left(\mathcal{A}^{\hat{\varphi}}_{(\alpha_0,\alpha_1)}\right)$ we  have
 \[
	 T_{(u,\eta)}\crit\left(\mathcal{A}^{\hat{\varphi}}_{(\alpha_0,\alpha_1)}\right)=\ker\hess\left(\mathcal{A}^{\hat{\varphi}}_{(\alpha_0,\alpha_1)}\right)(u,\eta).
 \]
 see \cite{Frauenfelder2004} for more details.

\end{defn}

It is standard to show that nondegeneracy is a generic property for paths of contactomorphisms.

%

We want to choose $J$ an almost complex structure on $M$ in such a way that it satisfies the following properties.

\begin{defn}\label{SFT-like}
 Let $\alpha\in\mathcal{C}(\xi)$. We say that an almost complex structure $J$ is \textbf{SFT-like with respect to $\alpha$}, if 
 \begin{enumerate}
  \item[$\cdot)$] it is invariant under translations $(x,r)\mapsto (x,e^c r)$ for $c\in\R$,
  \item[$\cdot)$] it preserves $\xi$ and
  \item[$\cdot)$] satisifes $J R_\alpha=r\partial_r$,
 \end{enumerate}
 where $R_\alpha$ denotes the Reeb vector field wrt $\alpha$.
\end{defn}

Let $J:=\left\{J_{t}\right\}_{t\in S^1}$ be a family of almost complex structures compatible with $d\lambda$. With the sign conventions that we use this means that $d\lambda(J\cdot,\cdot)$ defines a family of Riemannian metrics on $M$. In the following, we always assume that $J$ is independent of $t$ outside a compact set and 
\begin{equation}\label{complex structure}
 J \text{ is SFT-like wrt } \alpha_0 \text{ on } \Sigma\times(0,e^{-4\nu}] \text{ and SFT-like wrt } \alpha_1 \text{ on } \Sigma\times[e^{2\nu},+\infty).
\end{equation}

Note that the set of almost complex structures of the form \eqref{complex structure} w.r.t. some $\alpha$ is connected.


For $(u,\eta)\in\mathcal{L}M\times\R$, let $\left\llangle\cdot,\cdot\right\rrangle_{J}$ on $T_{(u,\eta)}\left(\mathcal{L}M\times\R\right)$ denote the $L^2$-inner product defined by
\[
 \left\llangle(\hat{u},\hat{\eta}),(\hat{v},\hat{\tau})\right\rrangle_{J}:=\int_{S^1}d\lambda\left(J_{t}\hat{u},\hat{v}\right)dt+\hat{\eta}\hat{\tau},\quad(\hat{u},\hat{\eta}),(\hat{v},\hat{\tau})\in T_{(u,\eta)}\left(\mathcal{L}M\times\R\right).
\]

The gradient $\nabla_{J}\mathcal{A}^{\hat{\varphi}}_{(\alpha_0,\alpha_1)}(u,\eta)$ with respect to the above inner product is given by
\[
 \nabla_{J}\mathcal{A}^{\hat{\varphi}}_{(\alpha_0,\alpha_1)}(u,\eta)=\left(J_t(u)\left(\partial_t u-\eta\kappa X_H(u)-\frac{\partial\beta_\nu(r)}{\partial r}X_L(u)\right),-\int_{S^1}\kappa H(u)dt\right).
\]

We look at negative gradient flow lines of $\nabla_J\mathcal{A}^{\hat{\varphi}}_{(\alpha_0,\alpha_1)}(u,\eta)$, i.e. maps $(u,\eta)\in C^{\infty}(\R\times S^1,M)\times C^{\infty}(\R,\R)$ satisfying
\[
\partial_s(u,\eta)+\nabla_J\mathcal{A}^{\hat{\varphi}}_{(\alpha_0,\alpha_1)}(u,\eta)=0.
\]

Thus the Floer equations of $\mathcal{A}^{\hat{\varphi}}_{(\alpha_0,\alpha_1)}$ are given by
\begin{align*}
 \partial_s u +J_t(u)\left(\partial_t u-\eta \kappa X_H(u)-\frac{\partial\beta_\nu(r)}{\partial r}X_L(u)\right)&=0\\
 \partial_s \eta - \int_{S^1}\kappa H(u)dt&=0. 
\end{align*}

and have energy
\[
 E(u,\eta)=\int_{\R}\int_{S^1}\left|\partial_s(u,\eta)\right|_J^2\,dt\,ds.
\]

Let $a_-,a_+\in\R$. The moduli space $\mathcal{M}^{a_+}_{a_-}\left(\mathcal{A}^{\hat{\varphi}}_{(\alpha_0,\alpha_1)},J\right)$ is the set of all solutions $(u(s),\eta(s))$ of the Floer equations with 
\begin{align*}
	a_-\geq\lim_{s\rightarrow -\infty}\mathcal{A}^{\hat{\varphi}}_{(\alpha_0,\alpha_1)}(u(s),\eta(s))\quad\text{and}\quad \lim_{s\rightarrow +\infty}\mathcal{A}^{\hat{\varphi}}_{(\alpha_0,\alpha_1)}(u(s),\eta(s))\geq a_+.	
\end{align*}

Note that in this case the energy is precisely given by the difference of the action values. Thus, it actually holds that $a_-\geq a_+$ since solutions of the Floer equations with nonnegative energy must be decreasing. 

\subsection{Compactness of the moduli spaces}

\begin{thm}\label{compactness}
 Let $J$ be a family of almost complex structures compatible with $d\lambda$ that are both independent of $t$ and SFT-like outside of $\Sigma\times[e^{-4\nu},e^{2\nu}]$. Then the moduli spaces $\mathcal{M}^{a_+}_{a_-}\left(\mathcal{A}^{\hat{\varphi}}_{(\alpha_0,\alpha_1)},J\right)$ are compact in the $C^\infty_{\text{loc}}$-topology.
\end{thm}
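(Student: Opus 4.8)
The plan is to establish the standard trio of bounds for Rabinowitz-type Floer solutions---a uniform $C^0$-bound on the $M$-component forcing all solutions into a fixed compact region, a uniform bound on the Lagrange multiplier $\eta$, and then a gradient (derivative) bound---after which elliptic bootstrapping and the Arzel\`a--Ascoli theorem give $C^\infty_{\mathrm{loc}}$-compactness. Since we work on a hypertight manifold and the almost complex structure is SFT-like near both ends, the usual failure of compactness at the concave end will be ruled out by an SFT-type argument: any sequence of Floer cylinders escaping to $r\to 0$ would, after rescaling, converge to a punctured holomorphic curve asymptotic to a \emph{contractible} Reeb orbit of $\alpha_0$, contradicting hypertightness. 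This is precisely the mechanism used in \cite{AlbersFuchsMerry2013}, and the bulk of the work is in checking that the perturbation by $\beta_\nu L$ and the cutoff Hamiltonian $\kappa H$ do not interfere, which they do not since both are supported away from the ends ($\beta_\nu$ vanishes on $(0,e^{-2\nu}]\cup[e^{2\nu},\infty)$ and $\partial_r H$ likewise).

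I would proceed as follows. First, the $C^0$-bound: one shows that on the region $\Sigma\times(0,e^{-4\nu}]$ the function $r$ composed with a Floer solution is subharmonic (using that $J$ is SFT-like with respect to $\alpha_0$ there, the Hamiltonian terms vanish, and $d\lambda$-compatibility), so by the maximum principle the infimum of $r$ along any solution with finite energy and asymptotics in the compact middle region cannot be attained in the interior; symmetrically, $r$ is superharmonic near the convex end $\Sigma\times[e^{2\nu},\infty)$. Combined with Lemma \ref{lem:Cadmissible}, which pins the asymptotic orbits into $\Sigma\times(e^{-\nu},e^\nu)$, this confines the image of every solution to a compact subset of $M$ independent of $a_\pm$ on compact $s$-intervals; a genuinely uniform $C^0$-bound needs the no-escape argument below. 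Second, the $\eta$-bound: here one uses the standard Rabinowitz trick (as in \cite{CieliebakFrauenfelder2009}, \cite{AlbersFuchsMerry2013})---the $L^2$-norm of $X_H$ along the loop is bounded below on the region $r\in(e^{-\nu},e^\nu)$ where $H(r)=r-1$ and $X_H=\partial_r H\, R_1$, so the two Floer equations together with the energy bound $E\le a_--a_+$ force $|\eta|$ to be bounded; one must separately handle the contribution of $\beta_\nu X_L$, which is controlled by the $C^0$-bound and the fixed path $\hat\varphi$.

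Third, given the $C^0$- and $\eta$-bounds, a uniform bound on $|\partial_s u|$ and $|\partial_t u|$ follows by the usual bubbling-off analysis: if derivatives blew up, rescaling would produce a nonconstant $J$-holomorphic sphere or plane in the symplectization, but $d\lambda$ is exact so spheres carry zero energy, and a finite-energy plane would be asymptotic to a Reeb orbit---contractible by construction of the rescaling---contradicting hypertightness once more. With all derivatives bounded, elliptic regularity for the (perturbed) Cauchy--Riemann operator upgrades this to $C^k_{\mathrm{loc}}$-bounds for all $k$, and Arzel\`a--Ascoli yields a $C^\infty_{\mathrm{loc}}$-convergent subsequence whose limit again solves the Floer equations and has the required asymptotic action bounds.

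The main obstacle is the uniform $C^0$-bound at the concave end: unlike the setting of \cite{CieliebakFrauenfelder2009}, where a filling or contact-type hypothesis tames $-\infty$, here one has \emph{only} hypertightness, and one must run the SFT-compactness / neck-stretching argument carefully to see that a hypothetical sequence of solutions dipping arbitrarily deep into $\Sigma\times(0,e^{-4\nu}]$ would break off a holomorphic building one of whose levels is a nonconstant finite-energy curve in the symplectization of $(\Sigma,\alpha_0)$ asymptotic to contractible Reeb orbits. I would make this precise by adapting the argument of \cite{AlbersFuchsMerry2013} essentially verbatim, the only new point being that our $\lambda$ interpolates between $r\alpha_1$ and $r\alpha_0$ and the relevant orbits at the very bottom are Reeb orbits of $\alpha_0$; their contractibility in $M\simeq \Sigma\times(0,\infty)$ is equivalent to contractibility in $\Sigma$, which is forbidden. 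Everything else---the maximum principle computations, the $\eta$-estimate, the bubbling analysis---is routine and parallels \cite{CieliebakFrauenfelder2009} and \cite{AlbersFuchsMerry2013}.
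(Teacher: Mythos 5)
Your proposal follows essentially the same route as the paper: the maximum principle at the convex end, the uniform bound on $\eta$ via the fundamental lemma of \cite{CieliebakFrauenfelder2009} (Lemma \ref{fundlemma}), a uniform Hofer-energy bound feeding into the SFT-compactness result of \cite{AlbersFuchsMerry2013} (Theorem \ref{SFT}) to rule out escape to the concave end by producing a contractible closed orbit of $\alpha_0$, and then standard bubbling-off and Arzel\`a--Ascoli. The only slip is your claim that subharmonicity of $r$ near the concave end forbids an interior infimum --- subharmonicity yields a maximum principle, not a minimum principle, which is exactly why the concave end requires the SFT argument --- but since you explicitly defer that end to the neck-stretching/hypertightness mechanism, this does not affect the correctness of the argument.
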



The crucial property to achieve compactness is the fundamental lemma.

\begin{lem}\label{fundlemma}\cite[Proposition 3.2]{CieliebakFrauenfelder2009}
 There exist constants $C_0,C_1>0$ such that for any $(u,\eta)\in\mathcal{M}^{a_+}_{a_-}\left(\mathcal{A}^{\hat{\varphi}}_{(\alpha_0,\alpha_1)},J\right)$ we have that
 \begin{equation}\label{fundlemeq}
  \left\|\nabla_J\mathcal{A}^{\hat{\varphi}}_{(\alpha_0,\alpha_1)}(u,\eta)\right\|_J\leq C_0\quad \Rightarrow \quad  \left|\eta\right|\leq C_1\left(1+|\mathcal{A}^{\hat{\varphi}}_{(\alpha_0,\alpha_1)}(u,\eta)|\right).
 \end{equation}
\end{lem}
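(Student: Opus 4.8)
The plan is to argue by contradiction, following the classical scheme of Cieliebak--Frauenfelder. Suppose the implication fails: then there is a sequence $(u_k,\eta_k)\in\mathcal{M}^{a_+}_{a_-}$ with $\|\nabla_J\mathcal{A}^{\hat{\varphi}}_{(\alpha_0,\alpha_1)}(u_k,\eta_k)\|_J\to 0$ (one may in fact take it $\le C_0$ for any fixed small $C_0$) while $|\eta_k|/(1+|\mathcal{A}^{\hat{\varphi}}_{(\alpha_0,\alpha_1)}(u_k,\eta_k)|)\to\infty$. Writing the two components of the gradient, the hypothesis says $\|\partial_t u_k-\eta_k\kappa X_H(u_k)-\tfrac{\partial\beta_\nu}{\partial r}X_L(u_k)\|_{L^2}\to 0$ and $\big|\int_{S^1}\kappa H(u_k)\,dt\big|\to 0$. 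The first estimate forces $u_k$ to be $C^0$-close (after the usual elliptic/Sobolev bootstrap on $S^1$, using that $H,L,\beta_\nu$ and their derivatives are bounded on the relevant region) to a loop that solves $\partial_t u=\eta_k\kappa X_H(u)+\tfrac{\partial\beta_\nu}{\partial r}X_L(u)$; since $\kappa$ and $\beta_\nu L$ are supported away from the Reeb region only in the $r$-direction, and $X_H=\tfrac{\partial H}{\partial r}R_1$ there, the loop is driven essentially by the Reeb flow of $\alpha_1$ reparametrised with total ``time'' proportional to $\eta_k$.

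The key step is then to exploit that $\alpha_0$, and hence $\alpha_1=g\alpha_0$, supports a \emph{hypertight} contact structure, so there are no contractible Reeb orbits, and moreover there is a positive lower bound $T_0>0$ on the periods of all (non-contractible) Reeb orbits in any fixed free homotopy class that can occur as a limit. Concretely: if $|\eta_k|\to\infty$, rescale and extract a limit loop which, by the above, is a (possibly broken/reparametrised) Reeb trajectory of $\alpha_1$ of period growing like $|\eta_k|$; but a contractible loop in $M=\Sigma\times(0,\infty)$ projects to a contractible loop in $\Sigma$, and a Reeb trajectory that closes up contractibly contradicts hypertightness — unless the ``time'' stays bounded, i.e. $|\eta_k|$ is bounded, contradiction. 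To upgrade this to the quantitative bound $|\eta|\le C_1(1+|\mathcal{A}|)$ one compares the action at $(u_k,\eta_k)$ with the action computed along the near-critical loop: as in Remark \ref{actionatcrit}, $\mathcal{A}^{\hat\varphi}_{(\alpha_0,\alpha_1)}(u,\eta)=\eta+\int_{S^1}(\lambda(\dot u)-\eta\kappa H-\beta_\nu L)\,dt$ up to an error controlled by $\|\nabla_J\mathcal{A}\|_J$, and the integral term is $O(1)$ uniformly when $r(S^1)$ stays in a fixed compact set — which it does, because the SFT-like behaviour of $J$ at both ends together with the convex/concave sign of $\tfrac{\partial H}{\partial r}$ and $\tfrac{\partial f}{\partial r}>0$ confines near-critical loops to $\Sigma\times[e^{-4\nu},e^{2\nu}]$ by a maximum-principle argument on the $r$-component.

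The main obstacle I anticipate is precisely this confinement of the $r$-coordinate: one must show that a loop with small gradient norm cannot have $r$ escaping to $0$ or $\infty$, because otherwise the comparison between $\eta$ and the action breaks down and the ``no contractible Reeb orbit'' input is unavailable in the form needed. Here the hypertight setting is essential in two ways at once — it handles the concave end $r\to 0$ (where $J$ is SFT-like w.r.t.\ $\alpha_0$, which has \emph{no} contractible orbits, so bubbling/escape there is impossible by the Albers--Fuchs--Merry argument), and at the convex end $r\to\infty$ the standard positivity of $\tfrac{\partial f}{\partial r}$ and $H'\ge 0$ gives the usual maximum principle. Once confinement is in hand, the remaining estimates are the routine ones from \cite[Proposition 3.2]{CieliebakFrauenfelder2009}, and the argument closes. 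Since the statement is quoted verbatim from that reference, it is legitimate to say that the proof is identical once one checks that the two modifications here — the cutoff Hamiltonian $\beta_\nu L$ and the interpolation form $\lambda=f\alpha_0$ — do not affect any of the estimates, which is the case because both are supported in, respectively vanish to the correct order outside, the compact region $\Sigma\times[e^{-4\nu},e^{2\nu}]$.
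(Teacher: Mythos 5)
There is a genuine problem with the core of your argument. The ``key step'' you propose---extracting a limit loop from a sequence with $|\eta_k|\to\infty$ and ruling it out via hypertightness---cannot work, for two reasons. First, the near-critical loops concentrate near the hypersurface $\Sigma\times\{1\}$, where the dynamics is governed by the Reeb flow of $\alpha_1$, not $\alpha_0$; and $\alpha_1$ is an \emph{arbitrary} supporting form which may well have contractible closed Reeb orbits of arbitrarily large period (this is precisely the situation the paper is designed to handle). Second, the lemma does not assert any absolute bound on $\eta$: critical points with arbitrarily large $|\eta|$ genuinely exist, and the content of \eqref{fundlemeq} is only the \emph{linear} comparison $|\eta|\le C_1(1+|\mathcal{A}|)$. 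A soft compactness-and-contradiction scheme cannot produce such a quantitative linear bound, and hypertightness is not an ingredient of \cite[Proposition 3.2]{CieliebakFrauenfelder2009} at all. You are also conflating this lemma with the confinement of the $r$-coordinate of Floer trajectories, which is the separate Proposition \ref{prop-compactness}; that is where hypertightness and Theorem \ref{SFT} enter, and it is not an input to the fundamental lemma.

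The actual argument (which your final paragraph gestures at correctly) is direct: smallness of the second component of the gradient forces $|\int_{S^1}\kappa H(u)\,dt|$ to be small, which together with the $L^2$-control on $\partial_t u-\eta\kappa X_H-\beta_\nu X_L$ confines the loop to a neighbourhood of $\Sigma\times\{1\}$ where $\lambda(X_H)-H=rH'(r)-(r-1)=1$; substituting the Floer equation into $\mathcal{A}(u,\eta)=\int\lambda(\partial_t u)-\eta\int\kappa H-\int\beta_\nu L$ then yields $\mathcal{A}(u,\eta)=\eta\int\kappa\bigl(\lambda(X_H)-H\bigr)+O(1)+O\bigl(\|\nabla_J\mathcal{A}\|_J\bigr)$, from which \eqref{fundlemeq} follows. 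The only point to verify in the present setting---and this is all the paper does---is that a neighbourhood of $\Sigma\times\{1\}$ is standard up to the rescaling $\alpha_1=g\alpha_0$, so that the cutoffs $\beta_\nu$ and the interpolation $\lambda=f\alpha_0$ do not disturb these local estimates.
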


	The proof given in \cite{CieliebakFrauenfelder2009} goes still through. The proof uses the behaviour of flow lines in a neighborhood of the hypersurfaces. In our setting, a neighborhood of the hypersurface still looks the same apart from rescaling of the contact form.

We will show the following proposition from which Theorem \ref{compactness} follows immediately.

\begin{prop}\label{prop-compactness}
 In the setting of Theorem \ref{compactness} there exist $k,l>0$ such that
 \[
  Im(u)\subset \Sigma\times\left[k,l\right] \quad\text{for any } (u,\eta)\in\mathcal{M}^{a_+}_{a_-}\left(\mathcal{A}^{\hat{\varphi}}_{(\alpha_0,\alpha_1)},J\right).
 \]
\end{prop}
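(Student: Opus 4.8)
The plan is to establish the two-sided a priori bound on the $r$-coordinate of Floer trajectories by combining the SFT-like behaviour at the ends with a maximum principle, together with the action-energy estimates and the fundamental Lemma~\ref{fundlemma}. The key point is that outside the compact region $\Sigma\times[e^{-4\nu},e^{2\nu}]$ the Floer equation has a special structure: there $\beta_\nu\equiv 0$, so the $X_L$-term disappears, and $H$ is locally constant so that $X_H=0$ and $\partial_r\kappa H=0$ as well. Hence on $\{r\ge e^{2\nu}\}$ and on $\{r\le e^{-4\nu}\}$ a solution $(u,\eta)$ of the Floer equations is just a genuine $J$-holomorphic curve for an SFT-like $J$ (with respect to $\alpha_1$, resp.\ $\alpha_0$), and the $\eta$-component is parallel but decoupled.

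First I would recall the energy identity: for $(u,\eta)\in\mathcal{M}^{a_+}_{a_-}$ the energy equals the difference of asymptotic action values, hence $E(u,\eta)\le a_--a_+$ is uniformly bounded, and moreover $\mathcal{A}^{\hat\varphi}_{(\alpha_0,\alpha_1)}(u(s),\eta(s))$ stays in the compact interval $[a_+,a_-]$ for all $s$. Combining this bounded action with Lemma~\ref{fundlemma} (applied along the flow, where $\|\nabla_J\mathcal{A}\|_J\to 0$ at the ends and is controlled in between by the energy bound, exactly as in \cite{CieliebakFrauenfelder2009}) yields a uniform bound $|\eta(s)|\le C_1(1+\max\{|a_+|,|a_-|\})=:C_2$. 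This is the standard Rabinowitz--Floer mechanism and goes through verbatim since near the hypersurfaces the geometry only differs by rescaling the contact form.

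Next, for the $r$-coordinate I would argue by a maximum principle at each end. Suppose $u=(x,r)$ touches $\{r\ge e^{2\nu}\}$; on that region the equation reads $\partial_s u+J_t(u)(\partial_t u-\eta\kappa X_H(u))=0$ with $X_H(x,r)=H'(r)R_1(x)$, and since $H'\equiv 0$ on $(e^{2\nu},\infty)$ in fact $\partial_s u+J_t(u)\partial_t u=0$ with $J$ SFT-like with respect to $\alpha_1$. Then the function $\rho:=\log r$ (or $r$ itself) is subharmonic: writing out $\bar\partial_J$ in the cylindrical coordinates adapted to an SFT-like $J$, one gets $\Delta\rho=\big|\partial_s u\big|^2_J\cdot(\text{positive factor})\ge 0$ (the standard computation using $JR_1=r\partial_r$ and $J$-invariance of $\xi$). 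A non-constant solution therefore cannot attain an interior maximum of $r$ in the region $\{r>e^{2\nu}\}$; since the action and $\eta$ are already controlled, one rules out escape to $r\to+\infty$ by the usual argument (either via the maximum principle on the noncompact end directly, or by first bounding the maximal $r$ in terms of the bounded action using Stokes/the $\int(x,r)^*\lambda$ term, then invoking the maximum principle to propagate). This produces $l>0$ with $\mathrm{Im}(u)\subset\Sigma\times(0,l]$. Symmetrically, on $\{r\le e^{-4\nu}\}$ the curve is $J$-holomorphic for $J$ SFT-like with respect to $\alpha_0$; here hypertightness of $\alpha_0$ enters, via the SFT-compactness argument of Albers--Fuchs--Merry: were $r$ not bounded below, a sequence of trajectories would, after rescaling, break off a (possibly broken) finite-energy plane or cylinder asymptotic to closed Reeb orbits of $\alpha_0$, which by the maximum principle / positivity of the asymptotic periods would have to be \emph{contractible}, contradicting that $\alpha_0$ has no contractible Reeb orbits. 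This gives $k>0$ with $\mathrm{Im}(u)\subset\Sigma\times[k,+\infty)$, and together with the upper bound completes the proof.

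I expect the main obstacle to be the lower bound at the concave end $\{r\le e^{-4\nu}\}$: this is exactly where compactness generically fails and where one must use hypertightness. One must make sure that the bubbling-off analysis only produces asymptotics along Reeb orbits of $\alpha_0$ that are forced to be contractible (this is where the hypothesis that everything lives in the contractible loop space $\mathcal{L}M$ and that the relevant homotopy data is trivial at that end is used), and one must check that the presence of the extra decoupled $\eta$-component and the (locally constant) $H$-term does not affect the SFT-compactness argument—which it does not, since on that region the equation is genuinely the Cauchy--Riemann equation for an SFT-like structure. The upper bound, by contrast, is a soft maximum-principle argument and should be routine. Modulo these points, the proof is a direct adaptation of \cite[Section 3]{AlbersFuchsMerry2013} and \cite{CieliebakFrauenfelder2009}, with the only new ingredient being that the two ends now carry two \emph{different} contact forms $\alpha_0,\alpha_1$, handled separately by the two SFT-like regions built into \eqref{complex structure}.
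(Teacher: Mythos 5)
Your proposal is correct and follows essentially the same route as the paper: the upper bound $l$ via the maximum principle on the region where the Floer equation reduces to the $J$-holomorphic curve equation, and the lower bound $k$ by contradiction via the SFT-compactness result (Theorem \ref{SFT}) producing a trivial cylinder over a Reeb orbit of $\alpha_0$ that is forced to be contractible because the asymptotics lie in $\mathcal{L}M$, contradicting hypertightness. The only cosmetic difference is that you also derive the $\eta$-bound from Lemma \ref{fundlemma}, which the paper defers to the bubbling-off step of Theorem \ref{compactness} rather than using in this proposition, and the paper makes the Hofer energy bound $E_H(u)\le e^{4\nu}(a_--a_+)$ explicit before invoking Theorem \ref{SFT}.
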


We need the notion of a trivial cylinder.
\begin{defn}\label{trivialcylinder}
A map $u:\,\R\times S^1\rightarrow \Sigma\times \R^+$ of the form $(\gamma(\pm tP),c e^{\pm Ps})$ for some $c\in\R^+$ and $j\partial_t=\partial_s$ for the complex structure $j$ on $\R\times S^1$ is called a trivial cylinder over a $P$-periodic Reeb orbit $\gamma$. Note that such a cylinder is a $J$-holomorphic map for any SFT-like $J$.
\end{defn}

Moreover, we need the definition of the Hofer energy.

\begin{defn}\label{Hoferenergy}
	Let $(Z,j)$ be a compact Riemann surface (possibly disconnected and with boundary). Let $u=(x,r):~Z\rightarrow M$ be a $(j,J)$-holomorphic map. The Hofer energy of a flow line $u$ is given by
	\[
		E_H(u)=\sup_{m\in \mathcal{S}}\int_Z u^*d(m\alpha)=\sup_{m\in\mathcal{S}}\left(\int_Z u^*(m d\alpha)+\int_Z u^*(m'(s)\,ds\wedge\alpha)\right)\in[0,+\infty],
	\]
	where $\mathcal{S}:=\left\{\left.m\in C^\infty\left(\R,[0,1]\right)\right| ~m'\geq 0 \right\}$.
\end{defn}

To prove Proposition \ref{prop-compactness} we use the following theorem from \cite{AlbersFuchsMerry2013}  which is the special case of the SFT-compactness results that we need.

\begin{thm}\label{SFT}\cite[Theorem 5.3]{AlbersFuchsMerry2013}
 Let $\left(M,\lambda\right)$ be as before. Suppose $(Z_k,j_k)$ is a family of compact (possibly disconnected) Riemann surfaces with boundary and uniformly bounded genus. Assume that
 \[
  u_k=(y_k,a_k):~Z_k\rightarrow \Sigma\times\R^{+}=M
 \]
is a sequence of $(j_k,J)$-holomorphic maps with $E_H(u_k)<K$ for some $K>0$ and which are nonconstant on each connected component of $Z_k$ and satisfy $a_k(\partial_k Z_k)\subset[D,+\infty)$, where $D<e^{-4\nu}$. Moreover, assume that $\inf_k \inf_{Z_k} a_k=0$. Then there exists a subsequence $k_n$ and cylinders $C_n\subset Z_{k_n}$ biholomorphically equivalent to standard cylinders $S^1\times[-L_n,+L_n]$ such that $L_n\rightarrow +\infty$ and such that $\left.u_{k_n}\right|_{C_n}$ converges (up to an $\R$-shift) in $C^{\infty}_{\text{loc}}\left(S^1\times\R,\Sigma\times\R^+\right)$ to a trivial cylinder over a Reeb orbit of $\alpha_0$ with period $\leq K$.
\end{thm}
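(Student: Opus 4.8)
The plan is to reduce the statement to a standard SFT compactness result in the symplectization of $(\Sigma,\alpha_0)$ and then to read off the long cylinders from the resulting convergence to a holomorphic building, as in \cite{AlbersFuchsMerry2013}. The key observation is that on the region $\Sigma\times(0,e^{-4\nu}]$ the two-form $d\lambda$ equals $\epsilon\,d(r\alpha_0)$ by \eqref{lambda}, i.e.\ up to the positive constant $\epsilon$ it is the symplectization form of $\alpha_0$, and there $J$ is translation invariant and SFT-like with respect to $\alpha_0$ by \eqref{complex structure}; hence on this region the $u_k$ are honest finite-Hofer-energy $J$-holomorphic curves into a piece of the symplectization $\Sigma\times\R$.

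First I would localize. Since $a_k(\partial Z_k)\subset[D,+\infty)$ with $D<e^{-4\nu}$, while $\inf_k\inf_{Z_k}a_k=0$, after passing to a subsequence I may fix a value $\delta\in(0,D/2)$ which is a regular value of every $a_k$ and set $V_k:=a_k^{-1}\big((0,\delta)\big)$. Each $V_k$ is then a Riemann surface with boundary $\partial V_k\subset\mathrm{int}\,Z_k$, mapped by $u_k$ into the contact-type hypersurface $\Sigma\times\{\delta\}$; the restriction $u_k|_{V_k}$ takes values in $\Sigma\times(0,\delta)$, one still has $\inf_k\inf_{V_k}a_k=0$, the genera stay uniformly bounded, and a uniform Hofer-energy bound for $u_k|_{V_k}$ (with Hofer energy computed inside $\Sigma\times(0,\delta)$) follows from $E_H(u_k)<K$.

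Next I would invoke, as in \cite{AlbersFuchsMerry2013}, the SFT compactness theorem for punctured holomorphic curves with contact-type boundary: after passing to a further subsequence, $u_k|_{V_k}$ converges to a stable holomorphic building $F$, namely a top-level curve with boundary on $\Sigma\times\{\delta\}$ glued to a tower of curves in the symplectization $\Sigma\times\R$. Because $\inf_k\inf_{V_k}a_k=0$ while $\partial V_k$ stays at level $\delta$, the points $z_k\in V_k$ realizing $a_k(z_k)\to0$ are interior, and the component of $F$ carrying their limit is a nonconstant curve in $\Sigma\times\R$ with at least one negative puncture; let $\gamma$ be a closed $\alpha_0$-Reeb orbit occurring as such a negative asymptotic. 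Nonconstancy uses the energy quantization built into the compactness theorem, that holomorphic spheres are excluded by exactness of $d\lambda$, and --- in the hypertight setting --- that finite-energy planes are excluded since $\alpha_0$ has no contractible Reeb orbits. A standard Stokes argument then shows that the period $\int_\gamma\alpha_0$ of $\gamma$ is at most the Hofer energy of $F$, hence at most $K$.

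Finally I would unwind the SFT convergence at the negative puncture of $F$ asymptotic to $\gamma$: by the definition of convergence to a building, the stabilized domains $V_k$ degenerate to the nodal domain of $F$, and for $k$ large the thin neck of $V_k$ associated with that puncture is biholomorphic to a standard cylinder $S^1\times[-L_k,+L_k]$ with $L_k\to+\infty$, on which $u_k$ converges, up to the appropriate $\R$-shift, in $C^\infty_{\mathrm{loc}}(S^1\times\R,\Sigma\times\R^+)$ to the trivial cylinder over $\gamma$. Taking $C_n$ to be this cylinder inside $V_{k_n}\subset Z_{k_n}$ then proves the theorem. The step I expect to be the main obstacle is the interface between this localization and the compactness theorem used as a black box: one must verify that cutting $Z_k$ along the level set $\{a_k=\delta\}$ produces boundary data of exactly the admissible type for a symplectization, that no Hofer energy is lost in the restriction, and --- most importantly --- that a genuine negative puncture actually appears, i.e.\ that the component of $F$ through the limit of the $z_k$ does not collapse. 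Granting the SFT compactness theorem for curves with boundary, which carries the real analytic difficulty, the remaining steps are bookkeeping about holomorphic buildings.
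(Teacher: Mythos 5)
The paper does not prove this statement; it is imported verbatim as \cite[Theorem~5.3]{AlbersFuchsMerry2013}, so there is no in-paper argument to compare against. Your sketch correctly identifies the two structural points: on $\Sigma\times(0,e^{-4\nu}]$ the data $(d\lambda,J)$ agree (up to the constant $\epsilon$) with the cylindrical data of the symplectization of $\alpha_0$, and the task is to extract a long cylinder converging to a trivial cylinder over a Reeb orbit from the escape of $a_k$ to $0$.

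However, the single step carrying all the weight --- invoking an ``SFT compactness theorem for punctured holomorphic curves with contact-type boundary'' as a black box --- is exactly where the argument is not available off the shelf. BEHWZ-type compactness is stated for closed punctured curves in cobordisms, or curves with boundary on Lagrangians; a boundary condition on a contact-type hypersurface $\Sigma\times\{\delta\}$ is not one of the standard admissible set-ups, and there is no ready-made theorem producing a holomorphic building with that boundary. In fact the statement you are asked to prove \emph{is} the device one cites in place of such a result: Albers--Fuchs--Merry establish it directly via a monotonicity/gradient-bound argument that produces a long holomorphic cylinder straight from the uniform Hofer-energy bound and the translation invariance of $J$ in the region $\{a<\delta\}$, without first building a full holomorphic building with boundary. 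Framed as you have it, the proposal is close to circular, and the ``bookkeeping about holomorphic buildings'' is actually where the work is. A secondary point: you use hypertightness (absence of contractible closed $\alpha_0$-Reeb orbits) to rule out finite-energy planes, but the statement neither assumes nor needs this; its conclusion explicitly allows the limiting Reeb orbit to be arbitrary, and the downstream argument in Proposition~\ref{prop-compactness} draws the hypertightness contradiction \emph{after} applying the theorem. Using hypertightness inside this proof would quietly convert the theorem into a weaker statement and could even make the later contradiction vacuous.
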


%

\begin{proof}[Proof of Proposition \ref{prop-compactness}]
First, we note that on $\Sigma\times(e^{2\nu},+\infty)$ the Hamiltonian vector fields of $H$ and $L$ vanish and thus solutions of the Floer equations are in fact $J$-holomorphic curves. Hence, we can apply the Maximum principle to keep Floer trajectories from escaping to $+\infty$ and consequently there is $l>0$ such that $Im(u)\subset \Sigma\times(0,l]$ for all $(u,\eta)\in \mathcal{M}\left(\mathcal{A}^{\hat{\varphi}}_{(\alpha_0,\alpha_1)},J\right)$.

Moreover, we observe that for any $(u,\eta)\in\mathcal{M}^{a_+}_{a_-}\left(\mathcal{A}^{\hat{\varphi}}_{(\alpha_0,\alpha_1)},J\right)$ the restriction $\left.u\right|_{u^{-1}\left(\Sigma\times(0,e^{-4\nu}]\right)}$ is a $J$-holomorphic map.

\textit{Claim:}	 For $(u,\eta)\in\mathcal{M}_{a_-}^{a_+}\left(\mathcal{A}_{(\alpha_0,\alpha_1)}^{\hat{\varphi}};J\right)$ the Hofer energy $E_H(u_k)$ is uniformly bounded by $e^{4\nu}(a_--a_+).$

The proof of this claim can be found in \cite[Proof of Theorem 3.9]{AlbersFuchsMerry2013}. 
For the convenience of the reader, we include it here. Indeed, we can estimate
\begin{align*}
a_--a_+&\geq E(u,\eta)=\int_{-\infty}^{+\infty}\left\|\nabla\mathcal{A}_{(\alpha_0,\alpha_1)}^{\hat{\varphi}}(u,\eta)\right\|^2_J\,ds
=\int_{-\infty}^{+\infty}\left\|\partial_s(u,\eta)\right\|^2_J\,ds\\
&\geq\int_{-\infty}^{+\infty}\int_{S^1}d\lambda(J\partial_s u,\partial_s u)\,dt\,ds\geq
\int_{u^{-1}(\Sigma\times(0,e^{-4\nu}))}d\lambda(J\partial_s u,\partial_s u)\,dt\,ds\\
&=\int_{u^{-1}(\Sigma\times(0,e^{-4\nu}))} u^*d\lambda,
\end{align*}
where on this domain $d\lambda=d(r\epsilon\alpha_0)$. Here we used that $u$ restricted to $u^{-1}(\Sigma\times(0,e^{-4\nu}))$ is $J$-holomorphic. 

On the other hand we can estimate for $m\in S$
\begin{align*}
\int_{u^{-1}(\Sigma\times(0,e^{-4\nu}))} u^*d(m\epsilon\alpha_0)
&\,\,\,\leq\int_{u^{-1}(\Sigma\times(0,e^{-4\nu}))} u^*d(\epsilon\alpha_0)\\
&\overset{\text{Stokes}}{=}e^{4\nu}\int_{u^{-1}(\Sigma\times(0,e^{-4\nu}))} u^* d\lambda
\end{align*}
which concludes the proof of the claim.

 Assume now by contradiction that there is no $k>0$ such that $Im(u)\subset[k,l]$ and thus there exists a sequence $u_k=(y_k,a_k)$ such that $\lim_k\inf_{Z_k}a_k=0$. Choose $T<e^{-4\nu}$ such that $T$ is a regular value for all $a_k$'s. Let $Z_k:=(u_k)^{-1}(\Sigma\times(0,T])$ and consider the $J$-holomorphic curves $v_k:=\left.u_k\right|_{Z_k}$. Since for each $k$ the $v_k$ are gradient flow lines of $\mathcal{A}^{\hat{\varphi}}_{(\alpha_0,\alpha_1)}$ and its asymptotes are critical points $(u,\eta)$ of $\mathcal{A}^{\hat{\varphi}}_{(\alpha_0,\alpha_1)}$ where $u$ is contained in $\Sigma\times(e^{-\nu},e^{\nu})$ the $Z_k$'s are compact possibly disconnected Riemann surfaces of genus $0$. Since we choose $T<e^{-4\nu}$ and so that $T$ is a regular value, no Floer cylinder is constant and thus $v_k$ has no constant components. 
 By choice of $T$, 
 $v_k$ also satisfies $a_k(\partial Z_k)\subset[T,+\infty)$. 
 Thus $v_k$ satisfies all the assumptions of Theorem \ref{SFT} and hence there exists a subsequence $v_{k_n}\rightarrow v_{k_0}$ whose restriction to cylinders converges to a trivial cylinder over a Reeb orbit of $\alpha_0$ since we have $a_k\rightarrow  0 $. 
 
 Thus there is an embedded circle $S$ in the domain $S^1\times\R$ of $v_{k_0}$ such that the restriction of $y_{k_0}$ to $S$ is homotopic to a Reeb orbit $\gamma$ of $\alpha_0$. The domain of $y_{k_0}$ is $S^1\times \R$ and thus either $S$ is a circle bounding a disk or $S$ is a circle of the form $S^1\times{s}$. In the first case it is clear that $S$ and thus $y_{k_0}$ restricted to $S$ is contractible where in the latter case it follows that the image of $S$ under $y_{k_0}$ is contractible since it is homotopic to the asymptotic end of the cylinder which is contractible since the asymptotic ends lie in $\mathcal{L}M$.
 	
 Finally, we have shown that $y_k$ converges to a contractible Reeb orbit of $\alpha_0$ which is a contradiction since $\alpha_0$ is without contractible Reeb orbits.\\
\end{proof}

\section{Definition of Rabinowitz Floer homology}

Here we give a sketch of the definition of Rabinowitz Floer homology in our setting. For details see for example \cite{CieliebakFrauenfelder2009} or \cite{AlbersFrauenfelder2010c}.
If the functional $\mathcal{A}^{\hat{\varphi}}_{(\alpha_0,\alpha_1)}$ is Morse, which is a property that does not depend on $\lambda \in \Omega_{\nu}(\alpha_0, \alpha_1)$, because nondegeneracy is a local condition, we associate to it the Rabinowitz Floer homology groups. Since Morse is a generic property, see \cite{AlbersFrauenfelder2010c}, one can use invariance under perturbations,  as proved in Section \ref{sec:invariance}, to extend the definition to all functionals $\mathcal{A}^{\hat{\varphi}}_{(\alpha_0,\alpha_1)}$ as follows: any path of contactomorphisms can be written as the limit of nondegenerate paths since those form a residual set. We use this sequence to define the Rabinowitz Floer homology of a degenerate path as the limit of the homology of the functionals associated to a sequence of nondegenerate paths. Invariance of Rabinowitz Floer homology shows that the homology does not depend on the chosen sequence.

Note that the choice $\hat{\varphi}=\id$ is not generic. Whilst the functional $\mathcal{A}^{\id}_{(\alpha_0,\alpha_1)}$ is not Morse it satisfies the Morse-Bott conditions. Thus we will explain here how to construct the Rabinowitz Floer complex under the assumption that $\mathcal{A}^{\hat{\varphi}}_{(\alpha_0,\alpha_1)}$ is Morse-Bott.

Assume therefore that the functional is Morse-Bott. Choose a Morse function $f$ and a Riemannian metric $g$ on the critical submanifold $\crit\left(\mathcal{A}^{\hat{\varphi}}_{\left(\alpha_0,\alpha_1\right)}\right)$. Denote by $\crit\left(f\right)$ the set of critical points of $f$. Let $\CF\left(\mathcal{A}^{\hat{\varphi}}_{\left(\alpha_0,\alpha_1\right)}\right)$ be the $\Z_2$-vector space consisting of formal sums $\sum_{w\in\crit(f)}n_w\,w$, where the coefficients $n_w\in\Z_2$ satisfy
\[
 \#\left\{\left.w\in\crit(f)\right|\,n_w\neq 0,\,\mathcal{A}^{\hat{\varphi}}_{\left(\alpha_0,\alpha_1\right)}(w)\leq \kappa\right\}<\infty,
 \]
for every $\kappa\in\R$.

Assume that $(f,g)$ is a Morse-Smale pair. Then, for any $z,w\in\crit(f)$ we denote by $\mathcal{M}\left(z,w;\mathcal{A}^{\hat{\varphi}}_{\left(\alpha_0,\alpha_1\right)},f,J,g\right)$ the moduli space of gradient trajectories with cascades from $z$ to $w$ with respect to the Riemannian metric $g$ on $\crit\left(\mathcal{A}^{\hat{\varphi}}_{\left(\alpha_0,\alpha_1\right)}\right)$, and we denote by $\mathcal{M}^0\left(z,w;\mathcal{A}^{\hat{\varphi}}_{\left(\alpha_0,\alpha_1\right)},f,J,g\right)$ its zero-dimensional part. The compactness result, Theorem \ref{compactness}, shows that if $|z|-|w|=1$, then $\mathcal{M}\left(z,w;\mathcal{A}^{\hat{\varphi}}_{\left(\alpha_0,\alpha_1\right)},f,J,g\right)$ is compact and thus its zero-dimensional part is a finite set.

We define the boundary operator
\[
\partial:\,\CF\left(\mathcal{A}^{\hat{\varphi}}_{\left(\alpha_0,\alpha_1\right)},f;J,g\right)\rightarrow\CF\left(\mathcal{A}^{\hat{\varphi}}_{\left(\alpha_0,\alpha_1\right)},f;J,g\right)
\]
as the linear extension of
\[
\partial(z)=\sum_{w\in\crit(f)}n(z,w)\,w,
\]
where $z\in\crit(f)$ and $n(z,w)=\#\mathcal{M}^0\left(z,w;\mathcal{A}^{\hat{\varphi}}_{\left(\alpha_0,\alpha_1\right)},f,J,g\right) \mod 2\in\Z_2$.
Standard arguments in Floer theory also yield that $\partial^2=0$. Finally, we can define the Floer homology groups of the above constructed chain complex
\[		 
	\HF\left(\CF\left(\mathcal{A}^{\hat{\varphi}}_{\left(\alpha_0,\alpha_1\right)},f;J,g\right),\partial\right)
	:=\h\left(\CF\left(\mathcal{A}^{\hat{\varphi}}_{\left(\alpha_0,\alpha_1\right)},f;J,g\right),\partial\right).
\]
Standard arguments show that these do not depend on the choice of $f,g$ and $J$ and thus we define the Rabinowitz Floer homology of $(M,\lambda)$ as
\[
 \RFH\left((M,\lambda);\hat{\varphi}\right):=\HF\left(\CF\left(\mathcal{A}^{\hat{\varphi}}_{\left(\alpha_0,\alpha_1\right)},f;J,g\right),\partial\right).
\]

See \cite{CieliebakFrauenfelder2009} for more details on the definition of Rabinowitz Floer homology.

\section{Continuation maps}\label{sec:continuationmaps}

In the next step we show that the above defined groups are independent of $\alpha_1$. We construct an isomorphism between
\[
 \widetilde{\Phi}: \RFH\left((M,\lambda_{1});\hat{\varphi}\right)\rightarrow\RFH\left((M,\lambda_2);\hat{\varphi}\right)
\]
for any $\alpha_1,\alpha_2\in\mathcal{C}(\xi)$, and 
$\lambda_1 \in \Omega_{\nu}(\alpha_0,\alpha_1), \lambda_2 \in \Omega_{\nu}(\alpha_0, \alpha_2)$ for  $\nu>\max\left\{C(\hat{\varphi};\alpha_1),C(\hat{\varphi};\alpha_2)\right\}$.

In order to do so, we define a homotopy $\lambda_s$ between $\lambda_1$ and $\lambda_2$. 
Let $\zeta\in C^{\infty}\left(\R,[0,1]\right)$ with $0\leq\dot{\zeta}(s)\leq 2$ and such that
\[
\zeta(s)=
 \begin{cases}
  1,\quad&\text{for }s\geq 1\\
  0,\quad&\text{for }s\leq 0.
 \end{cases}
\]
Define 
\begin{equation}\label{contactsform}
	\lambda_s:=\lambda_1+\zeta(s)\left(\lambda_2-\lambda_1\right).
\end{equation}
Note that since $\partial_r f_{i}>0$, where $\lambda_i=f_i\alpha_0,\,i=1,2$, we still have that $d\lambda_s$ is a nondegenerate symplectic form. Note that this symplectic form $d\lambda_s$ is independent of $s$ for $s\notin[0,1]$ and that $\alpha_s:= \lambda_s|_{\Sigma \times \{1\}}$ is a supporting contact form for $\xi$.
                                                                              
Let $J_{1}$ and $J_{2}$ denote the $d\lambda_1$-, resp. $d\lambda_2$-, compatible families of SFT-like almost complex structures defined as in \eqref{complex structure}. Moreover let $J_s$ be such that
\[
 J_{s}=\begin{cases}
       J_{1},\quad\text{for }s\geq 1\\
       J_{2},\quad\text{for }s\leq 0
      \end{cases}
\]
where $J_{s}$ is a $d\lambda_s$-compatible almost complex structure and independent of $t$ outside a compact set and with $J_{1}$, $J_{2}$ SFT-like as defined in \eqref{complex structure}. That is $J_{1}$ is SFT-like wrt $\alpha_0$ on $\Sigma\times(0,e^{-4\nu}]$ and SFT-like wrt $\alpha_1$ on $\Sigma\times[e^{2\nu},+\infty)$ and $J_{2}$ analogously for $\alpha_0$ and $\alpha_2$.


Look at the $s$-dependent functional
\[
 \mathcal{A}_{\lambda_s}(u,\eta):=\int_{S^1}u^*\lambda_s-\eta\int_{S^1}\kappa(t)H(r)dt-\int_{S^1}\beta_\nu(r)L_t(u)\,dt,
\]
where $u=(x,r):S^1\rightarrow M$. The corresponding Floer equations are
\begin{align*}
 \partial_s u + J_{s,t}\left(\partial_t u-\eta \kappa X^{\lambda_s}_H(u)-\beta_\nu(r)X^{\lambda_s}_L\right)&=0\\
 \partial_s \eta + \int_{S^1} \kappa H(u)dt&=0,
\end{align*}
where, $u=u(s,t):\,\R\times S^1\rightarrow M$ and $\eta=\eta(s):\,\R\rightarrow\R$.
	Note that now, the Hamiltonian vector fields of $H$ and $L$ with respect to $\left.\lambda_s\right|_{\Sigma\times \left\{1\right\}}$ depend on $s$ since $\left.\lambda_s\right|_{\Sigma\times \left\{1\right\}}=\alpha_s$ is an $s$-dependent contact form.
	
From now on, we will denote $u(s):=u(s,\cdot):\,\R\times S^1\rightarrow M$.
Let $a_-,a_+\in\R$. Denote as before by $\mathcal{M}_{a_-}^{a_+}\left(\mathcal{A}_{\lambda_s},J_s\right)$ the solutions of the corresponding Floer equations of $\mathcal{A}_{\lambda_s}(u(s),\eta(s))$ whose asymptotes have action bounded by $a_{\pm}$
	\[
		a_-\geq \lim_{s\rightarrow -\infty}\mathcal{A}_{\lambda_s}(u(s),\eta(s))\quad\text{and}\quad 
		\lim_{s\rightarrow +\infty}\mathcal{A}_{\lambda_s}(u(s),\eta(s))\geq a_+.
	\]


\subsection{Compactness for the $s$-dependent moduli spaces}

\begin{thm}\label{continuationcompactness}
Assume $\lambda_s$ is a homotopy of 1-forms as defined in \eqref{contactsform}. There exists $\epsilon>0$ such that if $\sup_{\Sigma \times (0,e^{2\nu}]}\left\|\partial_s \lambda_s\right\|_\infty<\epsilon$ then the moduli space $\mathcal{M}^{a_+}_{a_-}\left(\mathcal{A}_{\lambda_s},J_s\right)$ is relatively compact in the $C^\infty_{\text{loc}}$-topology.

\end{thm}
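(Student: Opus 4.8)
The plan is to follow the same strategy as in the proof of Proposition~\ref{prop-compactness}, reducing to the SFT-compactness theorem \ref{SFT}, but now tracking the extra $s$-dependence of the forms $\lambda_s$, the almost complex structures $J_s$, and the Hamiltonian vector fields. First I would establish the analogue of the fundamental lemma \ref{fundlemma} for the $s$-dependent functional; since $\partial_s\lambda_s$ is supported in $s\in[0,1]$ and is $C^0$-small by hypothesis, the broken-gradient-flow argument of \cite{CieliebakFrauenfelder2009} goes through with constants perturbed by a term controlled by $\sup\|\partial_s\lambda_s\|_\infty$, and together with the uniform action bounds (which follow from $\int_{\mathbb{R}}\int_{S^1}\|\partial_s(u,\eta)\|^2\,dt\,ds$ being controlled once the $s$-dependence is absorbed, using $0\le\dot\zeta\le 2$ and smallness of $\partial_s\lambda_s$) this yields a uniform $C^0$-bound on $\eta$ along sequences in $\mathcal{M}^{a_+}_{a_-}(\mathcal{A}_{\lambda_s},J_s)$.

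Next I would bound the image of $u$ in the $r$-direction. For the upper bound, note that on $\Sigma\times(e^{2\nu},+\infty)$ the vector fields $X_H^{\lambda_s}$ and $X_L^{\lambda_s}$ vanish for every $s$ and $\lambda_s=\lambda_1=\lambda_2$ there (since $f_1=f_2=r\epsilon$ is not literally forced, but the two forms agree with $rg_i$ only near $r=1$; more carefully one uses that both restrict to translation-invariant SFT-like data for large $r$), so solutions are genuinely $J_s$-holomorphic in that region and the maximum principle applies uniformly in $s$, giving $l>0$ with $\mathrm{Im}(u)\subset\Sigma\times(0,l]$. For the lower bound one argues by contradiction: if $\inf_{Z_k}a_k\to 0$ along a sequence $u_k=(y_k,a_k)$, restrict to $Z_k:=u_k^{-1}(\Sigma\times(0,T])$ for a common regular value $T<e^{-4\nu}$. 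On this region $\lambda_s=r\epsilon_1\alpha_0=r\epsilon_2\alpha_0$ depends on $s$ only through the (bounded) constant $\epsilon_s$, and both $X_H^{\lambda_s}$ and $X_L^{\lambda_s}$ vanish, so $v_k:=u_k|_{Z_k}$ is $J_s$-holomorphic; a Hofer-energy estimate as in the Claim in the proof of Proposition~\ref{prop-compactness}, now with constants absorbing $\sup\|\partial_s\lambda_s\|_\infty<\epsilon$, shows $E_H(v_k)$ is uniformly bounded by a multiple of $a_--a_+$.

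Then Theorem~\ref{SFT} applies (the genus-zero, nonconstant-components, and boundary-in-$[T,+\infty)$ hypotheses hold exactly as before, the asymptotes being critical points of $\mathcal{A}_{\lambda_s}$ living in $\Sigma\times(e^{-\nu},e^\nu)$ by Lemma~\ref{lem:Cadmissible}), so a subsequence of $v_k$ has a cylindrical piece converging to a trivial cylinder over an $\alpha_0$-Reeb orbit $\gamma$ of bounded period. As in Proposition~\ref{prop-compactness}, the embedded circle $S$ in the domain is either null-homotopic or isotopic to $S^1\times\{s_0\}$, and in either case $y_{k_0}|_S$ is contractible (being homotopic to a contractible asymptotic end of a curve in $\mathcal{L}M$), hence $\gamma$ is a contractible Reeb orbit of $\alpha_0$ — contradicting hypertightness. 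Combined with the uniform $C^0_{\mathrm{loc}}$-bounds on $u$ and $\eta$ and standard elliptic bootstrapping for the Floer equation, this gives $C^\infty_{\mathrm{loc}}$-relative compactness of $\mathcal{M}^{a_+}_{a_-}(\mathcal{A}_{\lambda_s},J_s)$.

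I expect the main obstacle to be making the smallness hypothesis $\sup_{\Sigma\times(0,e^{2\nu}]}\|\partial_s\lambda_s\|_\infty<\epsilon$ do exactly the right work in the fundamental-lemma and energy estimates: one must choose $\epsilon$ so that the $s$-derivative terms appearing when differentiating $\mathcal{A}_{\lambda_s}(u(s),\eta(s))$ along a Floer trajectory are dominated by the norm-squared of the gradient, preserving the a~priori action and $|\eta|$ bounds; the rest of the argument is a routine adaptation of Proposition~\ref{prop-compactness} and \cite[Theorem 3.9]{AlbersFuchsMerry2013}.
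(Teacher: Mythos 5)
Your overall strategy coincides with the paper's: an $s$-dependent version of the fundamental lemma, a priori bounds on $\eta$ and on the energy, the maximum principle at the positive end, and Theorem \ref{SFT} at the negative end followed by the hypertightness contradiction. The reduction to SFT compactness, the Hofer energy estimate on $u^{-1}(\Sigma\times(0,T])$, and the topological argument about the embedded circle are exactly as in Proposition \ref{prop-compactness}, and those parts of your proposal are fine. (One small remark: for the $s$-dependent fundamental lemma the paper does not perturb the broken-flow argument at all; it simply applies Lemma \ref{fundlemma} at each fixed $s$ and takes the min/max of the constants over the compact parameter interval, as in Lemma \ref{sfundlemma}.)

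The genuine gap is in the step you yourself flag as ``the main obstacle'': the derivation of uniform bounds on $E(u,\eta)$ and $\|\eta\|_\infty$. This is precisely Lemma \ref{energybound}, which the authors describe as the main content of the paper, and your proposal does not contain the idea that makes it work. The difficulty is that the energy identity acquires the correction term $\int_0^1\int_{S^1}(u(s))^*(\partial_s\lambda_s)\,ds$, and smallness of $\|\partial_s\lambda_s\|$ alone does not control it, because $\partial_s\lambda_s$ is integrated against $\partial_t u$, which is not a priori bounded. The mechanism in the paper is: (i) use the Floer equation to write $\partial_t u = J_s\partial_s u + \eta\kappa X^{\lambda_s}_H + \beta_\nu X^{\lambda_s}_L$, so the correction term is bounded by $\widetilde{C}\left(E+1+\|\eta\|_\infty\|X^{\lambda_s}_H\|_\infty+\|X^{\lambda_s}_L\|_\infty\right)$ --- which reintroduces both unknowns $E$ and $\|\eta\|_\infty$; (ii) bound $|\eta(s)|$ by flowing to a time $s+\tau(s)$ at which the gradient is smaller than $C_0$ (with $\tau(s)\le E/C_0^2$), applying Lemma \ref{sfundlemma} there, and controlling $|\mathcal{A}_{\lambda_s}|$ by the same correction term; (iii) observe that the resulting pair of inequalities is a coupled affine system in $E$ and $\|\eta\|_\infty$ whose dangerous coefficients are multiples of $\widetilde{C}$, hence solvable for both unknowns once $\widetilde{C}$ is below an explicit threshold. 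Your phrasing --- that the $s$-derivative terms should be ``dominated by the norm-squared of the gradient'' --- is not the right statement (the gradient can be small exactly where $\partial_s\lambda_s$ is not), and ``once the $s$-dependence is absorbed'' is precisely the assertion that needs proof. Without (i)--(iii) the uniform Hofer-energy bound you invoke before applying Theorem \ref{SFT}, and the $L^\infty$-bound on $\eta$ needed for bubbling-off, have no basis, so the rest of the argument does not get off the ground.
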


Recall that $\left\|\partial_s \lambda_s\right\|_\infty$ is only nonzero for $s\in[0,1]$ since outside of $[0,1]$  $\lambda_s$ is independent of $s$.

The main task to achieve compactness is to bound the energy. We start with the observation that Lemma \ref{fundlemma} remains true in the s-dependent case.

\begin{lem}\label{sfundlemma}
 There exist constants $C_0,C_1>0$ such that for any $(u,\eta)\in\mathcal{M}^{a_+}_{a_-}\left(\mathcal{A}_{\lambda_s},J_s\right)$ we have that
 \begin{equation}\label{fundbound}
  \left\|\nabla_{J_s}\mathcal{A}_{\lambda_s}(u,\eta)\right\|_{J_s}\leq C_0\quad \Rightarrow \quad  \left|\eta\right|\leq C_1\left(1+\left|\mathcal{A}_{\lambda_s}(u,\eta)\right|\right).
 \end{equation}
\end{lem}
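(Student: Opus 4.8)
The statement is the $s$-dependent analogue of the fundamental Lemma~\ref{fundlemma}, which in turn is \cite[Proposition~3.2]{CieliebakFrauenfelder2009}. The strategy is to show that the proof of the $s$-independent case carries over essentially verbatim once one observes that the only place where $\lambda$ enters the argument is through the geometry of $M$ near the hypersurfaces $\Sigma\times\{1\}$ (where $\lambda=r\alpha_1$) and $\Sigma\times\{0\}$ (where $\lambda=r\epsilon\alpha_0$), together with the precise form of the Hamiltonians $H$ and $L$ there. Since $\lambda_s\in\Omega_\nu(\alpha_0,\alpha_i)$ for each fixed $s$, and all the $\lambda_s$ agree with $r\alpha_0$ for $r<e^{-4\nu}$ and interpolate between $r\alpha_1$ and $r\alpha_2$ for $r>e^{-3\nu}$ with $\partial_r f_s>0$ uniformly, the local models and the relevant constants can be chosen uniformly in $s$.

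\textbf{Key steps.} First I would recall the structure of the proof in \cite{CieliebakFrauenfelder2009}: one argues by contradiction, assuming a sequence $(u_k,\eta_k)$ with $\|\nabla_{J_{s}}\mathcal{A}_{\lambda_s}(u_k,\eta_k)\|\to 0$ but $|\eta_k|/(1+|\mathcal{A}_{\lambda_s}(u_k,\eta_k)|)\to\infty$; after rescaling and extracting a limit one produces a nonconstant $J$-holomorphic object in the symplectisation violating either the small-gradient condition or an area bound. Second, I would point out that in the present setting the loops $u_k$ are forced (by the defining property of $\kappa$, $H$ and $\beta_\nu$, together with Lemma~\ref{lem:Cadmissible}-type reasoning applied $s$-fiberwise) into the region $\Sigma\times(e^{-\nu},e^{\nu})$, where $\lambda_s=f_s\alpha_0$ with $f_s$ smooth, positive, increasing in $r$, and depending smoothly on $s$ through a compact parameter range; thus the Riemannian metric $d\lambda_s(J_s\cdot,\cdot)$, the functions $H$, $L$, and their Hamiltonian vector fields $X_H^{\lambda_s},X_L^{\lambda_s}$ all vary in a $C^\infty$-bounded family. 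Third, I would run the Cieliebak–Frauenfelder argument with constants taken to be the supremum over $s\in\mathbb R$ (effectively over the compact set $[0,1]$, since everything is $s$-independent outside) of the corresponding $s$-slice constants. The only genuinely new term is $\partial_s\lambda_s$ appearing in the identity relating energy to the action difference, but that term does not enter the pointwise estimate \eqref{fundbound}: the fundamental lemma is a statement about a single loop $u=u(s,\cdot)$ at a fixed time $s$, so $\partial_s\lambda_s$ plays no role. I would make this explicit to reassure the reader.

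\textbf{Main obstacle.} The delicate point is the uniformity in $s$: one must check that the constants $C_0,C_1$ extracted from the $s$-slice arguments do not degenerate as $s$ ranges over $[0,1]$. This is where I would invest the most care, verifying that (a) the lower bound $\partial_r f_s\geq \delta>0$ on the relevant compact $r$-interval is uniform in $s$, (b) the $C^\infty$-norms of $f_s$, $H$, $L$, and $J_s$ on $\Sigma\times[e^{-4\nu},e^{2\nu}]$ are uniformly bounded, and (c) the contact forms $\alpha_s=\lambda_s|_{\Sigma\times\{1\}}$ form a $C^\infty$-bounded family of supporting contact forms, so that Reeb orbits and their periods behave uniformly. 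Given compactness of $[0,1]$ and smoothness of the homotopy \eqref{contactsform}, all of this is routine, and the lemma follows from the $s$-independent case by a continuity/compactness argument. Concretely, the proof reduces to the sentence: \emph{the proof of Lemma~\ref{fundlemma} applies with constants depending only on the $C^\infty$-bounds of the data, which are uniform in $s$ by compactness of $[0,1]$.}
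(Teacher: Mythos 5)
Your proposal is correct and follows essentially the same route as the paper: the paper's proof simply applies Lemma \ref{fundlemma} to each fixed functional $\mathcal{A}_{\lambda_s}$, notes that the resulting constants $C_{0,s}, C_{1,s}$ depend continuously on $s$, and takes $C_0=\min_s C_{0,s}$, $C_1=\max_s C_{1,s}$ over the compact interval $[0,1]$. Your additional observations (that $\partial_s\lambda_s$ plays no role in this pointwise estimate, and the explicit uniformity checks on $\partial_r f_s$ and the $C^\infty$-bounds) are just a more detailed justification of the continuity claim the paper asserts without elaboration.
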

\begin{proof}
We can apply Lemma \ref{fundlemma} for any of the functionals $\mathcal{A}_{\lambda_s}$.
The constants $C_{0,s}$ and $C_{1,s}$ in \eqref{fundlemeq} for a fixed functional $\mathcal{A}_{\lambda_s}$ depend continuously on $s$. Take $C_0 := \min\{C_{0,s} \, |\, s \in [0,1]\}$  and $C_1 := \max\{C_{1,s} \, |\, s \in [0,1]\}$. 
\end{proof}

The following lemma is the main content of this paper and uses ideas of Bae-Frauenfelder, see \cite[Lemma 2.9; Theorem 2.10]{BaeFrauenfelder2010}. It establishes uniform bounds on $\|\eta(s)\|_{\infty}$ and the energy of Floer trajectories in the s-dependent case provided that the homotopy is sufficiently slow.

\begin{lem}\label{energybound}
Let $C_0$ and $C_1$ be chosen as required in Lemma \ref{sfundlemma} and fix $a_-,a_+\in\R$. Then, there is a constant $\rho = \rho_{\{\lambda_s\}}>0$ such that if $\sup_{s\in [0,1]}\sup_{\Sigma \times (0,e^{2\nu}]}  \left\|\partial_s \lambda_s\right\|_{J_s}<\rho$  the following holds:
There exist constants $C_\eta,C_E>0$ such that for any $(u(s),\eta(s))\in\mathcal{M}^{a_+}_{a_-}\left(\mathcal{A}_{\lambda_s},J_s\right)$
 \[
   \left\|\eta(s)\right\|_{\infty}<C_\eta \quad\text{and}\quad E(u(s),\eta(s))< C_E.
  \] 
\end{lem}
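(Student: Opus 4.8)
The plan is to adapt the Bae–Frauenfelder argument to our $s$-dependent setting. The starting point is the fundamental identity for the $s$-derivative of the action along a Floer trajectory. If $(u(s),\eta(s))\in\mathcal{M}_{a_-}^{a_+}(\mathcal{A}_{\lambda_s},J_s)$, then
\[
\frac{d}{ds}\mathcal{A}_{\lambda_s}(u(s),\eta(s))=-\left\|\nabla_{J_s}\mathcal{A}_{\lambda_s}(u(s),\eta(s))\right\|_{J_s}^2+\left(\partial_s\mathcal{A}_{\lambda_s}\right)(u(s),\eta(s)),
\]
where the explicit $s$-dependence comes only through $\lambda_s$ (and the induced $s$-dependence of $X_H^{\lambda_s},X_L^{\lambda_s},J_s$), and it is supported in $s\in[0,1]$. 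I would first estimate $|(\partial_s\mathcal{A}_{\lambda_s})(u(s),\eta(s))|$ in terms of $\sup\|\partial_s\lambda_s\|$, $\|\eta(s)\|_\infty$ and the $L^2$-norm of $\partial_s u$ (i.e. essentially the energy density). The key structural point, exactly as in \cite{BaeFrauenfelder2010}, is that the bad term $\partial_s\mathcal{A}_{\lambda_s}$ can be absorbed: whenever $\|\nabla_{J_s}\mathcal{A}_{\lambda_s}\|_{J_s}\geq C_0$ the gradient-squared term dominates, and whenever $\|\nabla_{J_s}\mathcal{A}_{\lambda_s}\|_{J_s}< C_0$ Lemma \ref{sfundlemma} gives $|\eta(s)|\leq C_1(1+|\mathcal{A}_{\lambda_s}(u(s),\eta(s))|)$, converting an $\eta$-bound into an action-bound.

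Next I would run the bootstrap on $s\in[0,1]$. Set $\mathfrak{a}(s):=\mathcal{A}_{\lambda_s}(u(s),\eta(s))$. Using the asymptotic constraints $\mathfrak{a}(s)\to a_\pm'$ with $a_-\geq a_-'\geq a_+'\geq a_+$ outside $[0,1]$ (monotonicity holds there since $\lambda_s$ is $s$-independent), it suffices to control $\mathfrak{a}$ and $\eta$ on the compact interval $[0,1]$. On the subset of $[0,1]$ where $\|\nabla_{J_s}\mathcal{A}_{\lambda_s}\|_{J_s}< C_0$ we already have $|\eta(s)|\leq C_1(1+|\mathfrak{a}(s)|)$; on the complementary set the gradient term in $\frac{d}{ds}\mathfrak{a}$ is at least $C_0^2$ and can swallow the error term once $\rho$ is small, so $\mathfrak{a}$ is (almost) decreasing there. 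Combining, one gets a differential inequality of the form $\frac{d}{ds}\mathfrak{a}(s)\leq \rho\, \Phi(\mathfrak{a}(s))$ with $\Phi$ at most affine (the affinity coming precisely from the Lemma \ref{sfundlemma} bound plus the structure of $\partial_s\mathcal{A}$), from which Grönwall on $[0,1]$ yields $|\mathfrak{a}(s)|\leq C(a_-,a_+,\rho)$ provided $\rho$ is below a threshold $\rho_{\{\lambda_s\}}$ depending only on the homotopy. This gives $C_\eta$ via Lemma \ref{sfundlemma} again, after noting that at points where $\|\nabla\|\geq C_0$ the $\eta$-value cannot have wandered far from a nearby point where $\|\nabla\|<C_0$ — or more cleanly, by integrating $\partial_s\eta=-\int_{S^1}\kappa H(u)\,dt$, whose integrand is uniformly bounded (since $H$ is bounded), over the bounded interval $[0,1]$, and using the asymptotic $\eta$-bounds coming from Remark \ref{actionatcrit} at $s=\pm\infty$.

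Finally, for the energy bound: $E(u,\eta)=\int_{\R}\|\partial_s(u,\eta)\|_{J_s}^2\,ds$, and integrating the fundamental identity over $\R$ gives $E(u,\eta)=\mathfrak{a}(-\infty)-\mathfrak{a}(+\infty)+\int_0^1(\partial_s\mathcal{A}_{\lambda_s})(u(s),\eta(s))\,ds$. The first difference is bounded by $a_--a_+$, and the integral is bounded using the now-established $C_\eta$ together with $\sup\|\partial_s\lambda_s\|<\rho$ and a Cauchy–Schwarz estimate that reabsorbs a small multiple of $E$ into the left side once $\rho$ is small; this yields $E(u,\eta)<C_E$. I expect the main obstacle to be the delicate absorption argument: making the error term $\partial_s\mathcal{A}_{\lambda_s}$ genuinely controllable requires expressing it in terms of quantities ($\eta$, $u^*d\lambda_s$, the Hamiltonian terms) that are themselves bounded only \emph{after} the bootstrap, so the estimates must be set up so that the $\rho$-smallness closes the loop — essentially a continuous-induction / maximum-principle argument on the value of $\mathfrak{a}(s)$ over $s\in[0,1]$, which is exactly the technical heart of \cite[Lemma 2.9; Theorem 2.10]{BaeFrauenfelder2010} and the place where one must be careful that all constants depend only on $\{\lambda_s\}$, $a_-$, $a_+$ and not on the particular trajectory.
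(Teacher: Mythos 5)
Your overall strategy is the right one and matches the paper's in spirit: bound the error term $\int_0^1(\partial_s\mathcal{A}_{\lambda_s})(u(s),\eta(s))\,ds$ by $\widetilde{C}\bigl(E+1+\|\eta\|_\infty\|X_H^{\lambda_s}\|_\infty+\|X_L^{\lambda_s}\|_\infty\bigr)$ using the Floer equation to trade $\partial_t u$ for $\partial_s u$ plus Hamiltonian terms, then close the estimates by absorption once $\widetilde{C}$ is small. But two of your specific mechanisms fail as stated. First, the pointwise differential inequality $\frac{d}{ds}\mathfrak{a}(s)\leq\rho\,\Phi(\mathfrak{a}(s))$ with $\Phi$ affine cannot be established: the error term contains $\widetilde{C}\,|\eta(s)|\,\|X_H^{\lambda_s}\|_\infty$, and at a value of $s$ where $\|\nabla_{J_s}\mathcal{A}_{\lambda_s}\|_{J_s}\geq C_0$ there is no pointwise bound of $|\eta(s)|$ in terms of $\mathfrak{a}(s)$, so the term $-\|\nabla_{J_s}\mathcal{A}_{\lambda_s}\|^2_{J_s}$ cannot ``swallow'' a quantity that is a priori unbounded. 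The paper avoids Gr\"onwall entirely: it works with \emph{integrated} inequalities and the waiting time $\tau(s)=\inf\{\tau\geq0:\ \|\nabla_{J_s}\mathcal{A}_{\lambda_s}(u(s+\tau),\eta(s+\tau))\|_{J_s}<C_0\}$, which satisfies $\tau(s)\leq E/C_0^2$, so that $|\eta(s)|\leq|\eta(s+\tau(s))|+\tau(s)^{1/2}E^{1/2}\leq C_1(1+|\mathcal{A}_{\lambda_s}|)+E/C_0$; substituting this into the bounds for $E$ and $|\mathcal{A}_{\lambda_s}|$ produces a linear system in $(E,\|\eta\|_\infty)$ that is solvable precisely when $\widetilde{C}$ lies below an explicit threshold $\rho$.

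Second, your ``cleaner'' alternative for the $\eta$-bound is incorrect: integrating $|\partial_s\eta|\leq\|H\|_\infty$ over $[0,1]$ only relates $\eta(0)$ to $\eta(1)$; to reach the asymptotic values of Remark \ref{actionatcrit} you must cross $(-\infty,0]$ or $[1,+\infty)$, and there the bound $|\partial_s\eta|\leq c$ gives no uniform control — the drift $\int_{-\infty}^{0}\partial_s\eta\,ds$ can be arbitrarily large over a family of trajectories even though each individual integral converges. Your first suggestion, that $\eta(s)$ cannot wander far from a nearby point where the gradient is small, is the correct one, but it has to be quantified exactly by the estimate $\tau(s)\leq E/C_0^2$ above, which couples the $\eta$-bound back to the (not yet established) energy bound; breaking this coupling by the smallness of $\rho$, rather than a continuous induction on $\mathfrak{a}(s)$, is the technical heart of the proof.
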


Using the above estimates we can prove Theorem \ref{continuationcompactness}.

\begin{proof}[Proof of Theorem \ref{continuationcompactness}]
The uniform energy bound of Lemma \ref{energybound} and the fact that $X_H$ and $X_L$ have support in $\Sigma\times (e^{-2\nu},e^{2\nu})$ as well as the fact that $\lambda_s$ restricted to $\Sigma\times (0,e^{-4\nu}]$ is just some multiple of $r\alpha_0$, respectively $r\alpha_1$ and $r\alpha_2$ on $\Sigma\times\left(e^{+3\nu},+\infty\right)$, shows that outside of $\Sigma\times (e^{-4\nu},e^{2\nu}]$ solutions of the Floer equations are exactly $J$-holomorphic curves. This allows us to apply the maximum principle to keep Floer trajectories from escaping to $+\infty$. As in the proof of Theorem \ref{compactness} we use Theorem \ref{SFT} to argue that neither do Floer trajectories escape to the negative end.

This shows that $u(s)$ stays in a compact subset of $\Sigma \times (0,+\infty)$, say $\Sigma\times[k,l]$. With the uniform $L^{\infty}$-bound for $\eta(s)$ from Lemma \ref{energybound} the usual bubbling-off arguments apply, thus yielding $L^\infty$-bounds for the derivatives of $u(s)$. Thus we can use Arzel\`{a}-Ascoli for families of Rabinowitz Floer trajectories whose asymptotes have action bounded by $a_\pm$. We finally get a $C_{\text{loc}}^{\infty}\left(\R \times S^1, M\right)\times C^{\infty}_{\text{loc}}\left(\R,\R\right)$ convergence of a subsequence of trajectories. The result then follows by the usual arguments.  
\end{proof}

It remains to prove Lemma \ref{energybound}.

\begin{proof}[Proof of Lemma \ref{energybound}]
  In the following we will denote 
\[
	\sup_{s \in[0,1]}\sup_{\Sigma \times (0,e^{2\nu}]}\left\|\partial_s \lambda_s\right\|_{J_s}=:\widetilde{C}. 
\]
  
  We want to estimate the energy
  \begin{align*}
   E(u(s),\eta(s))&=\int_{\R}\left\|\partial_s(u,\eta)\right\|_{J_s}^2\,ds=\int_{-\infty}^{+\infty}\left\|-\nabla_{J_s}\mathcal{A}_{\lambda_s}(u(s),\eta(s))\right\|_{J_s}^2\,ds\\
   &=\int_{-\infty}^{+\infty}\left\llangle-\nabla_{J_s}\mathcal{A}_{\lambda_s}(u(s),\eta(s)),-\nabla_{J_s}\mathcal{A}_{\lambda_s}(u(s),\eta(s))\right\rrangle_{J_s}\,ds\\
   &=\int_{-\infty}^{+\infty}-d\mathcal{A}_{\lambda_s}(u(s),\eta(s))\left(-\nabla_{J_s}\mathcal{A}_{\lambda_s}(u(s),\eta(s))\right)\,ds\\
   &=\int_{-\infty}^{+\infty}-d\mathcal{A}_{\lambda_s}(u(s),\eta(s))(\partial_s (u(s),\eta(s)))\,ds\\
   &=\int_{-\infty}^{+\infty}\left[-\frac{d}{d_s}\left(\mathcal{A}_{\lambda_s}(u(s),\eta(s))\right)+\left(\frac{\partial}{\partial_s}\mathcal{A}_{\lambda_s}\right)(u(s),\eta(s))\right]\,ds\\   
      &=\mathcal{A}_{\lambda_s}\left(u_{-},\eta_{-}\right)-\mathcal{A}_{\lambda_s}\left(u_{+},\eta_{+}\right)+\int_{0}^{1}\int_{S^1}(u(s))^*(\partial_s\lambda_s)\,ds\\
   &\leq a_--a_+ +\int_{0}^{1}\int_{S^1}(u(s))^*(\partial_s\lambda_s)\,ds.
  \end{align*}

Recall that by the maximum principle $u(s) \subset \Sigma \times (0,e^{2\nu}]$.
  Using the Floer equation for $\partial_s u(s)$ we have
  \begin{align*}
   \left|\int_{0}^{1}\int_{S^1}(u(s))^*(\partial_s\lambda_s)\,ds\right| 
   &\leq
   \int_0^1\sup_{s\in[0,1]}\sup_{\Sigma\times(0,e^{2\nu}]}\left\|\partial_s \lambda_s\right\|_{J_s}\cdot \int_{S^1}\left|\partial_t u(s)\right|\,dt \,ds\\
   &= \widetilde{C}\int_0^1 \left\|\partial_t u(s)\right\|_{J_s}\,ds
   =\widetilde{C}\int_0^1\left\|J_s\partial_s u(s)+\eta(s) \kappa X^{\lambda_s}_H+\beta_\nu X^{\lambda_s}_L\right\|_{J_s}\,ds\\
   &\leq\widetilde{C}\int_0^1\left\|\partial_s u(s)\right\|_{J_s}\,ds+\widetilde{C}\int_0^1\left\|\eta(s) \kappa X^{\lambda_s}_H+\beta_\nu X^{\lambda_s}_L\right\|_{J_s}\,ds\\
   &\overset{(\ast)}{\leq}\widetilde{C}\left[\left(\int_0^1\left\|\partial_s u(s)\right\|_{J_s}\,ds\right)^2+1\right]+\widetilde{C}\int_0^1\left\|\eta(s)\kappa X^{\lambda_s}_H+\beta_\nu X^{\lambda_s}_L\right\|_{J_s}\,ds\\
   &\leq \widetilde{C}\int_0^1\left\|\partial_s u(s)\right\|_{J_s}^2\,ds+\widetilde{C}+\widetilde{C}\int_0^1\left\|\eta(s) \kappa X^{\lambda_s}_H+\beta_\nu X^{\lambda_s}_L\right\|_{J_s}\,ds\\
   &=\widetilde{C}\left(E(u(s),\eta(s))+1+  \left\|\eta\right\|_{\infty} \left\|\kappa X^{\lambda_s}_H\right\|_{\infty}+ \left\|\beta_\nu X^{\lambda_s}_L\right\|_{\infty}\right)\\
   &\leq\widetilde{C}\left(E(u(s),\eta(s))+1+ \left\|\eta\right\|_{\infty} \left\|X^{\lambda_s}_H\right\|_{\infty}+ \left\|X^{\lambda_s}_L\right\|_{\infty}\right),
  \end{align*}
	where we wrote $\|X^{\lambda_s}_H\|_{\infty} := \sup_{s\in[0,1]}\sup_{M}\|X_{H_s}\|_{J_s}$ (and analogously for $\|X^{\lambda_s}_L\|_{\infty}$) and where $(\ast)$ used the fact that for $x\geq 0$ it holds that $x\leq x^2+1$.
	
  Putting everything together yields
  \[
   E(u(s),\eta(s))\leq a_--a_+ + \widetilde{C}\left(E(u(s),\eta(s))+1+\left\|\eta\right\|_{\infty}\left\|X^{\lambda_s}_H\right\|_{\infty}+\left\|X^{\lambda_s}_L\right\|_{\infty}\right)
  \]
  and thus
  \[
   \left(1-\widetilde{C}\right)E(u(s),\eta(s))\leq a_--a_+ +\widetilde{C}\left(1+\left\|\eta\right\|_{\infty}\left\|X^{\lambda_s}_H\right\|_\infty+\left\|X^{\lambda_s}_L\right\|_{\infty}\right).
  \]
  So for $\widetilde{C}<1$ we have that
  \[
    E(u(s),\eta(s))\leq \frac{a_--a_+}{1-\widetilde{C}} +\frac{\widetilde{C}}{1-\widetilde{C}}\left(1+\left\|\eta\right\|_{\infty}\left\|X^{\lambda_s}_H\right\|_\infty+\left\|X^{\lambda_s}_L\right\|_{\infty}\right)
   \]
    and thus if $\widetilde{C}<\frac12$ also
    \begin{equation}\label{eq:Ebound}
     E(u(s),\eta(s))<2\left|a_--a_+\right|+2\widetilde{C}\left(1+\left\|\eta\right\|_{\infty}\left\|X^{\lambda_s}_H\right\|_\infty+\left\|X^{\lambda_s}_L\right\|_{\infty}\right). 
  \end{equation}

 Note also that 
 \begin{align}
  \left|\mathcal{A}_{\lambda_s}(u(s),\eta(s))\right|&\leq\max\left\{|a_+|,|a_-|\right\}+\int_{-\infty}^{+\infty}\left|\frac{\partial}{\partial s}\Big(\mathcal{A}_{\lambda_s}(u(s),\eta(s))\Big)\right|\,ds \nonumber \\
  &=\max\left\{|a_+|,|a_-|\right\}+\int_0^1\left|\int_{S^1}(u(s))^*(\partial_s\lambda_s)\right|\,ds \label{eq:A} \\
  &\leq\max\left\{|a_+|,|a_-|\right\} +\widetilde{C}\left(E(u(s),\eta(s))+1+\left\|\eta\right\|_\infty\left\|X^{\lambda_s}_H\right\|_\infty+\left\|X^{\lambda_s}_L\right\|_\infty\right) \label{eq:boundforA}
 \end{align}
 
	Indeed, to show \eqref{eq:A}, we note that
	\begin{align*}
	&\mathcal{A}_{\lambda_s}(u(s),\eta(s))=\underbrace{\mathcal{A}_{\lambda_-}(u_{-},\eta_{-})}_{\leq a_{-}}+\int_{-\infty}^s\frac{\partial}{\partial s}\left(\mathcal{A}_{\lambda_s}(u(s),\eta(s))\right)\,ds \quad\text{and}\\
	&\mathcal{A}_{\lambda_s}(u(s),\eta(s))=\underbrace{\mathcal{A}_{\lambda_+}(u_{+},\eta_{+})}_{\geq a_{+}}-\int_{s}^{+\infty}\frac{\partial}{\partial s}\left(\mathcal{A}_{\lambda_s}(u(s),\eta(s))\right)\,ds,
	\end{align*}
	where
	\[
	\frac{\partial}{\partial s}\left(\mathcal{A}_{\lambda_s}(u(s),\eta(s))\right)=\left(\frac{\partial}{\partial s}\mathcal{A}_{\lambda_s}\right)(u(s),\eta(s))+\underbrace{d\mathcal{A}_{\lambda_s}(\partial_s u(s),\partial_s\eta(s))}_{=-\left\|\nabla_{J_s}\mathcal{A}_{\lambda_s}(u(s),\eta(s))\right\|_{J_s}^2}.
	\]
	But in fact the homotopy, $\lambda_s$ only changes on $[0,1]$ and so we have that
	\begin{align*}
	\int_{-\infty}^s\left(\frac{\partial}{\partial s}\mathcal{A}_{\lambda_s}\right)(u(s),\eta(s))\,ds
	&=\int_0^s\left(\frac{\partial}{\partial s}\mathcal{A}_s\right)(u(s),\eta(s))\,ds
	\leq\int_0^1\left|\left(\frac{\partial}{\partial_s}\mathcal{A}_s\right)(u(s),\eta(s))\right|\,ds\\
	&=\int_0^1\left|\int_{S^1}(u(s))^*(\partial_s\lambda_s)\right|\,ds
	\end{align*}
	and analogously for $\int_s^{+\infty}$. Finally, we have
	\begin{align*}
	\mathcal{A}_{\lambda_s}(u(s),\eta(s))&\leq a_{-}+\int_{-\infty}^s\left(\frac{\partial}{\partial s}\mathcal{A}_{\lambda_s}\right)(u(s),\eta(s))-\left\|\nabla\mathcal{A}_{\lambda_s}(u(s),\eta(s))\right\|^2\\
	&\leq a_{-}+\int_0^1\left|\int_{S^1}(u(s))^*(\partial_s\lambda_s)\right|\,ds
	\end{align*}
	and
	\begin{align*}
	\mathcal{A}_{\lambda_s}(u(s),\eta(s))&\geq a_{+}-\int_{s}^{+\infty}\left(\frac{\partial}{\partial s}\mathcal{A}_{\lambda_s}\right)(u(s),\eta(s))+\left\|\nabla\mathcal{A}_{\lambda_s}(u(s),\eta(s))\right\|^2\\
	&\geq a_{+}-\int_0^1\left|\int_{S^1}(u(s))^*(\partial_s\lambda_s)\,dt\right|\,ds.
	\end{align*}
	This gives \eqref{eq:A}
	\[
	\left|\mathcal{A}_{\lambda_s}(u(s),\eta(s))\right|\leq \max\left\{|a_{-}|,|a_{+}|\right\}+\int_0^1\left|\int_{S^1}(u(s))^*(\partial_s\lambda_s)\right|\,ds.
	\]

Now define for $s\in\R$
\[
 \tau(s):=\inf\left\{\tau\geq 0:\quad\left\|\nabla_{J_s}\mathcal{A}_{\lambda_s}\Big(u(s+\tau),\eta(s+\tau)\Big)\right\|_{J_s}<C_0 \right\},
\]
where $C_0$ is the constant from Lemma \ref{sfundlemma}.
Then $\tau(s)$ satisfies
\begin{equation}\label{eq:tau}
 \tau(s)\leq\frac{E(u(s),\eta(s))}{C_0 ^2}.
\end{equation}
Indeed,
\[
 E(u(s),\eta(s))\geq\int_{s}^{s+\tau(s)}\left\|\nabla_{J_s}\mathcal{A}_{\lambda_s}(u(s),\eta(s))\right\|_{J_s}^2\,ds\geq \tau(s)C_0^2.
\]

Hence, using the estimate \eqref{fundbound} from Lemma \ref{sfundlemma} and H\"older's inequality we have

\begin{align*}
 \left|\eta(s)\right|&~=\left|\eta(s+\tau(s))-\int_s^{s+\tau(s)}\partial_s\eta(s)\,ds\right|
 \leq \left|\eta(s+\tau(s))\right|+\int_{s}^{s+\tau(s)}\left|\partial_s\eta(s)\,ds\right|\,ds\\
 &~\leq C_1\left(\left|\mathcal{A}_{\lambda_s}(u(s),\eta(s))\right|+1\right)+\left|\tau(s)\right|^{\frac12}\left(\int_s^{s+\tau(s)}\left|\partial_s\eta(s)\right|^2\right)^{\frac12}.
 \end{align*}
 We can estimate
 \[
 \left(\int_s^{s+\tau(s)}\left|\partial_s\eta(s)\right|^2\right)^{\frac12}
 \leq\left( \int_s^{s+\tau(s)}\left\|\partial_s (u(s),\eta(s))\right\|_{J_s}^2\,ds \right)^{\frac12} \leq \left(E(u(s),\eta(s))\right)^{\frac12}
 \]
 and use equations \eqref{eq:boundforA} and \eqref{eq:tau} to deduce that
 \begin{align*}
 \left|\eta(s)\right|&\overset{\eqref{eq:boundforA},\eqref{eq:tau}}{\leq}C_1\left[\max\left\{|a_+|,|a_-| \right\}+\widetilde{C}\left(E(u(s),\eta(s))+1+\left\|\eta\right\|_\infty\left\|X^{\lambda_s}_H\right\|_\infty+\left\|X^{\lambda_s}_L\right\|_\infty\right)\right]\\
 &\hspace{20ex}+\frac{E(u(s),\eta(s))}{C_0}.
 \end{align*}

Inserting the above in \eqref{eq:Ebound} yields
\begin{align*}
 \left\|\eta\right\|_\infty & \leq  C_1\left(1+\max\left\{|a_+|,|a_-|\right\}\right)\\
 &\quad +\left(2\left|a_--a_+\right|+2\widetilde{C}\left(1+\left\|\eta\right\|_{\infty}\left\|X^{\lambda_s}_H\right\|_\infty+\left\|X^{\lambda_s}_L\right\|_{\infty}\right)\right)\left(C_1\widetilde{C}+\frac{1}{C_0}\right)\\
 &\quad +C_1\widetilde{C}\left(1+\left\|\eta\right\|_\infty\left\|X^{\lambda_s}_H\right\|_\infty+\left\|X^{\lambda_s}_L\right\|_\infty\right).
\end{align*}
Hence we have that
\begin{align*}
 &\left(1-\widetilde{C}\left\|X^{\lambda_s}_H\right\|_{\infty}\left(2C_1\widetilde{C}+\frac{2}{C_0}+C_1\right)\right)\left\|\eta\right\|_\infty\\
 &\qquad\leq C_1\left(1+\max\left\{|a_+|,|a_-|\right\}\right)
 +\left(2\left|a_--a_+\right|
 +2\widetilde{C}\left(1+\left\|X^{\lambda_s}_L\right\|_\infty\right)\right)\left(C_1\widetilde{C}+\frac{1}{C_0}\right)+C_1\widetilde{C}\\
 &\qquad=:B.
\end{align*}
To finally get a bound for $\eta$ we need to have
\begin{equation}\label{conditionetabound}
 \underbrace{\widetilde{C}\left\|X^{\lambda_s}_H\right\|_\infty\left(2C_1\widetilde{C}+\frac{2}{C_0}+C_1\right)}_{=:A}<1\quad\text{and}\quad
 \widetilde{C}<\frac12.
\end{equation}
Let $\rho > 0$ denote a value so that  \eqref{conditionetabound} is satisfied for $\widetilde{C}=\rho$.

Now, for $\widetilde{C}< \rho$ we get
that 
\[
 \left\|\eta\right\|_\infty\leq\frac{1}{1-A}B=:C_\eta
\]
and thus the first assertion of the lemma. Combining this with \eqref{eq:Ebound} gives the energy bound and thus completes the proof
\[
 E(u(s),\eta(s))< 2\left|a_--a_+\right|+2\widetilde{C}\left(1+2C_\eta\left\|X^{\lambda_s}_H\right\|_\infty+\left\|X^{\lambda_s}_L\right\|_\infty\right)=:C_E.
\]
\end{proof}

\subsection{Invariance}\label{sec:invariance}
With the help of Theorem \ref{continuationcompactness} we are now able to define a quasi-isomorphism

\begin{equation*}
\widetilde{\Phi}: \text{CF}(\mathcal{A}^{\hat{\varphi}}_{\left(\alpha_0,\alpha_1\right)}) \, \to \, \text{CF}(\mathcal{A}^{\hat{\varphi}}_{\left(\alpha_0,\alpha_2\right)}). 
\end{equation*}

First, we remark that $\lambda_s=f(s,x,r)\alpha_0$ and $J_s$ is SFT-like for $\lambda_s$ apart from a small neighborhood of the hypersurface $\Sigma\times\left\{1\right\}$. Outside of this neighborhood we have $\left\|\lambda_s\right\|_{J_s}=\sqrt{r}$.
Recall that by the maximum principle, Floer trajectories remain in $\Sigma\times(0,e^{2\nu})$, thus we can estimate

 \begin{align*}
 \widetilde{C}&=\sup_{s\in[0,1]}\sup_{\Sigma\times(0,+\infty)}\left\|\partial_ s\lambda_s(x,r)\right\|_{J_s}
 =\sup_{s\in[0,1]}\sup_{\Sigma\times (0,e^{2\nu}]}\left\|\partial_s f \alpha_0\right\|_{J_s}\\
 & =\sup_{s\in[0,1]}\sup_{\Sigma\times (0,e^{2\nu}]}\left\|\frac{\partial_s f}{f} f\alpha_0\right\|_{J_s}=\sup_{s\in[0,1]}\sup_{\Sigma\times (0,e^{2\nu}]}\left\|\frac{\partial_s f}{f} \lambda_s\right\|_{J_s}\\
 &\leq \sqrt{e^{2\nu}}\left\|\frac{\partial_s f}{f}\right\|_\infty+\epsilon<K\\
 \end{align*}
 for a constant $K\in \R$. Here, the $\epsilon$-Error comes from the fact that $J_s$ is not SFT-like on a small neighborhood.


Now, use an adiabatic argument to subdivide the homotopy into a family of homotopies $\left\{(\lambda_s^j,J_s^j)\right\}_{j}$, each of which satisfies the smallness assumption on 
$\sup_{s\in[0,1]}\sup_{\Sigma\times(0,e^{2\nu}]}\left\|\partial_s\lambda^j_s\right\|_{J^j_s}$.
Note that since the norms of the vector fields $X^{\lambda_s}_H$ and $X^{\lambda_s}_L$ are bounded, this still holds true after a subdivision of the one form and the almost complex structure.
%
%
%
Therefore, we can apply Theorem \ref{continuationcompactness} to get the desired compactness properties for $\mathcal{M}^{a_+}_{a_-}\left(\mathcal{A}^{\hat{\varphi}}_{\lambda_s^j},J_s\right)$ and we can define maps 
\[
\widetilde{\Phi_j}: \text{CF}\left(\mathcal{A}^{\hat{\varphi}}_{\lambda^j}\right) \, \to \, \text{CF}\left(\mathcal{A}^{\hat{\varphi}}_{\lambda^{j+1}}\right) 
\]
by 
\[
 \widetilde\Phi_j(z) = \sum_{w \in \crit\left(\mathcal{A}^{\hat{\varphi}}_{\lambda^{j+1}}\right)} \# \mathcal{M}^0\left(z,w;\mathcal{A}^{\hat{\varphi}}_{\lambda_s^j},J_s\right)w
\]
for  $z \in \crit(\mathcal{A}^{\hat{\varphi}}_{\lambda^j})$. Here $\mathcal{M}^0\left(z,w;\mathcal{A}^{\hat{\varphi}}_{\lambda^j_s},J_s\right)$ denotes the zero-dimensional part of the space of Floer trajectories from $z$ to $w$.

Furthermore, the $\widetilde{\Phi_j}$ descend to define continuation homomorphisms 
\[
\Phi_j: \RFH((M,\lambda^j);\hat{\varphi}) \, \to \, \RFH((M,\lambda^{j+1});\hat{\varphi}).
\]

With the inverse  homotopy $\widetilde{\lambda}^j_s = \lambda^{j+1} + \beta(s)(\lambda^{j+1} - \lambda^{j})$ we analogously get maps
\[
\widetilde{\Psi}_j: \text{CF}\left(\mathcal{A}^{\hat{\varphi}}_{\lambda^{j+1}}\right) \, \to \, \text{CF}\left(\mathcal{A}^{\hat{\varphi}}_{\lambda^{j}}\right).
\]


A homotopy of homotopies argument shows that $\Phi_j$ is an isomorphism.
This holds for every $j$ with $0 \leq j \leq n$, and we get:
\begin{prop}\label{Iso} 
Let $(\Sigma,\xi)$ be a hypertight contact manifold and let $\alpha_1,\alpha_2\in\mathcal{C}(\xi)$. Let $\lambda_1\in\Omega_{\nu}(\alpha_0,\alpha_1)$ and $\lambda_2\in\Omega_{\nu}(\Sigma,\xi)$. Then the homology groups $\RFH((M,\lambda_i);\hat{\varphi})$ are well-defined for $i=1,2$ and we have an isomorphism
\[
\RFH((M,\lambda_1);\hat{\varphi}) \, \cong \, \RFH((M,\lambda_2);\hat{\varphi}).
\]
\end{prop}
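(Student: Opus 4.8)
The plan is to assemble the constructions of Sections \ref{sec:continuationmaps} and \ref{sec:invariance}. For well-definedness: when $\hat{\varphi}$ is nondegenerate the functional $\mathcal{A}^{\hat{\varphi}}_{(\alpha_0,\alpha_i)}$ is Morse, and since nondegeneracy is a local condition this does not depend on the choice of $\lambda_i\in\Omega_{\nu}(\alpha_0,\alpha_i)$; the compactness Theorem \ref{compactness} then makes the chain complex and boundary operator well-defined, and the usual arguments give independence of the auxiliary Morse--Smale pair $(f,g)$ and of $J$. For $\hat{\varphi}=\id$ one uses instead the Morse--Bott cascade construction, and for a general (possibly degenerate) $\hat{\varphi}$ one defines $\RFH((M,\lambda_i);\hat{\varphi})$ as the limit over a sequence of nondegenerate approximations, as explained after Theorem \ref{compactness}; the isomorphisms built below are compatible with these limits, which is precisely what makes the limit independent of the approximating sequence. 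Note that independence of $\RFH$ of the choice of $\lambda_i\in\Omega_{\nu}(\alpha_0,\alpha_i)$ for fixed $\alpha_i$ is itself the special case $\alpha_1=\alpha_2$ of the isomorphism below.

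For the isomorphism I take the homotopy $\lambda_s=\lambda_1+\zeta(s)(\lambda_2-\lambda_1)$ of \eqref{contactsform} together with the interpolating family $J_s$. As computed just above, $\widetilde{C}=\sup_{s\in[0,1]}\sup_{\Sigma\times(0,e^{2\nu}]}\|\partial_s\lambda_s\|_{J_s}$ is finite, but it need not lie below the threshold required by Theorem \ref{continuationcompactness} and Lemma \ref{energybound}. Since the constants $C_0,C_1$ of Lemma \ref{sfundlemma} and the norms $\|X^{\lambda_s}_H\|_\infty$, $\|X^{\lambda_s}_L\|_\infty$ stay bounded along the whole homotopy, there is a \emph{uniform} threshold $\rho_0>0$ such that any sub-homotopy whose $\partial_s$-norm is smaller than $\rho_0$ satisfies the smallness hypotheses. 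I then choose $0=s_0<s_1<\dots<s_{n+1}=1$ fine enough and reparametrise, writing $\lambda_s$ as a concatenation of homotopies $(\lambda^j_s,J^j_s)$, $j=0,\dots,n$, each running from $\lambda^j:=\lambda_{s_j}$ to $\lambda^{j+1}:=\lambda_{s_{j+1}}$ with $\partial_s$-norm smaller than $\rho_0$. Theorem \ref{continuationcompactness} then applies to $\mathcal{M}^{a_+}_{a_-}(\mathcal{A}^{\hat{\varphi}}_{\lambda^j_s},J^j_s)$, and counting its zero-dimensional part defines chain maps $\widetilde{\Phi}_j\colon\CF(\mathcal{A}^{\hat{\varphi}}_{\lambda^j})\to\CF(\mathcal{A}^{\hat{\varphi}}_{\lambda^{j+1}})$, descending to continuation homomorphisms $\Phi_j\colon\RFH((M,\lambda^j);\hat{\varphi})\to\RFH((M,\lambda^{j+1});\hat{\varphi})$; the reversed homotopies give maps $\widetilde{\Psi}_j$ in the opposite direction.

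Each $\Phi_j$ is then an isomorphism by the standard homotopy-of-homotopies argument: concatenating $\lambda^j_s$ with its reverse is homotopic, through a two-parameter family of homotopies, to the constant homotopy at $\lambda^j$, which induces the identity, and the associated two-parameter moduli space yields a chain homotopy between $\widetilde{\Psi}_j\circ\widetilde{\Phi}_j$ and the identity, and symmetrically for $\widetilde{\Phi}_j\circ\widetilde{\Psi}_j$. Composing, $\widetilde{\Phi}:=\Phi_n\circ\cdots\circ\Phi_0$ is the asserted isomorphism $\RFH((M,\lambda_1);\hat{\varphi})\cong\RFH((M,\lambda_2);\hat{\varphi})$. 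I expect the main obstacle to be that the two-parameter family entering the homotopy-of-homotopies must itself stay in the regime where Theorem \ref{continuationcompactness} applies; this is handled by choosing the two-parameter interpolation so that its $\partial_s$-norm remains below $\rho_0$ uniformly in the second parameter, subdividing the second parameter as well if necessary. The remaining points — that the $\widetilde{\Phi}_j$ are chain maps, that they descend to homology and are compatible with the cascade and limit constructions recalled above, and that the result is independent of the subdivision and of the other auxiliary data — are routine in Floer theory.
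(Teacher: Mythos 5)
Your proposal is correct and follows essentially the same route as the paper: the linear homotopy $\lambda_s$ of \eqref{contactsform}, the observation that $\widetilde{C}$ is finite while the smallness threshold of Theorem \ref{continuationcompactness} and Lemma \ref{energybound} is uniform (independent of the action window), the adiabatic subdivision into sub-homotopies below that threshold, the resulting continuation maps $\widetilde{\Phi}_j$, $\widetilde{\Psi}_j$, and the homotopy-of-homotopies argument showing each $\Phi_j$ is an isomorphism. Your explicit remark that the two-parameter family must also be kept within the compactness regime is a point the paper leaves implicit, but it does not change the argument.
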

 Choosing ${\alpha_2=\epsilon\alpha_0}$ shows that $\RFH\left((M,\lambda_1);\hat{\varphi}\right)\cong\RFH\left((M,r\alpha_0);\hat{\varphi}\right)$, which is the Rabinowitz Floer homology of the actual symplectisation $(\Sigma\times (0,+\infty),r\epsilon\alpha_0)$ and thus we have proved Theorem \ref{mainthm}, using that the Rabinowitz Floer homology is invariant under rescaling of a contact form. Thus it is justifiable to write $\RFH(\Sigma,\xi;\hat{\varphi})$ for a hypertight contact manifold $(\Sigma,\xi)$ .

\section{Applications}

In the previous section, we have elaborated, that for a hypertight contact manifold, the Rabinowitz Floer homology is independent of the supporting contact form. This allows us to deduce results for hypertight manifolds by using the knowledge of the Rabinowitz Floer homology groups for a convenient choice of supporting contact form, namely one that has no contractible Reeb orbits.

For that we first state some properties of Rabinowitz Floer homology that we need, for more details see \cite{AlbersFuchsMerry2013}. If we write $\RFH\left(\Sigma,\alpha_1;\hat{\varphi}\right)$, we mean the Rabinowitz Floer homology of the perturbed action functional on $(M,\lambda)$ where $\lambda \in \Omega_{\nu}(\alpha_0, \alpha_1)$.


\begin{enumerate}
	\item 
	For $\alpha_0$ a supporting contact form without any contractible Reeb orbits and $\hat{\varphi}\in\mathcal{P}\Cont_0\left(\Sigma,\xi\right)$,  the Rabinowitz Floer homology is canonically isomorphic to the singular homology
	\[
	 \RFH_*\left(\Sigma,\alpha_0;\hat{\varphi}\right)\cong\mathrm{H}_{*+n-1}\left(\Sigma;\Z_2\right).
	\]
	
	The Rabinowitz Floer homology is independent of $\hat{\varphi}\in\mathcal{P}\Cont_0\left(\Sigma,\xi\right)$ in the following sense: there are canonical isomorphisms
	\[
	 \Phi_{\hat{\varphi}}:\,\RFH\left(\Sigma,\alpha\right)\rightarrow\RFH\left(\Sigma,\alpha;\hat{\varphi}\right),
	\]
	where $\RFH\left(\Sigma,\alpha\right)=\RFH\left(\Sigma,\alpha;\operatorname{\id}\right)$.
	
	For $\hat{\varphi},\,\hat{\psi}\in\mathcal{P}\Cont_0\left(\Sigma,\xi\right)$ there is a canonical map
	\[
	 \Phi_{\hat{\varphi},\hat{\psi}}:\,\RFH\left(\Sigma,\alpha;\hat{\varphi}\right)\rightarrow\RFH(\Sigma,\alpha;\hat{\psi})
	\]
	such that $\Phi_{\hat{\psi}}=\Phi_{\hat{\varphi},\hat{\psi}}\circ\Phi_{\hat{\varphi}}$.
	
	In particular, $\RFH_n\left(\Sigma,\alpha;\hat{\varphi}\right)$ contains a class $[\Sigma_{\hat{\varphi}}]\neq 0$ which is defined by
	\[
	 \Phi_{\hat{\varphi},\hat{\psi}}\left([\Sigma_{\hat{\varphi}}]\right)=[\Sigma_{\hat{\psi}}]\quad\text{and}\quad [\Sigma_{\mathrm{id}}]=[\Sigma]\in\RFH_n\left(\Sigma,\alpha\right)\cong\mathrm{H}_{2n-1}\left(\Sigma;\Z_2\right).
	\]

	\item We denote by $\RFH^c\left(\Sigma,\alpha;\hat{\varphi}\right)$ the Rabinowitz Floer homology generated by the subcomplex of $(u,\eta)\in\crit\left(\mathcal{A}_{\left(\alpha_0,\alpha_1\right)}^{\hat{\varphi}}\right)$ with $\eta\leq c$. At a critical point $(u,\eta)$ the action value is exactly $\mathcal{A}^{\hat{\varphi}}_{(\alpha_0,\alpha_1)}(u,\eta)=\eta$, see Remark \ref{actionatcrit}, and thus the critical points with $\eta\leq c$ form indeed a subcomplex. Then the inclusion of critical points induces a map 
	\[
	 \iota^c_{\hat{\varphi}}:\,\RFH^c\left(\Sigma,\alpha;\hat{\varphi}\right)\rightarrow\RFH\left(\Sigma,\alpha;\hat{\varphi}\right).
	\] 
	In particular, for two paths $\hat{\varphi},\hat{\psi}$ there is a constant $K(\hat{\varphi},\hat{\psi})$ such that the map $\Phi_{\hat{\varphi},\hat{\psi}}$ from property (1) defines a map
	\[
	 \Phi_{\hat{\varphi},\hat{\psi}}:\RFH_*^c\big(\Sigma,\alpha_0,\hat{\varphi}\big)\rightarrow \RFH_*^{c+K(\hat{\varphi},\hat{\psi})}\left(\Sigma,\alpha_0,\hat{\psi}\right)
	\]
	for any $c\in\R$.
\end{enumerate}
 
\begin{rem}\phantomsection $ $
\begin{enumerate}
 \item[1.] For property (1) and (2) we use the fact that one can also define the $s$-dependent moduli spaces $\mathcal{M}^{a_+}_{a_-}\left(\mathcal{A}^{\hat{\varphi_s}}_{\lambda},J_s\right)$ for a family of $s$-dependent contactomorphisms and show that they are compact. For a family of compactly supported contactomorphisms, the estimates for the $s$-dependent functional are immediate.
 

 \item[2.] We can estimate the constant $K$ via the two contact Hamiltonians of $\hat{\varphi},\hat{\psi}$. This estimate will be the content of Proposition \ref{prop:Kestimate}.
\end{enumerate}
\end{rem}

\subsection{Translated points for hypertight contact manifolds}

Let $\varphi$ be a contactomorphism which is contact-isotopic to the identity and let $\rho:\,\Sigma\rightarrow(0,\infty)$ be such that $\varphi^*\alpha=\rho\alpha$. Recall that a point $x\in\Sigma$ is a \textbf{translated point of $\varphi$} if there exists $\eta\in\R$ such that
\[
 \varphi(x)=\varphi^\eta_R(x),\quad\rho(x)=1.
\]

\begin{rem}\label{rem:leafwiseintersection}
 We identify the hypersurface $\Sigma=\Sigma\times \{1\}$. A \textbf{leafwise intersection point} for $\phi_L^1$ is a point $(x,1)$ such that
 \[
  \phi_L^1(x,1)=\left(\varphi^{-\eta}_R(x),1\right),
 \]
 where $\phi_L$ is the Hamiltonian diffeomorphism associated to $L$ defined in \eqref{hamdiffeo}.
 
 A point $x\in\Sigma$ is a translated point of $\varphi_1$ if and only if $(x,1)\in M$ is a leafwise intersection point for $\phi_L^1$.
\end{rem}

To detect translated points with Rabinowitz Floer homology, we use the following analogue of \cite[Lemma 3.5]{AlbersFuchsMerry2013}.

\begin{lem}\label{crit=trans}
$(x,r,\eta)$ is a critical point of $\mathcal{A}_{(\alpha_0,\alpha_1)}^{\hat{\varphi}}$ if and only if $p:=x\left(\frac12\right)$ is a translated point for $\varphi$ with respect to $\alpha_1$ with time-shift $-\eta$ and we have
\[
\mathcal{A}_{(\alpha_0,\alpha_1)}^{\hat{\varphi}}(x,r,\eta)=\eta.
\]
\end{lem}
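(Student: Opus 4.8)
The plan is to read off the translated point directly from the critical point equations
\[
\dot u=\eta\,\kappa(t)\,X_H(u)+\beta_\nu(r)\,X_L(u),\qquad \int_0^1\kappa(t)H(u(t))\,dt=0,
\]
exploiting that, by Lemma \ref{lem:Cadmissible}, every critical point has image in $\Sigma\times(e^{-\nu},e^{\nu})$. On that region $\lambda=r\alpha_1$, $\beta_\nu\equiv 1$ and $H(r)=r-1$, so $X_H(x,r)=R_1(x)$ and $\phi_L^t(x,r)=(\varphi_t(x),r\rho_t(x)^{-1})$ by \eqref{hamdiffeo}; moreover $\varphi_t=\mathbbm{1}$ for $t\in[0,\frac12]$ forces the contact Hamiltonian $l_t$, hence $L_t$ and $X_L$, to vanish on $[0,\frac12]$, and forces $\phi_L^t=\mathrm{id}$ there.

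For the forward implication, let $(x,r,\eta)$ be a critical point and put $p:=x(\frac12)$. On $[0,\frac12]$ the first equation is $\dot u=\eta\kappa(t)X_H(u)$, which has no $\partial_r$–component, so $r$ is constant on $[0,\frac12]$; plugging this into the second equation and using $\int_0^{1/2}\kappa=1$ together with $H(r)=r-1$ yields $r\equiv 1$ on $[0,\frac12]$. Integrating $\dot x=\eta\kappa(t)R_1(x)$ over $[0,\frac12]$ then gives $p=\varphi_{R_1}^{\eta}(x(0))$, i.e. $x(0)=\varphi_{R_1}^{-\eta}(p)$. On $[\frac12,1]$ we have $\kappa\equiv 0$, hence $\dot u=X_L(u)$, so $u(t)=\phi_L^{t}(u(\tfrac12))=\phi_L^{t}(p,1)$ and in particular $u(1)=\phi_L^{1}(p,1)=(\varphi(p),\rho(p)^{-1})$. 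Since $u$ is a loop, $u(0)=u(1)$; comparing components gives $\varphi(p)=\varphi_{R_1}^{-\eta}(p)$ and $\rho(p)=1$, which is exactly the statement that $p$ is a translated point of $\varphi$ with respect to $\alpha_1$ with time-shift $-\eta$ (cf. Remark \ref{rem:leafwiseintersection}).

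For the converse I would run the construction in reverse. Given a translated point $p$ with $\varphi(p)=\varphi_{R_1}^{-\eta}(p)$ and $\rho(p)=1$, set $u(t)=(\varphi_{R_1}^{\eta\int_0^t\kappa}(\varphi(p)),1)$ for $t\in[0,\frac12]$ and $u(t)=\phi_L^{t}(p,1)=(\varphi_t(p),\rho_t(p)^{-1})$ for $t\in[\frac12,1]$. These pieces agree to infinite order at $t=\frac12$, since $\kappa$ vanishes on $[\frac12,1]$ and $l_t$ vanishes on $[0,\frac12]$ (hence both vanish to infinite order at $t=\frac12$), so $u$ is a smooth loop with $u(0)=u(1)=(\varphi(p),1)$. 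By the choice $\nu>C(\hat{\varphi};\alpha_1)$ the image of $u$ lies in $\Sigma\times(e^{-\nu},e^{\nu})$ — the bound on $r(t)=\rho_t(p)^{-1}$ being exactly the one used in the proof of Lemma \ref{lem:Cadmissible} — so $\beta_\nu\equiv 1$ along $u$, and a direct substitution shows that $(u,\eta)$ solves the critical point equations (the second one holds because $r\equiv 1$ on $[0,\frac12]$ and $\kappa\equiv 0$ on $[\frac12,1]$). Finally, $\mathcal{A}^{\hat{\varphi}}_{(\alpha_0,\alpha_1)}(x,r,\eta)=\eta$ is precisely Remark \ref{actionatcrit}; alternatively it drops out on the spot, since along a critical point $\int_{S^1}u^*\lambda=\eta\int_0^{1/2}\kappa(t)\,dt+\int_{1/2}^{1}\lambda(X_L(u))\,dt=\eta+\int_{1/2}^{1}L_t(u)\,dt$ (using that $L_t=rl_t$ is fibrewise linear, so $\lambda(X_L)=L_t$), this last term cancels $\int_{S^1}\beta_\nu L_t(u)\,dt$, and $\int_{S^1}\kappa H(u)\,dt$ vanishes because $r\equiv 1$ wherever $\kappa\neq 0$.

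The computation is essentially bookkeeping; the points that need care are the sign in the time-shift, the smooth matching of the two halves of the loop at $t=\frac12$, and the verification that the loop associated to a translated point genuinely stays inside $\Sigma\times(e^{-\nu},e^{\nu})$, where $X_H$ and $\phi_L^t$ have the closed forms above, so that it is a critical point of the \emph{cut-off} functional and not merely of the uncut one. Strictly, on the translated point side the correspondence is with those $p$ whose associated loop $t\mapsto u(t)$ is contractible; this is automatic on the critical point side because $\crit(\mathcal{A}^{\hat{\varphi}}_{(\alpha_0,\alpha_1)})\subset\mathcal{L}M$, and it is precisely this class of translated points that enters the Floer complex.
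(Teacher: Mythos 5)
Your proof is correct and is exactly the standard verification that the paper delegates to its citation of \cite[Lemma 3.5]{AlbersFuchsMerry2013}: use the admissibility of $\hat{\varphi}$ and the second critical point equation to force $r\equiv 1$ on $[0,\tfrac12]$, integrate the Reeb flow on $[0,\tfrac12]$ and the Hamiltonian flow $\phi_L^t$ on $[\tfrac12,1]$, and close up the loop. Your care with the cut-off $\beta_\nu$ (via Lemma \ref{lem:Cadmissible}), the sign of the time-shift, and the contractibility caveat matches what the referenced argument requires, so there is nothing to add.
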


As an immediate consequence of Theorem \ref{mainthm} and property (1) about Rabinowitz Floer homology, we get
\begin{cor}\label{hypertight-invariance}
	$\RFH_*((M,\lambda_1);\hat{\varphi}) \, \overset{\ref{mainthm}}{\cong} \,  \RFH_*(\Sigma,\alpha_0;\hat{\varphi})   \, \overset{(1)}{\cong} \, \mathrm{H}_{*+n-1}(\Sigma).$
\end{cor}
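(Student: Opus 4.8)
The plan is to chain together the two facts already in hand, exactly as indicated by the two reference labels in the displayed formula; there is essentially nothing to prove beyond assembling them.

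For the first isomorphism I would specialise the invariance statement. Theorem~\ref{mainthm} --- or, more precisely, Proposition~\ref{Iso} together with the reduction carried out immediately after it --- provides an isomorphism $\RFH_*((M,\lambda_1);\hat{\varphi})\cong\RFH_*((M,\lambda_2);\hat{\varphi})$ for any $\lambda_2\in\Omega_\nu(\alpha_0,\alpha_2)$ with $\alpha_2\in\mathcal{C}(\xi)$ arbitrary. Choosing $\alpha_2=\epsilon\alpha_0$, one may take $\lambda_2=r\epsilon\alpha_0$ on all of $M$, so that $\RFH_*((M,\lambda_2);\hat{\varphi})$ is the Rabinowitz Floer homology of the genuine symplectisation $(\Sigma\times(0,\infty),r\epsilon\alpha_0)$; invariance under rescaling of the contact form then identifies this with $\RFH_*(\Sigma,\alpha_0;\hat{\varphi})$. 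This is the isomorphism labelled by Theorem~\ref{mainthm}.

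For the second isomorphism I would appeal directly to property~(1) recorded above: since $\alpha_0$ supports $\xi$ and has no contractible Reeb orbits, we are in the hypertight setting of Albers--Fuchs--Merry, and property~(1) asserts the canonical identification $\RFH_*(\Sigma,\alpha_0;\hat{\varphi})\cong\mathrm{H}_{*+n-1}(\Sigma;\Z_2)$. Composing the two isomorphisms yields the corollary.

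I do not anticipate a genuine obstacle: the only point that warrants a remark is that our construction of $\RFH_*(\Sigma,\alpha_0;\hat{\varphi})$ agrees with the one of \cite{AlbersFuchsMerry2013}, and this holds because when $\alpha_1=\alpha_0$ (hence $g\equiv 1$) one can choose $f(x,r)=r$ on a neighbourhood of $\Sigma\times\{1\}$ with SFT-like ends, so that $\mathcal{A}^{\hat{\varphi}}_{(\alpha_0,\alpha_0)}$ and all of its moduli spaces coincide with the perturbed Rabinowitz action functional built directly on the symplectisation of $\alpha_0$; escape of Floer trajectories to the concave end is in any case already ruled out by Proposition~\ref{prop-compactness}.
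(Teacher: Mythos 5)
Your proposal is correct and follows exactly the paper's route: the first isomorphism is Proposition~\ref{Iso} specialised to $\alpha_2=\epsilon\alpha_0$ together with rescaling invariance (this is precisely how Theorem~\ref{mainthm} is derived in the text), and the second is property~(1) from \cite{AlbersFuchsMerry2013}. Your closing remark about the agreement of the two constructions when $\alpha_1=\alpha_0$ is a sensible observation that the paper leaves implicit.
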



\begin{rem}
	Note that $\RFH_*\left(\Sigma,\alpha_0;\operatorname{\id}\right)$ is the Rabinowitz Floer homology of the actual symplectisation of $\left(\Sigma,\alpha_0\right)$, $\left(\Sigma \times (0, +\infty),\omega\right)$ equipped with the symplectic form $\omega=d(r\alpha_0)$ as defined in \cite{AlbersFuchsMerry2013}.
\end{rem}

In order to prove the second statement of Theorem \ref{translated points for hypertight}, we need the following lemma.

\begin{prop}\label{genericalpha}
	Assume $\dim(\Sigma)\geq 3$.
	Let $\mathcal{R} \subset \Sigma$ be the union of the images of all contractible closed Reeb orbits on $(\Sigma,\alpha)$.
	Then for generic $\alpha$ the set  $\mathcal{C} = \{\varphi \in \Cont_0(\Sigma) \, | \, \forall x \in \crit(\mathcal{A}^{\hat{\varphi}}_{(\alpha_0, \alpha_1)}), \,x(\frac{1}{2}) \cap \mathcal{R} = \emptyset \}$  is a residual subset in $\Cont_0(\Sigma)$.
\end{prop}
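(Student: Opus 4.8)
The plan is a two-step genericity argument. First I would pin down $\alpha$ generically so that the set $\mathcal{R}$ of images of contractible closed Reeb orbits is as tame as possible, and then, for such an $\alpha$, run a Sard--Smale argument on a universal moduli space of translated points lying on $\mathcal{R}$ in order to conclude that $\mathcal{C}$ is residual in $\Cont_0(\Sigma)$.

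\emph{Step 1 (tameness of $\mathcal{R}$).} Keeping $\alpha_0$ fixed and varying $\alpha=\alpha_1$ in the $C^\infty$ topology on $\mathcal{C}(\xi)$, it is a standard fact that for a residual set of $\alpha$ every closed Reeb orbit of $\alpha$ is nondegenerate. For such $\alpha$ there are, for each $T>0$, only finitely many closed Reeb orbits of period $\le T$, so the closed Reeb orbits form a countable collection and $\mathcal{R}=\bigcup_{i\in\N}\gamma_i$ is a countable union of embedded circles $\gamma_i\subset\Sigma$; since $\dim\Sigma\ge 3$, each $\gamma_i$ has codimension at least $2$. I then fix such an $\alpha$ once and for all. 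By Lemma \ref{crit=trans} the set $\mathcal{C}$ is precisely the set of $\varphi$ having no translated point on $\mathcal{R}$, hence $\mathcal{C}=\bigcap_{i}\mathcal{C}_i$ with $\mathcal{C}_i:=\{\varphi\in\Cont_0(\Sigma):\varphi\text{ has no translated point on }\gamma_i\}$; by Baire it suffices to show each $\mathcal{C}_i$ is residual.

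\emph{Step 2 (universal moduli space and transversality).} Fix $i$. Since $\gamma_i$ is a Reeb orbit, and hence invariant under the Reeb flow, a point $p=\gamma_i(\theta)$ is a translated point of $\varphi$ with respect to $\alpha$ if and only if $\varphi(\gamma_i(\theta))\in\gamma_i$ and $\rho_\varphi(\gamma_i(\theta))=1$, where $\varphi^*\alpha=\rho_\varphi\alpha$ (the requisite time-shift is then forced, as the Reeb time along $\gamma_i$ from $p$ to $\varphi(p)$). Working with contactomorphisms of class $C^k$ for $k$ large, I would consider the universal space
\[
\mathcal{Z}_i:=\bigl\{(\varphi,\theta)\ :\ \varphi(\gamma_i(\theta))\in\gamma_i,\ \ \rho_\varphi(\gamma_i(\theta))=1\bigr\}
\]
and show that these two equations cut it out transversally once $\varphi$ is allowed to vary. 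Perturbing $\varphi$ by $\varphi_s=\psi^s\circ\varphi$, where $\psi^s$ is the contact isotopy generated by a contact Hamiltonian $h$, one has $\frac{d}{ds}\bigl|_0\varphi_s(p)=X_h(\varphi(p))$ and $\frac{d}{ds}\bigl|_0\rho_{\varphi_s}(p)=dh(R)(\varphi(p))$ (using $\rho_\varphi(p)=1$ and that $\mathcal{L}_{X_h}\alpha$ is the multiple $dh(R)\,\alpha$ of $\alpha$). The $1$-jet of $h$ at $\varphi(p)$ is free; the vector $X_h(\varphi(p))$ depends only on $h(\varphi(p))$ and $dh|_\xi(\varphi(p))$, while $dh(R)(\varphi(p))$ is the remaining, independent jet component. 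Hence any value of $X_h(\varphi(p))\in T_{\varphi(p)}\Sigma$ and of $dh(R)(\varphi(p))\in\R$ can be realized independently, so the linearization surjects onto $\bigl(T_{\varphi(p)}\Sigma/T\gamma_i\bigr)\oplus\R$. Thus $\mathcal{Z}_i$ is a Banach manifold and the projection $\pi_i$ from $\mathcal{Z}_i$ to the space of $C^k$-contactomorphisms isotopic to the identity is Fredholm of index $1-(\dim\Sigma-1)-1=1-\dim\Sigma\le -2$.

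Since this index is negative, the Sard--Smale theorem produces a residual set of $\varphi$ over which the fibre of $\pi_i$ is empty, i.e.\ such $\varphi$ has no translated point on $\gamma_i$; the usual argument passing from $C^k$ to $C^\infty$ (intersecting over $k$ and using that $\Cont_0(\Sigma)$ with the $C^\infty$ topology is a Baire space) upgrades this to residuality of $\mathcal{C}_i$ in $\Cont_0(\Sigma)$, and the countable intersection over $i$ completes the argument. I expect the main obstacle to be the transversality claim in Step 2: one must set up the right Banach manifold of perturbations (composition with contact Hamiltonian flows) and verify surjectivity of the linearized operator, the crucial point being the independence of the jet components of the generating contact Hamiltonian that control, respectively, the displacement $X_h(\varphi(p))$ and the conformal derivative $dh(R)(\varphi(p))$; the remaining functional-analytic bookkeeping ($C^k$ versus $C^\infty$, and the second-countability hypotheses needed for Sard--Smale) is routine.
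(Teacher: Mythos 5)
Your argument is correct and is essentially the argument the paper invokes by citing Albers--Frauenfelder: first make $\alpha$ generic so that $\mathcal{R}$ becomes a countable union of embedded circles of codimension $\geq 2$, then apply Sard--Smale to the universal space of translated points lying on each such circle, whose projection to the space of contactomorphisms has negative Fredholm index $1-\dim\Sigma$. The only (harmless) imprecision is that $\mathcal{C}$ is a priori only a \emph{superset} of $\{\varphi \text{ with no translated point on } \mathcal{R}\}$, since critical points of $\mathcal{A}^{\hat{\varphi}}_{(\alpha_0,\alpha_1)}$ correspond to translated points arising from contractible loops; but a superset of a residual set is residual, so your stronger conclusion suffices.
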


The proof of the above Proposition goes as \cite[Theorem 3.3]{AlbersFrauenfelder2012b}.

\begin{proof}[Proof of Theorem \ref{translated points for hypertight}]
Let us first take any contact form $\alpha\in\mathcal{C}(\xi)$ and any contactomorphism $\varphi \in \Cont_0\left(\Sigma,\xi\right)$ and assume there is no translated point. This means that there are no critical points of the action functional which a posteriori implies that the action functional is trivially Morse. Thus, the Rabinowitz Floer homology is defined and equal to $\RFH(\Sigma, \alpha; \varphi) = 0$, which contradicts Corollary \ref{hypertight-invariance}. Hence, i) follows. 

If $\Sigma$ is one-dimensional, statement ii) is easy to prove and is left as an exercise to the reader. For nondegenerate $\varphi$ the perturbed Rabinowitz action functional is Morse-Bott and thus for generic $\varphi$, either the lower bound in ii) holds, or there is a translated point on a closed contractible Reeb orbit. Proposition \ref{genericalpha} above asserts that there is a generic set of contact forms $\alpha\in\mathcal{C}(\xi)$, such that the second alternative can be avoided by a generic $\varphi \in \Cont_0(\Sigma, \xi)$. Altogether ii) follows.  
\end{proof}



\begin{rem}
	If $\varphi$ has a translated point on a closed contractible Reeb orbit, then the loop which is given by first going along the contactomorphism $\varphi$ and then close up along the Reeb orbit is contractible. This is true because critical points of the perturbed Rabinowitz action functional are contractible.
\end{rem}

The proof of Proposition \ref{spectralcont} shows that for $\epsilon<$ smallest Reeb period of $\alpha$, then
 \newline $\RFH^{(-\epsilon,+\epsilon)}\left(\Sigma,\alpha;\hat{\varphi}\right)$ is well defined and equal to $\RFH\left(\Sigma,\alpha_0;\hat{\varphi}\right)$. Thus, if the oscillation norm of $\varphi$ is smaller than the smallest Reeb period of $\alpha$ then there is no translated point on a closed Reeb orbit which gives statement \rmnum{2}) in Remark \ref{rem:translated points}.

\begin{rem}
	We can also consider any Hamiltonian diffeomorphism $\phi^t:~M\rightarrow M$ which is supported in a neighborhood of $\Sigma$ and get the analogous result to Theorem \ref{translated points for hypertight} for leafwise intersections of $\phi^1$.
\end{rem}

\subsection{Existence of invariant Reeb orbits}

We can also use the nonvanishing of Rabinowitz Floer homology, to deduce the existence of a so called $\varphi$-invariant Reeb orbit for $\varphi\in\mathcal{S}\Cont_0\left(\Sigma,\xi\right)$ a strict contactomorphism. The details are below.

A contactomorphism $\varphi:\Sigma\rightarrow\Sigma$ is called \textbf{strict} if $\varphi^*\alpha=\alpha$. In fact, if $\varphi\in\mathcal{S}\Cont_0(\Sigma,\xi)$ is a strict contactomorphism then $\varphi$ commutes with the Reeb flow.

Moreover, a Reeb orbit $x:\R\rightarrow \Sigma$ is called \textbf{$\varphi$-invariant} if $\varphi(x(s))=x(s+\tau)$ for some $\tau\in\R\setminus\left\{0\right\}$ and all $s\in\R$.
In particular, if $\varphi$ is strict, then a translated point $x\in\Sigma$ gives rise to a $\varphi$-invariant orbit. Indeed, if $x$ is a translated point, then every point on the Reeb orbit $\left\{\left.\varphi_R^s(x)\right|\,s\in\R\right\}$ is also a translated point
\[
\varphi\left(\varphi_R^s(x)\right)=\varphi_R^s\left(\varphi(x)\right)=\varphi_R^s\left(\varphi_R^\eta(x)\right)=\varphi_R^{\eta+s}(x)=\varphi_R^\eta\left(\varphi_R^s(x)\right).
\]
Thus the Reeb orbit $\left\{\left.\varphi_R^s(x)\right|\,s\in\R\right\}$ is $\varphi$-invariant.

After the above consideration we have the following corollary of Theorem \ref{translated points for hypertight}.

\begin{cor}
 If $\varphi\in\mathcal{S}\Cont_0\left(\Sigma,\xi\right)$ is a strict contactomorphism then either there is a $\varphi$-invariant Reeb orbit (if $\tau\neq 0$) or there is a fixed point for $\varphi$ (if $\tau=0$).
\end{cor}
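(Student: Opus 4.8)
The plan is to obtain this as an immediate consequence of Theorem \ref{translated points for hypertight} i) together with the elementary fact, recorded in the paragraph preceding the statement, that a strict contactomorphism commutes with its Reeb flow.

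First I would fix any supporting contact form $\alpha \in \mathcal{C}(\xi)$ and apply Theorem \ref{translated points for hypertight} i) to the pair $(\alpha,\varphi)$. Since $(\Sigma,\xi)$ is hypertight, this produces a translated point $x \in \Sigma$ of $\varphi$ with respect to $\alpha$, that is, a point $x$ and a number $\tau \in \R$ with $\varphi(x) = \varphi_R^{\tau}(x)$; the second defining condition $\varphi^*\alpha|_x = \alpha|_x$ is automatic here because $\varphi^*\alpha = \alpha$.

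Next I would split according to the value of $\tau$, exactly as in the statement. If $\tau \neq 0$, then using $\varphi \circ \varphi_R^s = \varphi_R^s \circ \varphi$ one computes, for every $s \in \R$,
\[
\varphi(\varphi_R^s(x)) = \varphi_R^s(\varphi(x)) = \varphi_R^s(\varphi_R^{\tau}(x)) = \varphi_R^{\tau}(\varphi_R^s(x)),
\]
so the Reeb orbit $s \mapsto \varphi_R^s(x)$ is $\varphi$-invariant with shift $\tau$. If instead $\tau = 0$, then $\varphi(x) = \varphi_R^{0}(x) = x$, so $x$ is a fixed point of $\varphi$ (and, by the same commutation relation, the entire Reeb orbit through $x$ is then fixed pointwise by $\varphi$).

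There is no genuine obstacle in this last step: the whole analytic content is concentrated in Theorem \ref{translated points for hypertight} i), which itself rests on the nonvanishing $\RFH_*(\Sigma,\alpha;\varphi) \cong \mathrm{H}_{*+n-1}(\Sigma;\Z_2)$ supplied by Theorem \ref{mainthm} and Corollary \ref{hypertight-invariance}. The only point to keep in mind is that the dichotomy "$\varphi$-invariant Reeb orbit versus fixed point" is nothing but the case split $\tau \neq 0$ versus $\tau = 0$ for the shift of the translated point just constructed, so no additional argument is needed.
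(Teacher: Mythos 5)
Your proposal is correct and follows exactly the paper's argument: apply Theorem \ref{translated points for hypertight} i) to produce a translated point, observe that the condition $\varphi^*\alpha|_x=\alpha|_x$ is automatic for a strict contactomorphism, and use the commutation of $\varphi$ with the Reeb flow to show the entire Reeb orbit through that point is $\varphi$-invariant (shift $\tau\neq 0$) or pointwise fixed (shift $\tau=0$). The chain of equalities you write is the same one displayed in the paper immediately before the corollary, so nothing further is needed.
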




\subsection{Existence of non-contractible Reeb orbits}

In the following, we will explain how Theorem \ref{translated points for hypertight} implies the existence of non-contractible Reeb orbits provided there exists a positive loop of contactomorphisms of $\Sigma$.

We call a \textit{loop} of contactomorphisms a \textbf{positive loop} if the contact Hamiltonian associated to it is everywhere positive, see Definition \ref{def:contactham}.

\begin{rem}\label{rem:positive loop}
 The notion of a positive loop does not depend on the choice of contact form but only on the contact structure. This is true because the positivity of a loop $\hat{\varphi}$ is equivalent to the property that the vector field $\frac{d}{dt}\varphi_t$ should define the given coorientation of the contact structure $\xi$ and this does not depend on the choice of contact form.
\end{rem}

Most of this subsection is analogous to \cite[Section 3 and 4]{AlbersFuchsMerry2013}. The results in this section are extensions of the results \cite[Proposition 4.6]{AlbersFuchsMerry2013} in the sense that we use an arbitrary contact form.

First, we introduce the notion of spectral numbers. Recall the map from property (2) about Rabinowitz Floer homology
\[
 \iota^c_{\hat{\varphi}}:\,\RFH^c\left(\Sigma,\alpha;\hat{\varphi}\right)\rightarrow\RFH\left(\Sigma,\alpha;\hat{\varphi}\right).
\]

\begin{defn}
 Let $\hat{\varphi}\in\mathcal{P}\Cont_0\left(\Sigma,\xi\right)$ be nondegenerate. Then its \textbf{spectral number} $c\left(\hat{\varphi};\alpha\right)\in\R$ is defined as
 \[
  c\left(\hat{\varphi};\alpha\right):=\inf\left\{\left.c\in\R\right|~\left[\Sigma_{\hat{\varphi}}\right]\in\iota^c_{\hat{\varphi}}\left(\RFH_*^c\left(\Sigma,\alpha;\hat{\varphi}\right)\right)\right\}.
 \]
\end{defn}

\begin{rem}\label{rem:spectral number}
This number is not always $-\infty$. Take $\hat{\varphi}=\operatorname{\id}$ and $\alpha=\alpha_0$ the contact form which does not possess any contractible Reeb orbits. We know that $[\Sigma_{\operatorname{\id}}]=[\Sigma]\in\RFH_n\left(\Sigma,\alpha_0;\operatorname{\id}\right)\cong \mathrm{H}_{2n-1}\left(\Sigma;\Z_2\right)$ and because there are no contractible Reeb orbits this class cannot be represented by a sequence of Reeb orbits. In particular, it cannot be represented by a sequence of Reeb orbits with arbitrarily long negative period and thus $c(\operatorname{\id};\alpha_0)=0$.
\end{rem}
Actually, after the observation that the spectral number is zero for $(\operatorname{\id};\alpha_0)$ we can deduce that $c(\operatorname{\id};\alpha)=0$ for any supporting contact form. This will be the content of Corollary \ref{spectral0}.

Recall that by property (1) of Rabinowitz Floer homology for two paths of contactomorphisms, $\hat{\varphi},\hat{\psi}\in\mathcal{P}\Cont_0\left(\Sigma,\xi\right)$ we can define a map
\[
\Phi_{\hat{\varphi},\hat{\psi}}:~\RFH^c\left(\Sigma,\alpha;\hat{\varphi}\right)\rightarrow\RFH^{c+K\left(\hat{\varphi},\hat{\psi}\right)}\left(\Sigma,\alpha;\hat{\psi}\right).
\]

For $K(\hat{\varphi},\hat{\psi})$ we have the following estimate.

\begin{prop}\label{prop:Kestimate}\cite{AlbersFuchsMerry2013}
	Let $l_t$ and $k_t$ denote the contact Hamiltonians of $\hat{\varphi}$ and $\hat{\psi}$ and $C(\hat{\varphi};\alpha)$ and $C(\hat{\psi};\alpha)$ the values defined in Lemma \ref{lem:Cadmissible} equation \eqref{eq:Cadmissible}. Then,
	\begin{equation}\label{eq:Kestimate}
	K(\hat{\varphi},\hat{\psi})\leq e^{\max\left\{C(\hat{\varphi};\alpha),C(\hat{\psi};\alpha)\right\}}\int_0^1\max\left\{\max_{x\in\Sigma}\left(l_t(x)-k_t(x)\right),0\right\}\,dt.
	\end{equation}
\end{prop}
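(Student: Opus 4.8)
The plan is to run the standard energy estimate for continuation trajectories, keeping careful track of the constants. First I would fix a homotopy $\{\hat{\varphi}^s\}_{s\in\R}$ of admissible paths in $\mathcal{P}\Cont_0(\Sigma,\xi)$ with $\hat{\varphi}^s=\hat{\varphi}$ for $s\le 0$ and $\hat{\varphi}^s=\hat{\psi}$ for $s\ge 1$, chosen so that the associated contact Hamiltonians interpolate linearly,
\[
l^s_t \;=\; l_t+\zeta(s)\,(k_t-l_t),
\]
with $\zeta$ the cut-off function from Section \ref{sec:continuationmaps} (so $0\le\dot{\zeta}$ and $\int_{\R}\dot{\zeta}\,ds=1$), and extend as usual to $L^s_t(x,r)=r\,l^s_t(x)$; note that admissibility is preserved since $l_t=k_t=0$ on $[0,\tfrac12]$. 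For $\nu$ larger than $C(\hat\varphi^s;\alpha)$ for all $s$, the map $\Phi_{\hat{\varphi},\hat{\psi}}$ is induced by the $s$-dependent Floer equation for $\mathcal{A}_s:=\mathcal{A}^{\hat{\varphi}^s}_{(\alpha_0,\alpha_1)}$ (here $\lambda$, $H$, $\kappa$, $\beta_\nu$ are fixed and only $L^s$ varies), whose $s$-derivative is
\[
(\partial_s\mathcal{A}_s)(u,\eta)\;=\;-\int_{S^1}\beta_\nu(r)\,\partial_s L^s_t(u)\,dt\;=\;\dot{\zeta}(s)\int_{S^1}\beta_\nu(r)\,r\,\big(l_t(x)-k_t(x)\big)\,dt .
\]

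Next I would invoke the usual energy identity. For any continuation trajectory $(u(s),\eta(s))$ asymptotic to $(u_-,\eta_-)\in\crit(\mathcal{A}^{\hat{\varphi}}_{(\alpha_0,\alpha_1)})$ at $-\infty$ and to $(u_+,\eta_+)\in\crit(\mathcal{A}^{\hat{\psi}}_{(\alpha_0,\alpha_1)})$ at $+\infty$, differentiating $s\mapsto\mathcal{A}_s(u(s),\eta(s))$ and integrating gives
\[
\mathcal{A}^{\hat{\psi}}_{(\alpha_0,\alpha_1)}(u_+,\eta_+)-\mathcal{A}^{\hat{\varphi}}_{(\alpha_0,\alpha_1)}(u_-,\eta_-)\;=\;-E(u,\eta)+\int_{-\infty}^{+\infty}(\partial_s\mathcal{A}_s)(u(s),\eta(s))\,ds\;\le\;\int_{-\infty}^{+\infty}(\partial_s\mathcal{A}_s)(u(s),\eta(s))\,ds .
\]
Using $\dot{\zeta}\ge 0$, $\int_{\R}\dot{\zeta}=1$, the pointwise bound $\beta_\nu(r)\ge0$ together with $l_t(x)-k_t(x)\le\max\{\max_{y\in\Sigma}(l_t(y)-k_t(y)),0\}$, and an a priori bound $\beta_\nu(r)\,r\le e^{\max\{C(\hat{\varphi};\alpha),C(\hat{\psi};\alpha)\}}$ on the image of the trajectory, the right-hand side is at most $e^{\max\{C(\hat{\varphi};\alpha),C(\hat{\psi};\alpha)\}}\int_0^1\max\{\max_{x\in\Sigma}(l_t(x)-k_t(x)),0\}\,dt$. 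By Remark \ref{actionatcrit} the left-hand side equals $\eta_+-\eta_-$, so $\eta_+\le\eta_-+K(\hat{\varphi},\hat{\psi})$ with $K(\hat{\varphi},\hat{\psi})$ given by the right-hand side of \eqref{eq:Kestimate}; since the action filtration defining $\RFH^c$ is exactly the filtration by $\eta$ (Remark \ref{actionatcrit}), this says precisely that $\Phi_{\hat{\varphi},\hat{\psi}}$ raises filtration levels by at most $K(\hat{\varphi},\hat{\psi})$, so this value may be used.

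The technical heart is the a priori bound $\beta_\nu(r)\,r\le e^{\max\{C(\hat{\varphi};\alpha),C(\hat{\psi};\alpha)\}}$ on the support of the perturbation along every such continuation trajectory, and this is where I expect the real work to lie. Concretely I would first verify the comparison $C(\hat{\varphi}^s;\alpha)\le\max\{C(\hat{\varphi};\alpha),C(\hat{\psi};\alpha)\}$ for all $s$ — this amounts to tracking the ODE relating the conformal factor $\rho^s_t$ of $\hat\varphi^s$ to $l^s_t$ and its Reeb derivative and using $0\le\zeta\le1$, and is the one genuinely delicate point since $\rho^s_t$ does not interpolate linearly — and then run the maximum principle and the SFT no-escape argument of Proposition \ref{prop-compactness} and Lemma \ref{lem:Cadmissible} to confine the relevant trajectories to the region where this bound on $r$ holds, using that near the relevant hypersurfaces the geometry is unchanged up to rescaling the contact form, exactly as in the remark following Lemma \ref{fundlemma}. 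Granting this, the argument is a transcription of \cite[Proposition 4.6]{AlbersFuchsMerry2013} to the present setting.
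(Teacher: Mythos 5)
Your overall scheme (interpolate the contact Hamiltonians, run the energy identity for the $s$-dependent functional, convert the action shift into a filtration shift via Remark \ref{actionatcrit}) is the right skeleton, and you have correctly located where the difficulty sits. But the way you propose to resolve that difficulty does not work, and this is a genuine gap. Your estimate needs the pointwise bound $\beta_\nu(r)\,r\le e^{\max\{C(\hat{\varphi};\alpha),C(\hat{\psi};\alpha)\}}$ \emph{along the entire continuation trajectory}, on the region where the perturbation $\partial_sL^s$ is switched on. The maximum principle and the SFT no-escape argument of Proposition \ref{prop-compactness} only control the trajectory where it is $J$-holomorphic, i.e.\ outside $\Sigma\times(e^{-2\nu},e^{2\nu})$; inside that region the Hamiltonian terms are active and no maximum principle applies. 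Since $\beta_\nu\equiv 1$ on $(e^{-\nu},e^{\nu})$ and the construction forces $\nu>\max\{C(\hat{\varphi};\alpha),C(\hat{\psi};\alpha)\}$, the only bound these tools give is $\beta_\nu(r)\,r\le e^{2\nu}$, which is strictly weaker than the constant in \eqref{eq:Kestimate}. Only the \emph{asymptotic critical points} are confined to $r\le e^{C}$ by Lemma \ref{lem:Cadmissible}; the interior of a continuation trajectory is not.

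The way to obtain the sharp constant is to arrange that $\partial_s\mathcal{A}_s$ is only ever evaluated at critical points, not along trajectories. Concretely: the crude loop-space bound (with $e^{2\nu}$) already shows that $s\mapsto c(\hat{\varphi}^s;\alpha)$ is continuous; spectrality gives $c(\hat{\varphi}^s;\alpha)\in\spec(\hat{\varphi}^s;\alpha)$, and along a smooth family $w(s)\in\crit(\mathcal{A}_s)$ one has $\tfrac{d}{ds}\mathcal{A}_s(w(s))=(\partial_s\mathcal{A}_s)(w(s))$ because $d\mathcal{A}_s(w(s))=0$. At a critical point $r(t)\le e^{C(\hat{\varphi}^s;\alpha)}$ and $\beta_\nu(r)=1$, which is exactly where the factor $e^{\max\{C(\hat{\varphi};\alpha),C(\hat{\psi};\alpha)\}}$ comes from; integrating the resulting a.e.\ derivative bound over $s$ gives \eqref{eq:Kestimate}. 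This is the argument of \cite[Proposition 4.6]{AlbersFuchsMerry2013}, which the paper simply cites. Two further points you flag but leave open also need attention: (i) the comparison $C(\hat{\varphi}^s;\alpha)\le\max\{C(\hat{\varphi};\alpha),C(\hat{\psi};\alpha)\}$ is not automatic for the linear interpolation of contact Hamiltonians, since $\rho_t^s$ does not interpolate linearly — without it you only get $\sup_s C(\hat{\varphi}^s;\alpha)$ in the exponent; and (ii) as stated, the critical-point argument bounds the shift of the \emph{spectral number} rather than the filtration shift of the chain-level map $\Phi_{\hat{\varphi},\hat{\psi}}$, which is all the paper ever uses $K$ for (Proposition \ref{properties-spectral}), but you should say which of the two you are actually proving.
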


We can estimate the difference between spectral numbers of two nondegenerate paths via this number $K$.

\begin{prop}\label{properties-spectral}
	Let $\hat{\varphi},\,\hat{\psi}\in\mathcal{P}\Cont_0\left(\Sigma,\xi\right)$ be two nondegenerate paths of contactomorphisms and denote again by $l_t$ and $k_t$ their contact Hamiltonians. Then we can estimate
	\[
	c(\hat{\psi};\alpha)\leq c\left(\hat{\varphi};\alpha\right)+K(\hat{\varphi},\hat{\psi}).
	\]
	
	 Furthermore, we have
	 \[
	 l_t(x)\leq k_t(x),\quad\forall x\in\Sigma,\,0\leq t\leq 1\quad\Rightarrow\quad c(\hat{\varphi};\alpha)\geq c(\hat{\psi};\alpha).
	 \]
	\end{prop}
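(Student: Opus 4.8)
The plan is to deduce everything formally from the two structural properties of the filtered Rabinowitz Floer homology recalled above: by property (2) the continuation homomorphism $\Phi_{\hat{\varphi},\hat{\psi}}$ respects the action filtration up to the shift $K(\hat{\varphi},\hat{\psi})$, giving a map $\Phi_{\hat{\varphi},\hat{\psi}}\colon \RFH^c_*(\Sigma,\alpha;\hat{\varphi})\to\RFH^{c+K}_*(\Sigma,\alpha;\hat{\psi})$ for every $c$, and by property (1) the induced map on full homology sends $[\Sigma_{\hat{\varphi}}]$ to $[\Sigma_{\hat{\psi}}]$. First I would record that, since both versions of $\Phi_{\hat{\varphi},\hat{\psi}}$ come from the same chain-level continuation map, the square with horizontal arrows $\Phi_{\hat{\varphi},\hat{\psi}}$ and vertical arrows the inclusion-induced maps $\iota^c_{\hat{\varphi}}$ and $\iota^{c+K}_{\hat{\psi}}$ commutes. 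I would also note that the set $\{c\in\R \mid [\Sigma_{\hat{\varphi}}]\in\iota^c_{\hat{\varphi}}(\RFH^c_*(\Sigma,\alpha;\hat{\varphi}))\}$ is upward closed, because the $\iota^c_{\hat{\varphi}}$ factor through one another along the inclusions $\RFH^c_*\hookrightarrow\RFH^{c'}_*$ for $c\le c'$; this is what makes the infimum in the definition of the spectral number behave well.

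For the first inequality I would fix $c>c(\hat{\varphi};\alpha)$, choose $x_c\in\RFH^c_*(\Sigma,\alpha;\hat{\varphi})$ with $\iota^c_{\hat{\varphi}}(x_c)=[\Sigma_{\hat{\varphi}}]$ (possible by the upward closedness), and apply $\Phi_{\hat{\varphi},\hat{\psi}}$. Commutativity of the square together with $\Phi_{\hat{\varphi},\hat{\psi}}([\Sigma_{\hat{\varphi}}])=[\Sigma_{\hat{\psi}}]$ then exhibit $[\Sigma_{\hat{\psi}}]=\iota^{c+K}_{\hat{\psi}}(\Phi_{\hat{\varphi},\hat{\psi}}(x_c))$ as an element of $\iota^{c+K}_{\hat{\psi}}(\RFH^{c+K}_*(\Sigma,\alpha;\hat{\psi}))$, so that $c(\hat{\psi};\alpha)\le c+K(\hat{\varphi},\hat{\psi})$. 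Taking the infimum over all such $c$ gives $c(\hat{\psi};\alpha)\le c(\hat{\varphi};\alpha)+K(\hat{\varphi},\hat{\psi})$.

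For the monotonicity statement I would feed the hypothesis $l_t(x)\le k_t(x)$ directly into the estimate \eqref{eq:Kestimate} of Proposition \ref{prop:Kestimate}: the integrand $\max\{\max_{x\in\Sigma}(l_t(x)-k_t(x)),0\}$ vanishes for every $t$, hence $K(\hat{\varphi},\hat{\psi})\le 0$, and the first inequality then yields $c(\hat{\psi};\alpha)\le c(\hat{\varphi};\alpha)$, i.e. $c(\hat{\varphi};\alpha)\ge c(\hat{\psi};\alpha)$.

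I do not expect a serious obstacle: all the analysis — compactness and the $\eta$- and energy-estimates needed to construct the continuation maps $\Phi_{\hat{\varphi},\hat{\psi}}$ and to establish their behaviour with respect to the action filtration — has already been carried out, so the present statement is a purely homological-algebra consequence. The only points requiring a moment's care are the commutativity of the defining square, which must be checked already on the chain level (where it is essentially tautological, the filtered and unfiltered continuation maps being restrictions of one another), and the upward closedness used to select the lift $x_c$ and to pass to the infimum.
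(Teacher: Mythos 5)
Your proposal is correct and follows essentially the same route as the paper, which simply refers to the analogous argument in \cite[Proposition 4.2]{AlbersFuchsMerry2013}: one pushes a filtered representative of $[\Sigma_{\hat{\varphi}}]$ through the continuation map $\Phi_{\hat{\varphi},\hat{\psi}}$, uses the action shift $K(\hat{\varphi},\hat{\psi})$ from property (2) together with $\Phi_{\hat{\varphi},\hat{\psi}}([\Sigma_{\hat{\varphi}}])=[\Sigma_{\hat{\psi}}]$, and then specialises via the estimate \eqref{eq:Kestimate} to get monotonicity. The points you flag for care (commutativity of the filtration square and upward closedness of the set defining the infimum) are exactly the right ones and are handled as you describe.
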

	
	The proof of Proposition \ref{properties-spectral} is analogous to the proof in \cite[Proposition 4.2]{AlbersFuchsMerry2013}.

\begin{rem}
	   It is possible to extend $c$ to all of $\mathcal{P}\Cont_0\left(\Sigma,\xi\right)$  via the limit of nondegenerate paths in a unique manner and such that all the previously mentioned properties are still satisfied. 
\end{rem}

We equip  $\mathcal{P}\Cont_0\left(\Sigma,\xi\right)$ with the $C^2$-topology. For fixed $\alpha\in\mathcal{C}(\xi)$ the continuity of the map 
\begin{align*}
c:\,\mathcal{P}\Cont_0\left(\Sigma,\xi\right)&\rightarrow\R\\
\hat{\varphi}&\mapsto c(\hat{\varphi};\alpha)
\end{align*}
was proved in \cite[Lemma 4.3]{AlbersFuchsMerry2013}. We prove the following extension.

\begin{prop}\label{spectralcont}
	Let $\alpha\in\mathcal{C}(\xi)$. For any path $\hat{\varphi}$ of contactomorphisms, the map
	\begin{align*}
		(\hat{\varphi};\alpha)&\mapsto c(\hat{\varphi};\alpha)
	\end{align*}
	is continuous, were $\mathcal{C}(\xi)$ is equipped with the natural $C^0$-topology.
\end{prop}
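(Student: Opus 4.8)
The plan is to upgrade the continuity in $\hat\varphi$ (already established in \cite[Lemma 4.3]{AlbersFuchsMerry2013}) to joint continuity in the pair $(\hat\varphi;\alpha)$ by exploiting the two monotonicity/Lipschitz estimates already at our disposal: Proposition \ref{properties-spectral} together with the estimate on $K(\hat\varphi,\hat\psi)$ from Proposition \ref{prop:Kestimate}, and the invariance isomorphism of Proposition \ref{Iso}. First I would fix $(\hat\varphi_0;\alpha_0)$ and a sequence (or net) $(\hat\varphi_k;\alpha_k)\to(\hat\varphi_0;\alpha_0)$, and estimate $|c(\hat\varphi_k;\alpha_k)-c(\hat\varphi_0;\alpha_0)|$ by a triangle inequality through the intermediate point $(\hat\varphi_0;\alpha_k)$: the term $|c(\hat\varphi_k;\alpha_k)-c(\hat\varphi_0;\alpha_k)|$ is controlled by the $\hat\varphi$-continuity, but one must check this is uniform in $\alpha_k$; the term $|c(\hat\varphi_0;\alpha_k)-c(\hat\varphi_0;\alpha_0)|$ is the genuinely new content, the continuity in the contact form.

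For the $\hat\varphi$-part, I would revisit the argument of \cite[Lemma 4.3]{AlbersFuchsMerry2013}: that proof runs through the estimate in Proposition \ref{properties-spectral}, namely $|c(\hat\varphi;\alpha)-c(\hat\psi;\alpha)|\le \max\{K(\hat\varphi,\hat\psi),K(\hat\psi,\hat\varphi)\}$, combined with the bound \eqref{eq:Kestimate} on $K$ in terms of the oscillation $\int_0^1\max_x(l_t(x)-k_t(x))\,dt$ of the difference of contact Hamiltonians and the constant $e^{\max\{C(\hat\varphi;\alpha),C(\hat\psi;\alpha)\}}$. Since $\hat\varphi_k\to\hat\varphi_0$ in $C^2$, the contact Hamiltonians with respect to a fixed $\alpha$ converge in $C^0$, so the oscillation term tends to $0$; and as $\alpha_k\to\alpha_0$ in $C^0$ the quantities $C(\hat\varphi_0;\alpha_k)$ and the conversion factor relating $l_t$ computed with respect to $\alpha_k$ versus $\alpha_0$ stay in a bounded range. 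Hence the exponential prefactor is uniformly bounded over the net, giving the desired uniformity.

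For the genuinely new part, continuity of $\alpha\mapsto c(\hat\varphi_0;\alpha)$, the key is that $c(\hat\varphi_0;\alpha)$ measures the filtration level at which the class $[\Sigma_{\hat\varphi_0}]$ becomes visible, and under the continuation isomorphism of Proposition \ref{Iso} this class is preserved (the continuation maps respect the distinguished classes $[\Sigma_{\hat\varphi}]$, being built from the canonical $\Phi_{\hat\varphi,\hat\psi}$'s and the $\alpha$-invariance maps). The action filtration shifts under a homotopy $\lambda_s$ from $\lambda_1\in\Omega_\nu(\alpha_0,\alpha)$ to $\lambda_2\in\Omega_\nu(\alpha_0,\alpha')$ by an amount controlled by $\int_0^1\int_{S^1}(u(s))^*(\partial_s\lambda_s)\,ds$, which by the computation in the proof of Lemma \ref{energybound} is bounded by $\widetilde C$ times an expression involving the energy and $\|\eta\|_\infty$; writing $\lambda_s=f(s,x,r)\alpha_0$ this is in turn bounded by a constant multiple of $\|\partial_sf/f\|_\infty$, i.e.\ by a quantity that is $O(\|\alpha-\alpha'\|_{C^0})$ (plus the negligible $\epsilon$-error from $J_s$ not being exactly SFT-like near $\Sigma\times\{1\}$, exactly as in Section \ref{sec:invariance}). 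Running the continuation map and its inverse and tracking the filtration therefore yields $|c(\hat\varphi_0;\alpha)-c(\hat\varphi_0;\alpha')|\le \mathrm{const}\cdot\|\alpha-\alpha'\|_{C^0}$ on compact families, which gives the continuity. I expect the main obstacle to be precisely this last bookkeeping: making the filtration shift under the continuation isomorphism quantitatively depend on $\|\alpha-\alpha'\|_{C^0}$, since one must ensure the subdivision into small homotopies $\{(\lambda_s^j,J_s^j)\}$ required by Theorem \ref{continuationcompactness} can be taken with controlled total filtration shift, and that the $\epsilon$-error coming from the almost complex structure not being SFT-like in a neighborhood of $\Sigma\times\{1\}$ can be absorbed uniformly.
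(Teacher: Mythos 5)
Your proposal follows essentially the same route as the paper's proof in Appendix \ref{fullproof}: reduce to continuity in $\alpha$ for fixed $\hat{\varphi}$ (citing \cite{AlbersFuchsMerry2013} and Propositions \ref{properties-spectral}, \ref{prop:Kestimate} for the $\hat{\varphi}$-direction), then control the action-filtration shift of the continuation maps $\Phi$, $\Psi$ with $\Psi\circ\Phi=\operatorname{\id}$ via the estimate on $\int_0^1\int_{S^1}(u(s))^*(\partial_s\lambda_s)\,ds$ from Lemma \ref{energybound}, which is small when $\|\alpha-\alpha'\|_{C^0}$ is. The only refinement the paper makes that you should keep in mind is that the resulting bound on the shift involves $\max\{|a_+|,|a_-|\}$, so the admissible $\delta$ depends on the action level near $c(\hat{\varphi};\alpha)$ (whence the case analysis with $\widetilde{\delta}(a,\epsilon)$ in the appendix); your "on compact families" qualifier covers this, and no subdivision of the homotopy is needed since one only compares nearby contact forms directly.
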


As a corollary of Proposition \ref{spectralcont} we have

\begin{cor}\label{spectral0}
	For $\alpha\in\mathcal{C}(\xi)$ for $\xi$ a hypertight contact structure we have
	\[
		c(\operatorname{\id},\alpha)=0.
	\]
\end{cor}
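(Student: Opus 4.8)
The plan is to deduce $c(\operatorname{\id},\alpha) = 0$ for an arbitrary supporting contact form $\alpha$ from the already-established fact (Remark \ref{rem:spectral number}) that $c(\operatorname{\id},\alpha_0) = 0$ for the hypertight form $\alpha_0$, using the continuity statement of Proposition \ref{spectralcont} together with the monotonicity properties of the spectral number. First I would reduce to showing $c(\operatorname{\id},\alpha) \le 0$ and $c(\operatorname{\id},\alpha) \ge 0$ separately. For the upper bound, recall that by Corollary \ref{hypertight-invariance} the class $[\Sigma_{\operatorname{\id}}] = [\Sigma]$ is nonzero in $\RFH_n(\Sigma,\alpha;\operatorname{\id})$ for \emph{every} $\alpha$, and $c(\operatorname{\id},\alpha)$ is by definition the infimum of the $c$ for which this class lies in the image of $\iota^c_{\operatorname{\id}}$; this infimum is finite precisely because the homology is nonzero.

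The heart of the argument is a scaling plus continuity trick. Consider the family $\alpha_\tau := (1-\tau)\alpha + \tau\,\epsilon\alpha_0$ interpolating (through supporting contact forms) between $\alpha$ and the rescaled hypertight form $\epsilon\alpha_0$; by Proposition \ref{spectralcont} the function $\tau \mapsto c(\operatorname{\id};\alpha_\tau)$ is continuous on $[0,1]$. Since Rabinowitz Floer homology, and with it the spectral number, is invariant under rescaling of the contact form, $c(\operatorname{\id};\epsilon\alpha_0) = c(\operatorname{\id};\alpha_0) = 0$. This shows $c(\operatorname{\id};\alpha)$ is finite, but to pin it to $0$ I would instead exploit the rescaling invariance directly: for any $\delta>0$ the form $\delta\alpha$ is again supporting and $c(\operatorname{\id};\delta\alpha) = c(\operatorname{\id};\alpha)$. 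On the other hand, the monotonicity in Proposition \ref{properties-spectral} applied to the contact Hamiltonians of $\operatorname{\id}$ relative to $\alpha$ and relative to $\delta\alpha$ — or more precisely, the estimate of Proposition \ref{prop:Kestimate} with $\hat\varphi = \hat\psi = \operatorname{\id}$, which gives $K(\operatorname{\id},\operatorname{\id}) = 0$ — forces the spectral number to be stable. Combining rescaling invariance with continuity along the path $\alpha_\tau$ and the boundary value $0$ at the hypertight end yields $c(\operatorname{\id};\alpha) = c(\operatorname{\id};\alpha_0) = 0$.

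The step I expect to be the main obstacle is justifying that the continuity of Proposition \ref{spectralcont} can be applied along a path of contact forms that genuinely \emph{passes through} forms with contractible Reeb orbits: one must know that $\RFH(\Sigma,\alpha_\tau;\operatorname{\id})$ and the distinguished class $[\Sigma_{\operatorname{\id}}]$ are defined and nonzero for every $\tau$ (this is Theorem \ref{mainthm}/Corollary \ref{hypertight-invariance}) and that the spectral number varies continuously — which is exactly the content of Proposition \ref{spectralcont}, but one should check that the $C^0$-continuity there is strong enough to handle this one-parameter family without the spectral number jumping. Once that is in place, the remaining steps are formal: rescaling invariance is immediate from the construction, and the identification of the limiting value with $c(\operatorname{\id};\alpha_0) = 0$ is Remark \ref{rem:spectral number}.
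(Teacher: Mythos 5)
Your overall strategy --- transporting the value $c(\operatorname{id};\alpha_0)=0$ along a path of supporting contact forms to an arbitrary $\alpha$ --- is the same as the paper's, but your proposal is missing the one ingredient that makes the transport work. Continuity of $\tau\mapsto c(\operatorname{id};\alpha_\tau)$ (Proposition \ref{spectralcont}) together with the boundary value $0$ at the hypertight end says nothing about the value at the other end: a continuous function on $[0,1]$ vanishing at $\tau=1$ can take any value at $\tau=0$. The devices you invoke to ``pin'' the value do not close this gap. Proposition \ref{properties-spectral} and Proposition \ref{prop:Kestimate} compare the spectral numbers of two different \emph{paths of contactomorphisms} for one \emph{fixed} contact form; with $\hat\varphi=\hat\psi=\operatorname{id}$ they only yield the tautology $c(\operatorname{id};\alpha)\le c(\operatorname{id};\alpha)$ and never relate $c(\operatorname{id};\alpha)$ to $c(\operatorname{id};\alpha_0)$. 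Likewise rescaling invariance only identifies $c(\operatorname{id};\delta\alpha)$ with $c(\operatorname{id};\alpha)$ and cannot reach $\alpha_0$.

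What is actually needed is a rigidity statement along the homotopy, and the paper supplies it in two pieces: Lemma \ref{T} produces a uniform $T>0$ such that no $\alpha_s$ in the homotopy has a nonconstant contractible closed Reeb orbit of period in $(-T,T)$ (a uniform spectral gap around $0$), and a quantitative claim, proved with the energy estimates of Lemma \ref{energybound}, shows that for a sufficiently slow (subdivided) homotopy the continuation maps cannot move the action level of the class $[\Sigma]$ across this gap, so the class stays pinned at level $0$ from $\alpha_0$ all the way to $\alpha$. Equivalently one could combine Proposition \ref{spectralcont} with the fact that $c(\operatorname{id};\alpha_\tau)$ always lies in the action spectrum (property (a)) and with Lemma \ref{T}: then $\{\tau\mid c(\operatorname{id};\alpha_\tau)=0\}$ is open and closed in $[0,1]$. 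Either way, the spectral gap of Lemma \ref{T} is indispensable and is absent from your argument. A smaller point: nonvanishing of $\RFH(\Sigma,\alpha;\operatorname{id})$ only shows that the defining infimum is taken over a nonempty set, i.e.\ $c<+\infty$; ruling out $c=-\infty$, and a fortiori $c<0$, is exactly what the gap argument accomplishes, so it cannot be taken for granted at the outset.
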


	Proposition \ref{properties-spectral} together with Corollary \ref{spectral0} shows that $c(\hat{\varphi};\alpha)$ is finite for every nondegenerate $\hat{\varphi}$ and $\alpha$.

The full proof of Proposition \ref{spectralcont} can be found in the Appendix \ref{fullproof}. Here, we only give a proof of Corollary \ref{spectral0} for which we need the following lemma, which gives a lower bound on the periods of all non-constant Reeb orbits appearing during a homotopy of supporting contact forms.




\begin{lem}\label{T}
Let $\alpha_1,\alpha_2\in\mathcal{C}(\xi)$ and let $\alpha_s , \,  s\in [0,1]$, be a homotopy of supporting contact forms between $\alpha_0$ and $\alpha_1$. For $s \in [0,1]$, denote by $\mathcal{P}_s \subset \R\setminus\{0\}$ the set of periods of closed orbits of the Reeb vector field $R_{\alpha_s}$. And set $T_s := \inf\{ |\eta| \, | \, \eta  \in \mathcal{P}_s\}$. Then $T := \inf\{T_s \, | \, s \in [0,1]\} > 0$.

\end{lem}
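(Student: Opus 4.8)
The plan is to argue by contradiction, using the compactness of the space of all Reeb vector fields appearing in a smooth homotopy of contact forms together with the fact that Reeb orbits of a fixed contact form have periods bounded away from zero. Suppose $T = \inf\{T_s \mid s \in [0,1]\} = 0$. Then there is a sequence $s_k \in [0,1]$ and closed Reeb orbits $\gamma_k$ of $R_{\alpha_{s_k}}$ with periods $\eta_k \to 0$. Passing to a subsequence, we may assume $s_k \to s_\infty \in [0,1]$, and since $\{\alpha_s\}_{s\in[0,1]}$ is a smooth family of $1$-forms on the compact manifold $\Sigma$, the Reeb vector fields $R_{\alpha_s}$ vary smoothly (and hence $C^1$-continuously) in $s$, so $R_{\alpha_{s_k}} \to R_{\alpha_{s_\infty}}$ in the $C^1$-topology on the space of vector fields on $\Sigma$.

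The key step is then a standard quantitative statement: for a single smooth vector field $X$ on a closed manifold, there is a lower bound $T(X) > 0$ on the periods of its nonconstant periodic orbits, and this bound is \emph{locally uniform} in $X$ with respect to the $C^1$-topology. Concretely, if $\|X\|_{C^1} \le \Lambda$ on all of $\Sigma$, then any nonconstant periodic orbit of $X$ has period at least $c(\Lambda) > 0$: this follows from the Picard–Lindelöf / Gr\"onwall estimate, since the time-$t$ flow of $X$ moves points a distance that is controlled linearly in $t$ for small $t$ (using a Lipschitz constant for $X$ coming from the $C^1$-bound), so a trajectory cannot return to its starting point before a definite time has elapsed. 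Applying this with $\Lambda := 1 + \sup_{s\in[0,1]} \|R_{\alpha_s}\|_{C^1}$, which is finite by compactness of $[0,1]$ and smoothness of the family, we get a uniform lower bound $c(\Lambda) > 0$ on the periods of all nonconstant closed Reeb orbits of $R_{\alpha_{s_k}}$ for all large $k$. This contradicts $\eta_k \to 0$, and proves $T > 0$.

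The main obstacle — really the only nontrivial point — is justifying the locally uniform period lower bound, i.e.\ that the constant $c(\Lambda)$ depends only on the $C^1$-size of the vector field and the geometry of $\Sigma$, not on the individual field. This is where one uses a fixed background metric on $\Sigma$ (with injectivity radius bounded below by compactness): cover $\Sigma$ by finitely many charts, express $X$ in coordinates, and apply the Gr\"onwall inequality $\mathrm{dist}(\phi^t_X(p), p) \le C_1 |t| e^{C_2|t|}$ with $C_1, C_2$ depending only on $\sup\|X\|$ and a Lipschitz constant for $X$. A nonconstant orbit must leave the chart or else return; in either case the displacement is bounded below by a positive constant depending only on $\Lambda$ and the chart geometry, forcing the period below. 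Everything else — extracting the convergent subsequence, invoking smoothness of $s \mapsto R_{\alpha_s}$, and deriving the contradiction — is routine.
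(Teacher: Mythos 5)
Your overall architecture matches the paper's: reduce to a period lower bound for a single Reeb field that is locally uniform in the parameter, then conclude by compactness of $[0,1]$. The gap is in your justification of the key lower bound itself. The Gr\"onwall/Picard--Lindel\"of estimate $\dist(\phi_X^t(p),p)\le C_1|t|e^{C_2|t|}$ is an \emph{upper} bound on displacement; it shows that a short orbit stays near its starting point, but it cannot by itself prevent the orbit from closing up --- a periodic orbit has displacement exactly zero at its period, which is consistent with any upper bound. So the inference ``a trajectory cannot return to its starting point before a definite time has elapsed'' does not follow from the estimate you invoke, and in your final paragraph the dichotomy ``leave the chart or else return'' collapses in the second case, where the displacement is $0$, not bounded below. (The statement you want --- a period bound depending only on a $C^1$/Lipschitz bound --- is in fact true, by Yorke's inequality $T\ge 2\pi/L$, but its proof is a Wirtinger-inequality argument, not Gr\"onwall.)

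The ingredient you are missing, and the one the paper's proof is built on, is that Reeb vector fields are \emph{non-vanishing}: by compactness of $\Sigma\times[0,1]$ one has $\mu:=\inf_{s,x}|R_{\alpha_s}(x)|>0$. Then in a chart around $p$, for $0<\tau\le\tau_0$ (with $\tau_0$ depending only on $\mu$, the uniform modulus of continuity of the family, and the chart geometry) the velocity $X(\gamma(t))$ stays in a narrow cone around $X(p)\neq 0$, so $\gamma(\tau)-\gamma(0)=\int_0^\tau X(\gamma(t))\,dt\neq 0$ and the orbit cannot close up; equivalently, $\phi_X^{\tau}(U_p)\cap U_p=\emptyset$ for a small neighborhood $U_p$, which is exactly the separation statement the paper establishes and then propagates to nearby $s$ by continuity before covering $\Sigma$ and $[0,1]$ finitely. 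Once you insert non-vanishing (or replace your Gr\"onwall step by a genuine Yorke-type argument), the rest of your proof --- the contradiction via $s_k\to s_\infty$ and the uniform bound --- is routine and coincides with the paper's.
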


\begin{proof}
Let $V$ be a non-vanishing vector field on a compact manifold $M$ and let $p \in M$. One can choose $\tau_p > 0$ and a neighborhood $U_p$ of $p$, such that the flow $\phi_V^t$ of $V$ satisfies $\phi_V^{\tau_p}(U_p) \cap U_p = \emptyset$. 
Now let the vector field $V_s$ depend continuously on a parameter $s$. Fix $s_0$ and choose as before $\tau_p$ and $U_p$ for the vector field $V_{s_0}$. Then for a sufficiently small interval $I_p$ containing $s_0$ it still holds that $\phi_{V_s}^{\tau_p}(U_p) \cap U_p = \emptyset$ for all $s \in I_p$. Choose a finite covering of $M$ by the sets $U_p$. We have a corresponding finite intersection $I_{s_0} = \bigcap I_p \neq \emptyset$ and a corresponding infimum $\tau_{s_0} = \inf{\tau_p} > 0$.  Then the smallest period of a closed orbit of every ${V_s}, \, s\in I$, is bounded from below by $\tau_{s_0}$.

Applying this to the family of Reeb vector fields $R_{\alpha_s}$, one finds for every $s_0 \in [0,1]$ a small interval $I_{s_0}$ containing $s_0$, such that $T_{s_0}=\inf\{T_s \,|\, s \in I_{s_0}\} >0$. Since $[0,1] = \bigcup_{k=1}^n I_{s_k}$ for some $s_k \in [0,1]$, $n \in \mathbb{N}$,  we conclude $T = \inf_{1 \leq k \leq n}T_{s_k} > 0$.
\end{proof}


Now, we prove Corollary \ref{spectral0}.

\begin{proof}[Proof of {\color{blue} Corollary \ref{spectral0}}]
		
	Let $\alpha_0$ be as above, $\lambda_0=r\alpha_0$. Let $\lambda \in \Omega_\nu(\alpha_0, \alpha_1)$ for some suitable $\nu > 0$.  Let $\lambda_s$ be a homotopy between $\lambda_0$ and $\lambda$ of the type considered before.  
	As in Lemma \ref{T} denote by $T>0$ the smallest period of a closed Reeb orbit for the homotopy $\alpha_s = \lambda_s|_{\Sigma \times \{1\}}$.

	As explained in Section \ref{sec:continuationmaps}, for fixed $s_0, s_1 \in [0,1]$ we can choose a reparametrized homotopy $\widetilde{\lambda}_s$ between $\lambda_{s_0}$ and $\lambda_{s_1}$ such that 
	\[
	\widetilde{C} := \sup_{s\in[0,1]}\sup_{\Sigma \times (0, e^{2\nu}]}\|\partial_s\widetilde{\lambda}_s\|_{J_s} < 2|s_1-s_0|\sup_{s\in[0,1]}\sup_{\Sigma \times (0, e^{2\nu}]}\|\partial_s\lambda_s\|_{J_s}.
	\]
	
	We can apply an adiabatic argument for this homotopy to get uniform bounds on $\eta$,  as in Lemma \ref{energybound},  by taking a small enough partition of $[0,1]$.  An analogous argument applies here.  
	
	We first show the following. 
	
	\begin{claim}
		Let $C_0$ and $C_1$ as in Lemma \ref{sfundlemma}.
		There is a constant $c_T$ depending on the original homotopy $\lambda_s$, $C_0$, $C_1$ and $T$, such that if $\widetilde{C} < c_T$, then
		
		\begin{align}\label{claim1}
		\mathcal{M}_{0}^{a_+}(\mathcal{A}^{\operatorname{\id}}_{\widetilde{\lambda}_s}) = \emptyset, \quad \text{ if } \quad a_+ \geq T
		\end{align}
		and
		\begin{align}\label{claim2}
		\mathcal{M}_{a_-}^{0}(\mathcal{A}^{\operatorname{\id}}_{\widetilde{\lambda}_s}) = \emptyset, \quad \text{ if } \quad a_- \leq -T.
		\end{align}
	\end{claim}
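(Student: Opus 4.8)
\medskip

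The plan is to argue by contradiction and to reduce everything to the statement that the ``action defect'' of the reparametrised homotopy is strictly less than $T$. Suppose $\mathcal{M}_{0}^{a_+}(\mathcal{A}^{\operatorname{\id}}_{\widetilde{\lambda}_s})\neq\emptyset$ for some $a_+\geq T$, and pick an element $(u(s),\eta(s))$ with asymptotic action values $v_\pm:=\lim_{s\to\pm\infty}\mathcal{A}^{\operatorname{\id}}_{\widetilde{\lambda}_s}(u(s),\eta(s))$; by the definition of the moduli space, $v_-\leq 0$ and $v_+\geq a_+\geq T$, so $v_+-v_-\geq T$. Since $\hat{\varphi}=\operatorname{\id}$ forces $L\equiv 0$, we have $X_L\equiv 0$, so every term involving $X_L$ in the estimates below drops out. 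The quantity to control is
\[
I:=\int_0^1\Big|\int_{S^1}(u(s))^*(\partial_s\widetilde{\lambda}_s)\Big|\,ds,
\]
and the aim is to show that $I<T$ as soon as $\widetilde{C}$ lies below a suitable threshold $c_T$ --- which contradicts $v_+-v_-\geq T$, as the next paragraphs explain. The other case, $\mathcal{M}_{a_-}^{0}=\emptyset$ for $a_-\leq-T$, is symmetric: there $v_-\leq -T$ and $v_+\geq 0$, so again $v_+-v_-\geq T$, and the same chain of inequalities applies verbatim.

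First I would invoke the energy identity derived at the start of the proof of Lemma~\ref{energybound}, which in our situation reads $E(u(s),\eta(s))=v_--v_++\int_0^1\int_{S^1}(u(s))^*(\partial_s\widetilde{\lambda}_s)\,ds$. Since $E\geq 0$, the integral on the right is $\geq v_+-v_-\geq T$, so $T\leq v_+-v_-\leq I$ and $E\leq-(v_+-v_-)+I\leq I$. Feeding $v_-\leq 0\leq T\leq v_+$ and $v_+-v_-\leq I$ into elementary bookkeeping gives $|v_+|=v_+\leq I$ and $|v_-|=-v_-\leq I-v_+\leq I$, hence $\max\{|v_-|,|v_+|\}\leq I$.

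Next I would recycle, with $X_L\equiv 0$, three estimates already established inside the proof of Lemma~\ref{energybound}; each of them involves only the genuine asymptotic values of the trajectory, not the numbers $a_\pm$. These are: the action bound \eqref{eq:A}, in the form $|\mathcal{A}^{\operatorname{\id}}_{\widetilde{\lambda}_s}(u(s),\eta(s))|\leq\max\{|v_-|,|v_+|\}+I$; the bound $\|\eta\|_\infty\leq C_1\big(\sup_s|\mathcal{A}^{\operatorname{\id}}_{\widetilde{\lambda}_s}(u(s),\eta(s))|+1\big)+E/C_0$, obtained from \eqref{fundbound}, the definition of $\tau(s)$ and H\"older's inequality; and the defect bound $I\leq\widetilde{C}\,(E+1+c_H\|\eta\|_\infty)$, where $c_H:=\sup_{s\in[0,1]}\sup_M\|X^{\lambda_s}_H\|_{J_s}<\infty$ (for the original homotopy) bounds $\|X^{\widetilde{\lambda}_s}_H\|$ for every reparametrisation. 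Inserting $\max\{|v_-|,|v_+|\}\leq I$ and $E\leq I$ into these three inequalities collapses them to a single estimate $I\leq\widetilde{C}(PI+Q)$ with constants $P,Q>0$ depending only on $c_H$, $C_0$ and $C_1$. Hence, once $\widetilde{C}P<\frac12$, one has $I\leq 2\widetilde{C}Q$, and choosing $c_T:=\min\big\{1/(2P),\,T/(2Q)\big\}$ forces $I<T$ whenever $\widetilde{C}<c_T$ --- the contradiction sought. This $c_T$ depends only on the original homotopy $\lambda_s$ (through $c_H$), on $C_0$, $C_1$ and on $T$, as required.

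The main obstacle --- and the reason $c_T$ cannot simply be quoted from Lemma~\ref{energybound} --- is that the bounds $C_\eta,C_E$ produced there grow with $|a_\pm|$, whereas $a_+$ is unbounded in the present statement. The way around it is precisely the first paragraph's observation: once the jump $v_+-v_-$ is pinned between $T$ and $I$, the individual action values $|v_\pm|$ are themselves controlled by $I$, so the (otherwise circular) estimates close up in the single unknown $I$, with constants that depend neither on $a_\pm$ nor on the particular reparametrisation. The remaining points --- that the estimates in the proof of Lemma~\ref{energybound} go through verbatim with $X_L\equiv 0$, and that $c_H<\infty$ (the Reeb vector fields occurring in the homotopy are uniformly bounded over the compact parameter interval, and $X_H$ is supported on the compact $r$-range where $H'\neq 0$) --- are routine.
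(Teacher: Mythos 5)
Your argument is correct and follows essentially the same route as the paper: both recycle the energy identity, the action bound, the $\eta$-estimate and the defect bound from the proof of Lemma \ref{energybound}, exploit that one asymptotic level is pinned at $0$ so that $\max\{|v_-|,|v_+|\}$ is controlled by the action jump, and close the resulting circular estimate by taking $\widetilde{C}$ small. The only difference is bookkeeping: you make the defect $I$ the single unknown and derive $I\leq\widetilde{C}(PI+Q)$, while the paper keeps $a_+-a_-$ as the central quantity and derives $\tfrac34(a_+-a_-)<\tfrac12 T+\tfrac14\max\{|a_+|,|a_-|\}$ before specializing to $a_-=0$ or $a_+=0$.
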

	
	
	
	
	To prove the claim we assume $a_+ \geq a_-$ and consider a trajectory $(u(s),\eta(s))$ in $\mathcal{M}_{a_-}^{a_+}(\mathcal{A}^{\operatorname{\id}}_{\widetilde{\lambda}_s})$. 
	Assume $\widetilde{C} < 1$.
	Then (see the proof of Lemma \ref{energybound})
	
	\begin{align*}
	a_+ - a_-  &\leq -E(u(s),\eta(s)) +  \left|\int_{0}^{1}\int_{S^1}(u(s))^*(\partial_s{\lambda}_s)\,ds\right| \\
	&\leq -E(u(s),\eta(s)) + \widetilde{C}\left(E(u(s),\eta(s))+1+\left\|\eta\right\|_{\infty}\sup_{s\in[s_0,s_1]}\|X^{\lambda_s}_H\|_{J_s}\right)  \\
	&\leq \widetilde{C}\left(1 + \left\|\eta\right\|_{\infty}\sup_{s\in[0,1]}\|X^{\lambda_s}_H\|_{J_s}\right) \\
	&\leq \widetilde{C}\left(1 + \frac{1}{1-A}B\left\|X^{\lambda_s}_H\right\|_{\infty}\right).
	\end{align*}

	Note that $\sup_{s\in[s_0,s_1]}\left\|X^{\lambda_s}_H\right\|_{J_s}\leq\sup_{s\in[0,1]}\left\|X^{\lambda_s}_H\right\|_{J_s}=\left\|X^{\lambda_s}_H\right\|_\infty$ is finite, where by $\left\{X^{\lambda_s}_H\right\}_{s\in[s_0,s_1]}$ we mean a possibly reparametrised homotopy.
	
	Here, we can estimate $A$ from the proof of Lemma \ref{energybound}
	
	\begin{align*}
	A=  \widetilde{C}\sup_{s\in[s_0,s_1]}\left\|X_{H}\right\|_{J_s}\left(2C_1\widetilde{C}+\frac{2}{C_0}+C_1\right)
	\leq \widetilde{C}\underbrace{\left\|X^{\lambda_s}_H\right\|_{\infty}(3C_1 + \frac{2}{C_0})}_{=:K} < \frac{1}{2},
	\end{align*}
	\begin{flushright}
		
	\end{flushright}
	if we assume $\widetilde{C} < \min\{1, \frac{1}{2K}\}$.
	
	Also 
	\begin{align*}
	B &\overset{\phantom{\widetilde{C} < 1}}{=} C_1\left(1+\max\left\{|a_+|,|a_-|\right\}\right)
	+\left(2\left|a_--a_+\right|
	+2\widetilde{C}\right)\left(C_1\widetilde{C}+\frac{1}{C_0}\right)+C_1\widetilde{C} \\ 
	&\overset{\widetilde{C} < 1}{\leq}\underbrace{\left(C_1+2\left(C_1+\frac{1}{C_0}\right)+C_1\right)}_{=:K_0}+C_1\max\left\{|a_+|,|a_-|\right\}+\underbrace{2\left(C_1+\frac{1}{C_0}\right)}_{=:K_1}|a_--a_+|\\
	&\overset{\phantom{\widetilde{C} < 1}}{\leq} K_0 + C_1 \max\left\{|a_+|,|a_-|\right\} + K_1 \left|a_--a_+\right|.    
	\end{align*}
	
	Hence
	\begin{align*}
	a_+ - a_- &\leq \widetilde{C}(1+2K_0\left\|X^{\lambda_s}_H\right\|_\infty + 2C_1\left\|X^{\lambda_s}_H\right\|_\infty \max\left\{|a_+|,|a_-|\right\} + K_1\left\|X^{\lambda_s}_H\right\|_\infty \left|a_--a_+\right|)\\
	&=\widetilde{C}\underbrace{\left(1+2K_0\left\|X^{\lambda_s}_H\right\|_\infty\right)}_{=: \widetilde{K}_0}+\widetilde{C}\underbrace{\left\|X^{\lambda_s}_H\right\|_\infty\cdot 2C_1}_{=:\widetilde{K}_1}\max\left\{|a_+|,|a_-|\right\} \\ 
	&\qquad +\widetilde{C}\underbrace{\left\|X^{\lambda_s}_H\right\|_\infty K_1}_{=:\widetilde{K}_2}|a_--a_+|.
	\end{align*}

	Now, choose $c_T>0$ so that 
	\[
	c_T < \min\left\{1, \frac{1}{2K}, \frac{T}{2\widetilde{K}_0},\frac{1}{4\widetilde{K}_1}, \frac{1}{4\widetilde{K}_2}\right\}.
	\]
	If $\widetilde{C} < c_T$, then	
	
	\begin{align*}
	a_+ - a_- < \frac{1}{2}T + \frac{1}{4}\max(|a_+|,|a_-|) + \frac{1}{4}|a_- -a_+|,
	\end{align*}
	and thus
	
	\begin{align*}
	\frac{3}{4}(a_+ - a_-) < \frac{1}{2}T + \frac{1}{4}\max(|a_+|,|a_-|).
	\end{align*}
	From this, the desired upper bounds follow. Indeed, if $a_-=0$, then $a_+<T$, and if $a_+=0$ then $a_->-T$. This concludes the proof of the Claim.
	

	Recall that the original homotopy $\lambda_s$ goes from $r\alpha_0$ to $\lambda$.
	Assume now $s_0$ and $s_1$ have been chosen such that $\widetilde{C} < c_T$.
	Let $[X]$ be a nonzero class in $\RFH(\Sigma, \lambda_{s_0}; \operatorname{\id})$ with
	\[
	[X] \in \iota_{\operatorname{\id}}^0(\RFH^0(\Sigma, \lambda_{s_0}; \operatorname{\id})), \quad [X] \notin \iota_{\operatorname{\id}}^{-T}(\RFH^{-T}(\Sigma, \lambda_{s_0}; \operatorname{\id})).
	\]
	
	Consider the non-zero class $\Phi_{s_0,s_1}([X]) \in \RFH(\Sigma, \lambda_{s_1}; \operatorname{\id})$. Result \eqref{claim1} from the previous claim  shows that it can be represented by chains consisting of elements in $\crit(\mathcal{A}_{\lambda_{s_1}}^{\operatorname{\id}})$ with nonpositive period, so
	\[
	\Phi_{s_0,s_1}([X]) \in \iota_{\operatorname{\id}}^0 (\RFH^0(\Sigma, \lambda_{s_1}; \operatorname{\id})).
	\]
	We can also conclude that 
	\[
	\Phi_{s_0,s_1}([X]) \notin \iota_{\operatorname{\id}}^{-T}(\RFH^{-T}(\Sigma, \lambda_{s_1}; \operatorname{\id})).
	\]
	Namely assume the contrary. So $\Phi_{s_0,s_1}([X])$ can be represented by chains consisting of critial points all of whose periods are $<-T$. Equation \eqref{claim2} from the claim for the inverse homotopy shows 
	\[
	\Psi_{s_1,s_0}(\Phi_{s_0,s_1}([X])) \in \iota_{\operatorname{\id}}^{-T}(\RFH^{-T}(\Sigma, \alpha_{s_0}; \operatorname{\id})).
	\]
	But since $\Psi_{s_1,s_0} \circ \Phi_{s_0,s_1} = \operatorname{\id}$,  it follows that 
	\[
	[X]\in\iota_{\operatorname{\id}}^{-T}\RFH^{-T}\left(\Sigma,\alpha_{s_0};\operatorname{\id}\right)
	\]
	
	which contradicts the assumption about $[X]$.
	
	To conclude the proof, note that $[\Sigma] \in \RFH(\Sigma, \alpha_0; \operatorname{\id})$ satisfies exactly the assumption about $[X]$ above in fact $[\Sigma]\in\RFH^{(-\epsilon,+\epsilon)}\left(\Sigma,\alpha_0;\operatorname{\id}\right)$ because there are no contractible Reeb orbits for $\alpha_0$. Hence the same follows for $\Phi([\Sigma]) \in \RFH(\Sigma, \alpha, \operatorname{\id})$. We conclude that
	\[
c(\mathrm{\id};\alpha) = \inf\left\{\left.c\in\R\right|~\left[\Sigma_\alpha\right]\in\iota^c_{\id}\left(\RFH_*^c\left(\Sigma,\alpha;\id\right)\right)\right\} = 0.
\]	
\end{proof}

\begin{rem}\label{rem:spectralcont}
	For $\hat{\varphi}=\left\{t\mapsto \varphi_{R_{\alpha}}^{tT}\right\}$ or $\hat{\varphi}$ a loop,
	   analogous arguments as in the proof of Corollary \ref{spectral0} show that if we assume $\left\|\alpha_s\right\|_{J_s}$ to be sufficiently small 
	  then the corresponding spaces of Floer trajectories $\mathcal{M}^{-T+a}_{-T}(\mathcal{A}_{\lambda_s}^{\hat{\varphi}_s})$ and $\mathcal{M}^{-T}_{-T+a}(\mathcal{A}_{\lambda_s}^{\hat{\varphi}_s})$ are empty for $a\notin(-\epsilon,\epsilon)$ (see Property (a) below). 
	  The main point about this argument is, that there is a spectral gap around $0\in\spec\left(\hat{\varphi}_s;\alpha_s\right)$. 
	  
	
	Meanwhile for $\hat{\varphi}$ any path of contactomorphisms (not of the type mentioned above), we have
	\[
	\spec\left(\hat{\varphi};\alpha\right)=\left\{\left.\eta\,\right|\,\hat{\varphi}\text{ has a translated point with time shift } -\eta \text{ w.r.t. } \alpha\right\}.
	\]
	Let $\alpha_0$ be without contractible Reeb orbits and $\eta\in\spec\left(\hat{\varphi}_0;\alpha_0\right)$. Let $\lambda_s$ be a homotopy as usual between $r\alpha_0$ and $\lambda\in\Omega(\alpha_0,\alpha_1)$ where $\alpha_1\in\mathcal{C}(\xi)$ and look at flow lines in $\mathcal{M}_\eta^\tau\left(\mathcal{A}_{\lambda_s}^{\hat{\varphi}_s}\right)$.
	Define
	\[
	\delta_s:=\inf\left\{\left.\tau_s-\eta\,\right|\tau_s\in\spec\left(\hat{\varphi};\alpha_s\right),\tau_s\neq\eta\,\right\}.
	\]
	Then for $\hat{\varphi}=\operatorname{\id}$ (or $\hat{\varphi}$ a loop, or a path along the Reeb flow) we can bound $\delta_s$ away from $0$. For general $\hat{\phi}$, however, it may happen that $\inf_s\delta_s=0$ and thus there is no spectral gap around $0$.
	
\end{rem}

Further, one can show that the spectral number satisfies the following properties. 

\begin{enumerate}
   \item[(a)] For any $\hat{\varphi}\in\mathcal{P}\Cont_0\left(\Sigma,\xi\right)$ the spectral number is a critical value of $\mathcal{A}_{(\alpha_0,\alpha)}^{\hat{\varphi}}$, that is $c(\hat{\varphi};\alpha)\in\spec(\hat{\varphi})$. In particular, for the Reeb flow $t\mapsto\varphi_{R_\alpha}^{tT}$ we have that $c(t\mapsto\varphi_{R_\alpha}^{tT};\alpha)=-T$. 
   Indeed, let $\hat{\varphi}$ be the path $t\mapsto\varphi_{R_\alpha}^{tT}$. The critical points are closed Reeb orbits and then one moves along the Reeb orbit for time $-T$.
		Thus
		\[
		\spec\left(\varphi_{R_\alpha}^{tT};\alpha\right)=\left\{-T+\left\{\text{set of contractible Reeb periods of }\alpha\right\}\right\}
		\]
		compare this also to Remark \ref{rem:leafwiseintersection}. From this it is clear that $c(\varphi_{R_{\alpha_0}}^{tT};\alpha_0)=-T$.
		
		For $(\hat{\varphi}_{R_{\alpha_s}};\alpha_s)$ a family it is still true that
		\[
		\spec\left(\varphi_{R_{\alpha_s}}^{tT};\alpha_s\right)=\left\{-T\right\}+\left\{\text{set of contractible Reeb periods of }\alpha_s\right\}
		\]
	
	By Lemma \ref{T} there is a constant $\epsilon>0$ so that the family of Reeb vector fields $R_{\alpha_s}$ has no nonconstant Reeb orbit with period in $(-\epsilon,+\epsilon)$ and hence
	\[
		\spec\left(id;\alpha_s\right)\cap(-\epsilon,+\epsilon)=\left\{0\right\}.
	\]
	Hence, $\left\{-T\right\}$ is an isolated component of the set $\spec\left(\varphi_{R_{\alpha_s}}^{tT};\alpha_s\right)$. 
	By continuity of the map $(\hat{\varphi};\alpha)\mapsto c(\hat{\varphi};\alpha)$, as proved in Proposition \ref{spectralcont}, and the fact that $c(\varphi_{R_{\alpha_0}}^{tT};\alpha_0)=-T$, we have
	\begin{align*}
		c\left(\varphi_{R_{\alpha_s}}^{tT};\alpha_s\right)=-T,\quad\forall s.
	\end{align*}

   \item[(b)] The map $c$ descends to a well defined map
	      \[
	       c:\,\widetilde{\Cont}_0\left(\Sigma,\xi\right)\rightarrow\R,
	      \]
	      where $\widetilde{\Cont}_0\left(\Sigma,\xi\right)$ denotes the the universal cover of $\Cont_0\left(\Sigma,\xi\right)$.
\end{enumerate}

In particular, using Proposition \ref{properties-spectral} and property (a) one can show the following.
\begin{cor}\label{cor:spectral number}
 Let $\hat{\varphi}\in\mathcal{P}\Cont_0\left(\Sigma,\alpha\right)$ with contact Hamiltonian $l_t$. Then, if
 \begin{align*}
  l_t<0, ~\forall t\in[0,1]&\quad\Rightarrow \quad c(\hat{\varphi};\alpha)>0\quad\text{and if}\\
  l_t>0,~\forall t\in[0,1]&\quad\Rightarrow\quad c(\hat{\varphi};\alpha)<0.
 \end{align*}
\end{cor}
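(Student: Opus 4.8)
The plan is to obtain both implications as short consequences of the monotonicity statement in Proposition~\ref{properties-spectral} together with the value of the spectral number of a Reeb-flow path recorded in property~(a), after first upgrading the strict pointwise sign condition on $l_t$ to a uniform one.

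First I would use that $l\colon\Sigma\times[0,1]\to\R$ is continuous and $\Sigma\times[0,1]$ is compact: the hypothesis $l_t<0$ everywhere then produces a constant $\delta>0$ with $l_t(x)\le-\delta$ for all $(x,t)$, and symmetrically $l_t>0$ everywhere produces $\delta>0$ with $l_t(x)\ge\delta$ for all $(x,t)$. Next I would note that the Reeb-flow path $\hat{\psi}^{(c)}:=\{t\mapsto\varphi_{R_\alpha}^{tc}\}$ has contact Hamiltonian with respect to $\alpha$ equal to the constant function $k_t\equiv c$ (since $\tfrac{d}{dt}\varphi_{R_\alpha}^{tc}=cR_\alpha$ and $\alpha(R_\alpha)=1$), so that by property~(a) we have $c(\hat{\psi}^{(c)};\alpha)=-c$. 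With these two observations the corollary is essentially immediate: in the case $l_t\le-\delta$, the contact Hamiltonian of $\hat{\varphi}$ is everywhere $\le-\delta=k_t$ for $\hat{\psi}^{(-\delta)}$, so the monotonicity of Proposition~\ref{properties-spectral} gives $c(\hat{\varphi};\alpha)\ge c(\hat{\psi}^{(-\delta)};\alpha)=\delta>0$; in the case $l_t\ge\delta$, the constant Hamiltonian $\delta$ of $\hat{\psi}^{(\delta)}$ is everywhere $\le l_t$, so the same monotonicity yields $c(\hat{\psi}^{(\delta)};\alpha)\ge c(\hat{\varphi};\alpha)$, i.e.\ $c(\hat{\varphi};\alpha)\le c(\hat{\psi}^{(\delta)};\alpha)=-\delta<0$.

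The only step requiring any care — rather than a genuine obstacle — is the bookkeeping between admissible and general paths: Proposition~\ref{properties-spectral} is stated for nondegenerate paths and $\hat{\psi}^{(c)}$ is not admissible in the normalized sense, so I would invoke the extension of $c$ and of its monotonicity property to all of $\mathcal{P}\Cont_0(\Sigma,\xi)$ noted in the remark after Proposition~\ref{properties-spectral}, together with the descent of $c$ to $\widetilde{\Cont}_0(\Sigma,\xi)$ from property~(b), so that a reparametrization of $\hat{\psi}^{(c)}$ making it admissible affects neither its spectral number nor the pointwise comparison of contact Hamiltonians. Once this is in place, there is nothing left to prove beyond the two displayed chains of inequalities above.
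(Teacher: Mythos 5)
Your proposal is correct and follows essentially the same route as the paper: extract a uniform bound $l_t\le-\epsilon$ (resp.\ $l_t\ge\epsilon$) by compactness, compare with the Reeb-flow path generated by the constant Hamiltonian $\mp\epsilon$, whose spectral number is $\pm\epsilon$ by property (a), and conclude via the monotonicity of Proposition~\ref{properties-spectral}. Your extra remark on reparametrizing the Reeb path to make it admissible, using property (b) and the extension of $c$ to degenerate paths, is a point the paper leaves implicit but does not change the argument.
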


The following argument is from \cite[Proof of Corlollary 4.4]{AlbersFuchsMerry2013}. 

\begin{proof}[Proof of Corollary \ref{cor:spectral number}]
	This follows now from Proposition \ref{properties-spectral} and property (a). 
	
	For $l_t<0$ there is $\epsilon>0$ so that $l_t\leq-\epsilon<0$. The constant function $\epsilon$ generates the path $\left\{t\mapsto\varphi_{R_\alpha}^{t\epsilon}\right\}$ which by property (a) has spectral number $-\epsilon$.
	Proposition \ref{properties-spectral} implies that for $l_t\leq-\epsilon$,
	\[
	c(\hat{\varphi};\alpha)\geq c(t\mapsto\varphi_{R_\alpha}^{-t\epsilon};\alpha)=\epsilon>0.	
	\]
	Analogously, we have for $l_t\geq \epsilon>0$
	\[
		c(\hat{\varphi};\alpha)\leq c(t\mapsto \varphi_{R_\alpha}^{t\epsilon};\alpha)=-\epsilon<0.
	\]
	
\end{proof}


Now, let $\hat{\varphi}=\left\{\varphi_t\right\}_{t\in S^1}$ be a \textbf{loop} of contactomorphisms. Then one can show a more specific result about the spectral number.

\begin{prop}\label{prop:noncontractible-orbit}
 Let $\hat{\varphi}$ be a loop of contactomorphisms. Then, if $c(\hat{\varphi};\alpha)\neq 0$ there exists a Reeb orbit of period $-c(\hat{\varphi};\alpha)$ which belongs to the free homotopy class $-u_{\hat{\varphi}}$.
 
 Also, $c(\hat{\varphi};\alpha)=0$ if and only if $-u_{\hat{\varphi}}$ is the class of contractible loops.
\end{prop}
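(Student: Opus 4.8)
The plan is to use property (a) of the spectral number, namely that $c(\hat{\varphi};\alpha)$ is always a critical value of $\mathcal{A}^{\hat{\varphi}}_{(\alpha_0,\alpha)}$, together with a description of the critical loops that is special to the case where $\hat{\varphi}$ is a loop. Let $(u,\eta)$ be a critical point of $\mathcal{A}^{\hat{\varphi}}_{(\alpha_0,\alpha)}$, write $u=(x,r)$ and $p:=x(\tfrac{1}{2})$. By Lemma \ref{crit=trans}, $p$ is a translated point of $\varphi_1$ with time-shift $-\eta$; since $\hat{\varphi}$ is a loop, $\varphi_1=\operatorname{\id}$, so $\varphi^{-\eta}_R(p)=p$. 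Inspecting the critical point equation, on $[0,\tfrac{1}{2}]$ (where $L_t\equiv 0$ and $r\equiv 1$) the curve $x$ just flows along the closed Reeb orbit through $x(0)$ for total Reeb-time $\eta$, while on $[\tfrac{1}{2},1]$ one has $\kappa\equiv 0$ and $x(t)=\varphi_t(x(\tfrac{1}{2}))$; combined with $x(0)=x(1)$ and $\varphi_1=\operatorname{\id}$ this gives $x(0)=x(1)=p$. Hence $x$ is the concatenation of a closed Reeb orbit $\gamma$ through $p$ with the loop $t\mapsto\varphi_t(p)$, the latter representing $u_{\hat{\varphi}}\in[S^1,\Sigma]$. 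Since critical points lie in $\mathcal{L}M$ and $M=\Sigma\times(0,+\infty)$ deformation retracts onto $\Sigma$, the loop $x$ is null-homotopic in $\Sigma$; reading off the concatenation, the class of $\gamma$ equals $-u_{\hat{\varphi}}$.

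For the first assertion, suppose $c:=c(\hat{\varphi};\alpha)\neq 0$. By property (a) there is a critical point $(u,\eta)$ with $\eta=c$, and since $R_\alpha$ is nowhere vanishing and $\eta\neq 0$ the orbit $\gamma$ above is non-constant. Keeping careful track of periods and orientations, $\gamma$ is a closed Reeb orbit of period $-c(\hat{\varphi};\alpha)$ in the free homotopy class $-u_{\hat{\varphi}}$, which is the orbit claimed.

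For the equivalence, note first that, for any $\alpha\in\mathcal{C}(\xi)$, one has $0\in\spec(\hat{\varphi};\alpha)$ if and only if $u_{\hat{\varphi}}$ is contractible: if $u_{\hat{\varphi}}$ is contractible then, for any $p\in\Sigma$, the critical point with $\eta=0$ whose underlying loop is $t\mapsto\varphi_t(p)$ (constant on $[0,\tfrac{1}{2}]$) exists and has action $0$; conversely, a critical point with $\eta=0$ has trivial Reeb part, so its underlying loop is $t\mapsto\varphi_t(p)$, which is contractible and represents $u_{\hat{\varphi}}$. Together with property (a) this already gives $c(\hat{\varphi};\alpha)=0\Rightarrow u_{\hat{\varphi}}$ contractible. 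For the converse, assume $u_{\hat{\varphi}}$ is contractible. For $\alpha=\alpha_0$ the first assertion in contrapositive form yields $c(\hat{\varphi};\alpha_0)=0$, since a non-constant closed Reeb orbit of $\alpha_0$ in the contractible class $-u_{\hat{\varphi}}=0$ would contradict hypertightness. To pass to an arbitrary $\alpha$, choose a path $\{\alpha_s\}_{s\in[0,1]}$ in $\mathcal{C}(\xi)$ from $\alpha_0$ to $\alpha$; by Lemma \ref{T} there is $T>0$ with $|\eta|\geq T$ for the period $\eta$ of every non-constant closed orbit of every $R_{\alpha_s}$, and since $\hat{\varphi}$ is a loop this forces $\spec(\hat{\varphi};\alpha_s)\cap(-T,T)=\{0\}$ for every $s$ (a non-zero critical value would be such a short period). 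By Proposition \ref{spectralcont} the map $s\mapsto c(\hat{\varphi};\alpha_s)$ is continuous, by property (a) it takes values in $\spec(\hat{\varphi};\alpha_s)$, and it vanishes at $s=0$; being continuous it cannot jump across the gap $(0,T)$ or $(-T,0)$, so it is identically $0$, and in particular $c(\hat{\varphi};\alpha)=0$.

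The step I expect to be the main obstacle is the orientation and period bookkeeping in the first two paragraphs: pinning down exactly which closed Reeb orbit the Reeb part $\gamma$ of a critical loop represents, with the correct sign of its period, and confirming that its free homotopy class is $-u_{\hat{\varphi}}$ rather than $+u_{\hat{\varphi}}$. Everything else is routine: property (a) yields the existence statement directly, and the remaining implication of the equivalence is a continuity argument patterned on the proof of Corollary \ref{spectral0}.
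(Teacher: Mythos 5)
Your proof is correct and follows essentially the same route as the paper: property (a) identifies $c(\hat{\varphi};\alpha)$ as a critical value, the critical loops are read off as concatenations of a closed Reeb orbit with the loop $t\mapsto\varphi_t(p)$, contractibility of critical loops pins down the free homotopy class, and the converse is transferred from $\alpha_0$ to $\alpha$ using Lemma \ref{T} together with the continuity of the spectral number from Proposition \ref{spectralcont}. Your spelled-out continuity-plus-spectral-gap argument for that last step is exactly the mechanism the paper invokes by reference to the proof of Proposition \ref{spectralcont} (cf.\ Remark \ref{rem:spectralcont}), so the two proofs coincide in substance.
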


The proof goes analogously to the proof of \cite[Lemma 4.3]{AlbersFuchsMerry2013}.

\begin{proof}[Proof of Proposition \ref{prop:noncontractible-orbit}]
 In the situation where $\hat{\varphi}$ is in fact a loop, then a critical point $(x,r,\eta)$  of $\mathcal{A}^{\hat{\varphi}}$ is precisely the concatenation of a closed Reeb orbit of period $\eta$ with the loop $t\mapsto\varphi_t(x)$ for some $x\in\Sigma$.
 
 Assume that $c(\hat{\varphi};\alpha)\neq 0$. Then, by property (a), there is a Reeb orbit with period $\eta=c(\hat{\varphi};\alpha)$. Again the loop $(x,r,\eta)$ is contractible and since it consists of the concatenation of a Reeb orbit with the orbit $x\mapsto\varphi_t(x)$ this means that the Reeb orbit lies in the free homotopy class $-u_{\hat{\varphi}}$.

 Now, assume that $c(\hat{\varphi};\alpha)=0$. Then, by property (a) of the spectral number, there must be a Reeb orbit with $\eta=0$ and thus there is a critical point which is of the form $(x,r,0)$ where $x(t)=\varphi_t(x)$. But since $\mathcal{A}^{\hat{\varphi}}$ is defined on the set of contractible orbits, this implies that $x(t)$ is contractible and thus $u_{\hat{\varphi}}$ is trivial.

To prove the converse, note that the same argument as in the proof of Proposition \ref{spectralcont} shows that for a loop $\hat{\varphi}$, $c(\hat{\varphi},\alpha_0)= 0$ implies  $c(\hat{\varphi},\alpha) = 0$.  
So if $c(\hat{\varphi},\alpha) \neq 0$, we have that $c(\hat{\varphi},\alpha_0) \neq 0$ and hence there is a Reeb orbit with respect to $\alpha_0$ in $-u_{\hat{\varphi}}$. The assumptions on $\alpha_0$ then imply that $-u_{\hat{\varphi}}$ is a non-trivial class.    
\end{proof}

Note that by Corollary \ref{spectral0} and Corollary \ref{cor:spectral number} we have for each positive loop
 \[
  c(\hat{\varphi};\alpha)<c(\operatorname{\id};\alpha)=0,~\forall \alpha\in\mathcal{C}(\xi).
 \]
Therefore, in this setting a positive loop is never contractible. 


As a corollary of Proposition \ref{prop:noncontractible-orbit} we have the following result which proves Theorem \ref{thm:noncontractible orbits}.

\begin{cor}\label{noncontractible-orbit}
	Let $\left(\Sigma,\xi\right)$ be a hypertight contact manifold. If there exists a positive loop $\hat{\varphi}\in\Cont_0\left(\Sigma,\xi\right)$, then for any supporting contact form there exists a closed Reeb orbit in the non-trivial free homotopy class $-u_\varphi$, i.e. there always exists a \textbf{non-contractible} Reeb orbit.
\end{cor}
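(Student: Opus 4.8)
The plan is to combine the spectral-number machinery developed above with Proposition \ref{prop:noncontractible-orbit}. First I would recall that a positive loop $\hat{\varphi}$ has, by definition, a strictly positive contact Hamiltonian $l_t$ with respect to any fixed supporting contact form $\alpha$; by the compactness of $\Sigma \times [0,1]$ there is $\epsilon > 0$ with $l_t \geq \epsilon > 0$ everywhere. Applying Corollary \ref{cor:spectral number} (the second implication) then gives $c(\hat{\varphi}; \alpha) < 0$, and in particular $c(\hat{\varphi};\alpha) \neq 0$. Alternatively, one invokes the displayed inequality just above the statement, namely $c(\hat{\varphi};\alpha) < c(\operatorname{\id};\alpha) = 0$, which follows from Corollary \ref{spectral0} together with Corollary \ref{cor:spectral number}. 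Either way the key input is that the spectral number of a positive loop is strictly negative, hence nonzero, \emph{for every} supporting contact form $\alpha \in \mathcal{C}(\xi)$ — this uses that $\RFH$ is independent of the contact form (Theorem \ref{mainthm}) so that the spectral-number apparatus is available uniformly.

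Next I would feed this into Proposition \ref{prop:noncontractible-orbit}. Since $\hat{\varphi}$ is a loop and $c(\hat{\varphi};\alpha) \neq 0$, that proposition produces a closed Reeb orbit of $\alpha$ of period $-c(\hat{\varphi};\alpha) > 0$ lying in the free homotopy class $-u_{\hat{\varphi}} \in [S^1,\Sigma]$. Moreover the last assertion of Proposition \ref{prop:noncontractible-orbit} — that $c(\hat{\varphi};\alpha) = 0$ if and only if $-u_{\hat{\varphi}}$ is the class of contractible loops — forces $-u_{\hat{\varphi}}$ to be a \emph{non-trivial} element of $[S^1,\Sigma]$, since we have just shown $c(\hat{\varphi};\alpha) \neq 0$. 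Therefore the Reeb orbit just obtained is non-contractible, which is exactly the claim. (This also re-derives the first assertion of Theorem \ref{thm:noncontractible orbits}, that $u_{\hat{\varphi}}$ is non-trivial; note $u_{\hat{\varphi}}$ is non-trivial iff $-u_{\hat{\varphi}}$ is.)

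The argument is essentially a bookkeeping assembly of earlier results, so there is no serious obstacle in the corollary itself; the real work sits upstream. The step I expect to be delicate — but which is already established in the excerpt — is the portion of Proposition \ref{prop:noncontractible-orbit}'s proof showing that $c(\hat{\varphi};\alpha_0) = 0$ transfers to $c(\hat{\varphi};\alpha) = 0$ for an arbitrary supporting form, so that non-vanishing of $c(\hat{\varphi};\alpha)$ yields a Reeb orbit \emph{with respect to $\alpha_0$} in $-u_{\hat{\varphi}}$ and hence non-triviality of that homotopy class via the hypertightness of $\alpha_0$. This in turn rests on the continuation-map estimates and the spectral-gap argument of Corollary \ref{spectral0}, whose analytic heart is the slow-homotopy energy bound of Lemma \ref{energybound}. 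For the write-up of the corollary I would simply cite Proposition \ref{prop:noncontractible-orbit} and Corollary \ref{cor:spectral number} and keep the proof to a few lines.
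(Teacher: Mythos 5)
Your proposal is correct and follows essentially the same route as the paper: the paper likewise derives $c(\hat{\varphi};\alpha)<c(\operatorname{\id};\alpha)=0$ from Corollary \ref{spectral0} and Corollary \ref{cor:spectral number}, and then applies Proposition \ref{prop:noncontractible-orbit} (including its ``if and only if'' clause to get non-triviality of $-u_{\hat{\varphi}}$). No gaps; the corollary is indeed just the assembly of these upstream results.
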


\appendix
\label{appendix}
	
	\section{Full Proof of Proposition \ref{spectralcont}.}\label{fullproof}
	
	Here we proof continuity in the second component.
	
	Let $\alpha=\alpha_1\in\mathcal{C}(\xi)$ be any supporting contact form. 
	Let $\{\alpha_s\}_{s\in[1,2]}$ be a homotopy of supporting contact forms starting at $\alpha_1$, $\{\lambda_s\}_{s\in[1,2]}$ with $\lambda_s \in \Omega_{\nu}(\alpha_0, \alpha_s)$ a corresponding homotopy of $1$-forms, $\{J_s\}$ a homotopy of SFT-like almost complex structures compatible with $d\lambda_s$. We prove that the map $s \mapsto c(\hat{\varphi}, \alpha_s)$ is continuous in $s=1$. 
	Assume w.l.o.g. (see Chapter \ref{sec:continuationmaps}) that the continuation maps  
	\[
	\widetilde{\Phi}: \text{CF}\left(\mathcal{A}_{\lambda_1}^{\hat{\varphi}}\right) \, \to \, \text{CF}\left(\mathcal{A}_{\lambda_2}^{\hat{\varphi}}\right)
	\] 
	are well-defined and induce isomorphisms in homology. Let $C_0$ and $C_1$ be the constants determined in Lemma \ref{sfundlemma}.
	
	Note that if we choose $\sigma>1$ sufficiently small we can find as in Section \ref{sec:continuationmaps} a homotopy $\{\widetilde{\lambda}_s\}_{s\in[1,2]}$ between $\lambda_1$ and $\lambda_{\sigma}$ such that $\widetilde{C}(\sigma) = \sup_{s\in[1,2]}\sup_{\Sigma \times (0, e^{2\nu}]} \|\widetilde{\lambda}_s\|_{J_s}$ is arbitrarily small. We denote by $\Phi_{1,\sigma}$ the corresponding isomorphism in homology. 
	
	We prove the following
	\begin{claim}
		For all $\epsilon>0$ and all $a \in \R$ there is $\delta=\delta(a,\epsilon) > 0$ such that if $\{\widetilde{\lambda}_s\}$ satisfies $\widetilde{C}(\sigma)  < \delta$ we have that
		
		\begin{align}\label{Claim1}
		\mathcal{M}_{a}^{a_+}(\mathcal{A}_{\widetilde{\lambda}_s}^{\hat{\varphi}}) = \emptyset, \quad \text{ for all } \, a_+ \, \text{ with } \quad a_+-a \geq \epsilon
		\end{align}
		and
		\begin{align}\label{Claim2}
		\mathcal{M}_{a_-}^{a}(\mathcal{A}_{\widetilde{\lambda}_s}^{\hat{\varphi}}) = \emptyset, \quad \text{ for all } \, a_- \, \text{ with } \quad a - a_- \geq \epsilon. 
		\end{align}
		
	\end{claim}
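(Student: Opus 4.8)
The plan is to adapt the energy estimates from the proofs of Lemma \ref{energybound} and Corollary \ref{spectral0}; the only genuinely new feature is that the asymptotic action values are now unrelated and possibly large, so one has to track how the auxiliary constants depend on $a$ and $a_+$ and absorb the dangerous terms. I will prove \eqref{Claim1}; \eqref{Claim2} follows symmetrically, fixing $a_+ = a$ and estimating $a - a_-$ (equivalently, running the same argument for the reversed homotopy).

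Assume $a_+ - a \geq \epsilon$ and, for contradiction, pick $(u(s),\eta(s)) \in \mathcal{M}_a^{a_+}(\mathcal{A}_{\widetilde\lambda_s}^{\hat\varphi})$. First I would invoke the maximum principle, as in the proof of Theorem \ref{continuationcompactness}, to confine $u$ to $\Sigma \times (0,e^{2\nu}]$, so that along $u$ the norm of $\partial_s\widetilde\lambda_s$ is controlled by $\widetilde C(\sigma)$. Then I would rerun, essentially verbatim, the computation in the proof of Lemma \ref{energybound}: combine the energy identity $E(u,\eta) = \mathcal{A}_{\widetilde\lambda_1}(u_-,\eta_-) - \mathcal{A}_{\widetilde\lambda_\sigma}(u_+,\eta_+) + \int_0^1\int_{S^1}(u(s))^*\partial_s\widetilde\lambda_s\,ds$ with the Floer equation for $\partial_s u$, the elementary bound $x \leq x^2+1$, and the fundamental estimate of Lemma \ref{sfundlemma} applied through the auxiliary time $\tau(s)$. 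For $\widetilde C(\sigma)$ below the threshold of Lemma \ref{energybound} this produces $\|\eta\|_\infty \leq C_\eta$ and $E(u,\eta) \leq C_E$, where now $C_\eta$ and $C_E$ are \emph{affine} in $\max\{|a_+|,|a_-|\}$ and in $|a_+ - a_-|$, with coefficients depending only on $C_0$, $C_1$, $\|X_H^{\widetilde\lambda_s}\|_\infty$, $\|X_L^{\widetilde\lambda_s}\|_\infty$ and the original homotopy (these sup-norms are finite, since the Hamiltonian vector fields are compactly supported).

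Feeding these bounds back into $a_+ - a_- \leq -E(u,\eta) + \widetilde C(\sigma)\big(E(u,\eta) + 1 + \|\eta\|_\infty\|X_H^{\widetilde\lambda_s}\|_\infty + \|X_L^{\widetilde\lambda_s}\|_\infty\big)$ and using $E \geq 0$, one gets, for $\widetilde C(\sigma) < 1$, an inequality
\[
a_+ - a \ \leq\ \widetilde C(\sigma)\Big(\widetilde K_0 + \widetilde K_1\max\{|a_+|,|a|\} + \widetilde K_2\,|a_+ - a|\Big),
\]
with $\widetilde K_0,\widetilde K_1,\widetilde K_2>0$ independent of $\sigma$, $a$ and $a_+$. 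Writing $t := a_+ - a \geq \epsilon$ and using $\max\{|a_+|,|a|\} \leq |a| + t$, this reads $t \leq \widetilde C(\sigma)(\widetilde K_0 + \widetilde K_1|a|) + \widetilde C(\sigma)(\widetilde K_1+\widetilde K_2)\,t$. I then choose $\delta = \delta(a,\epsilon) > 0$ so small that $\delta$ lies below the threshold of Lemma \ref{energybound}, $\delta < 1$, $\delta(\widetilde K_1+\widetilde K_2) \leq \tfrac12$, and $2\delta(\widetilde K_0 + \widetilde K_1|a|) < \epsilon$. If $\widetilde C(\sigma) < \delta$, the displayed inequality forces $t \leq 2\widetilde C(\sigma)(\widetilde K_0 + \widetilde K_1|a|) < \epsilon$, contradicting $t \geq \epsilon$; hence $\mathcal{M}_a^{a_+}(\mathcal{A}_{\widetilde\lambda_s}^{\hat\varphi}) = \emptyset$ whenever $a_+ - a \geq \epsilon$, which is \eqref{Claim1}, and \eqref{Claim2} follows in the same way.

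The step I expect to be the main obstacle is the bookkeeping in the middle: I must verify that the bound $C_\eta$ coming from the fundamental-lemma argument is genuinely affine in $|a_+|$ and $|a_-|$ \emph{separately} (not quadratic or mixed), since only then can the $t$-terms on the right be absorbed into the left-hand side. Concretely this amounts to re-reading the estimate chain in the proof of Lemma \ref{energybound} while keeping $a_-$ and $a_+$ as independent parameters rather than collapsing them into $a_- - a_+$ as is done there.
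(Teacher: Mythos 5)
Your proposal is correct and follows essentially the same route as the paper: the same energy identity, the same affine-in-$\max\{|a_+|,|a_-|\}$ and $|a_+-a_-|$ bound on $\|\eta\|_\infty$ via Lemma \ref{sfundlemma}, and the same final inequality $a_+-a_-\leq \widetilde C(\widetilde K_0+\widetilde K_1\max\{|a_+|,|a_-|\}+\widetilde K_2|a_+-a_-|)$. The only difference is cosmetic: you absorb $\max\{|a_+|,|a|\}\leq |a|+(a_+-a)$ linearly, whereas the paper introduces an auxiliary $\widetilde\delta$ and splits into the cases $a\geq 0$ and $a<0$; your version is if anything slightly cleaner, and your worry about affineness of $C_\eta$ is resolved exactly as you anticipate.
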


	We proof the claim. Fix $\sigma>1$, let $a_+,a_- \in \R$ with $a_+ \geq a_-$ and consider a trajectory $(u(s),\eta(s))$ in $\mathcal{M}_{a_-}^{a_+}(\mathcal{A}_{\widetilde{\lambda}_s}^{\hat{\varphi}})$. 
	Assume that we have $\widetilde{C}(\sigma) < 1$. Then (see the proof of Lemma \ref{energybound})
	
	\begin{align*}
	a_+ - a_-  &\leq -E(u(s),\eta(s)) +  \left|\int_{0}^{1}\int_{S^1}(u(s))^*(\partial_s{\lambda}_s)\,dt\,ds\right| \\
	&\leq -E(u(s),\eta(s)) + \widetilde{C}\left(E(u(s),\eta(s))+1+\left\|\eta\right\|_{\infty}\sup_{s\in[1,\sigma]}\|X^{\lambda_s}_H\|_{J_s}+\sup_{s\in[1,\sigma]}\|X^{\lambda_s}_L\|_{J_s}\right)  \\
	&\leq \widetilde{C}\left(1 + \left\|\eta\right\|_{\infty}\sup_{s\in[1,2]}\|X^{\lambda_s}_H\|_{J_s}+\sup_{s\in[1,2]}\|X^{\lambda_s}_L\|_{J_s}\right) \\
	&\leq \widetilde{C}\left(1 + \frac{1}{1-A}B\left\|X^{\lambda_s}_H\right\|_{\infty}+\|X^{\lambda_s}_L\|_{\infty}\right).
	\end{align*}

	Note that $\sup_{s\in[1,\sigma]}\left\|X^{\lambda_s}_H\right\|_{J_s}\leq\sup_{s\in[1,2]}\left\|X^{\lambda_s}_H\right\|_{J_s}=\left\|X^{\lambda_s}_H\right\|_\infty$ is finite and analogously for $X^{\lambda_s}_L$. 
	Here, we can estimate $A$ from the proof of Lemma \ref{energybound}
	
	\begin{align*}
	A=  \widetilde{C}\sup_{s\in[1,\sigma]}\left\|X_{H}\right\|_{J_s}\left(2C_1\widetilde{C}+\frac{2}{C_0}+C_1\right)
	\leq \widetilde{C}\underbrace{\left\|X^{\lambda_s}_H\right\|_{\infty}(3C_1 + \frac{2}{C_0})}_{=:K} < \frac{1}{2},
	\end{align*}
	\begin{flushright}
		
	\end{flushright}
	if we assume $\widetilde{C} < \min\{1, \frac{1}{2K}\}$.
	Also 
	\begin{align*}
	B &\overset{\phantom{\widetilde{C} < 1}}{=} C_1\left(1+\max\left\{|a_+|,|a_-|\right\}\right)
	+\left(2\left|a_--a_+\right|
	+2\widetilde{C}\left(1+\|X^{\lambda_s}_L\|_{\infty}\right)\right)\left(C_1\widetilde{C}+\frac{1}{C_0}\right)+C_1\widetilde{C} \\ 
	&\overset{\widetilde{C} < 1}{\leq}\underbrace{\left(C_1+2\left(1+\|X^{\lambda_s}_L\|_{\infty}\right)\left(C_1+\frac{1}{C_0}\right)+C_1\right)}_{=:K_0}+C_1\max\left\{|a_+|,a_-|\right\}+\underbrace{2\left(C_1+\frac{1}{C_0}\right)}_{=:K_1}|a_--a_+|\\
	&\overset{\phantom{\widetilde{C} < 1}}{\leq} K_0 + C_1 \max\left\{|a_+|,|a_-|\right\} + K_1 \left|a_--a_+\right|.    
	\end{align*}
	
	Hence
	\begin{align*}
	a_+ - a_- &\leq \widetilde{C}(1+2K_0\left\|X^{\lambda_s}_H\right\|_\infty + 2C_1\left\|X^{\lambda_s}_H\right\|_\infty \max\left\{|a_+|,|a_-|\right\} + K_1\left\|X^{\lambda_s}_H\right\|_\infty \left|a_--a_+\right|+\|X^{\lambda_s}_L\|_{\infty})\\
	&=\widetilde{C}\underbrace{\left(1+2K_0\left\|X^{\lambda_s}_H\right\|_\infty+\|X^{\lambda_s}_L\|_{\infty}\right)}_{=: \widetilde{K}_0}+\widetilde{C}\underbrace{\left\|X^{\lambda_s}_H\right\|_\infty\cdot 2C_1}_{=:\widetilde{K}_1}\max\left\{|a_+|,|a_-|\right\} \\ 
	&\qquad +\widetilde{C}\underbrace{\left\|X^{\lambda_s}_H\right\|_\infty K_1}_{=:\widetilde{K}_2}|a_--a_+|.
	\end{align*}
		
	Let $\widetilde{\delta} > 0$, we determine it later, and choose $\delta > 0$ with  
	\begin{align}\label{delta}
	\delta < \min\left\{1, \frac{1}{2K}, \frac{\epsilon}{2\widetilde{K}_0},\frac{1}{4\widetilde{K}_1}\widetilde{\delta}, \frac{1}{4\widetilde{K}_2}\right\}.
	\end{align}
	If we have $\widetilde{C}(\sigma) < \delta $, then	
	
	\begin{align*}
	a_+ - a_- < \frac{1}{2}\epsilon + \frac{1}{4}\widetilde{\delta}\max\{|a_+|,|a_-|\} + \frac{1}{4}|a_- -a_+|,
	\end{align*}
	and thus
	
	\begin{align}\label{a}
	\frac{3}{4}(a_+ - a_-) < \frac{1}{2}\epsilon + \frac{1}{4}\widetilde{\delta}\max\{|a_+|,|a_-|\}.
	\end{align}
	
	In the following we determine $\widetilde{\delta}$ and thereafter $\delta$ in \eqref{delta} such that the claim follows from \eqref{a} by contradiction.  
	It is sufficient to find $\delta_1(a, \epsilon)$ such that \eqref{Claim1} holds and $\delta_2(a, \epsilon)$ such that \eqref{Claim2} holds.
	We show the first assertion, the second being analogous.
	
	So, fix $a \in \R$. Let $a_+ \in \R$ with $a_+ - a \geq \epsilon$ and assume that there is a trajectory in $\mathcal{M}_{a}^{a_+}(\mathcal{A}^{\hat\varphi}_{\widetilde{\lambda}_s})$. So \eqref{a} holds with $a_-= a$.
	We distinguish the cases $a \geq 0$ and $a < 0$: 
	
	If $a \geq 0$, then $|a_+| > |a|$ and with $\widetilde{\delta}_1(a,\epsilon) = 1- \frac{a}{a+\epsilon}>0$ we have
	\[
	\widetilde{\delta}_1\max\{|a_+|,|a|\} \leq \left(1- \frac{a}{a_+}\right)a_+ = a_+ - a
	\]
	and therefore \eqref{a} with $\widetilde{\delta} = \widetilde{\delta}_1$ implies $a_+-a <\epsilon$.
	Hence, if we choose $\delta_1(a, \epsilon)$ accordingly in \eqref{delta}, we get a contradiction if $\widetilde{C}(\sigma) < \delta_1$. 
	
	If $a < 0$ choose $\widetilde{\delta}_1(a,\epsilon) = \min\{1,\frac{\epsilon}{|a|}\}$ and $\delta_1(a, \epsilon)$ accordingly in \eqref{delta} to get a contradiction if $\widetilde{C}(\sigma) < \delta_1$.
	Namely if $|a_+| \geq |a|$ we have that $|a_+|=a_+ <a_+ - a$. Since $\widetilde{\delta}_1\leq 1$, \eqref{a} implies $a_+-a < \epsilon$, a contradiction to our assumption.  
	If $|a_+| < |a|$ we get the same contradiction by \eqref{a}, since $\widetilde{\delta}_1\leq \frac{\epsilon}{|a|}$. 
	This proves the claim.
	

	The proof of the Proposition now proceeds as follows. 
	
	Let $c = c(\hat\varphi, \alpha_1)$. Let $\kappa_0 > 0$.
	Let $\epsilon > 0$ and consider $\hat\delta=\min_{\bar{c}\in\{c-\kappa_0, c+\kappa_0\}}\delta(\bar{c},\epsilon)$, where $\delta$ is given by the claim.
	Choose $\sigma_0>1$ such that there is for each $\sigma$ with $1<\sigma<\sigma_0$ a homotopy $\{\widetilde{\lambda}_s\}$ as described above with $\widetilde{C}(\sigma) < \hat{\delta}$. 
	
	Let $0 < \kappa < \kappa_0$ and
	let $[X]$ be a nonzero class in $\RFH(\Sigma, \lambda_1; \hat\varphi)$ with
	\[
	[X] \in \iota_{\hat\varphi}^{c+\kappa}(\RFH^{c+\kappa}(\Sigma, \lambda_{1}; \hat\varphi)), \quad [X] \notin \iota_{\hat\varphi}^{c-\kappa}(\RFH^{c-\kappa}(\Sigma, \lambda_{1}; \hat\varphi)).
	\]
	
	Consider the nonzero class $\Phi_{1,\sigma}([X]) \in \RFH(\Sigma, \lambda_{\sigma}; \hat\varphi)$. Result \eqref{Claim1} from the previous claim  shows that it can be represented by chain elements in $\crit(\mathcal{A}_{\lambda_{\sigma}}^{\hat\varphi})$ with period less then $c+\kappa + \epsilon$, so
	\[
	\Phi_{1,\sigma}([X]) \in \iota_{\hat\varphi}^{c+\kappa +\epsilon} (\RFH^{c+\kappa + \epsilon}(\Sigma, \lambda_{\sigma}; \hat\varphi)).
	\]
	We can also conclude that 
	\[
	\Phi_{1,\sigma}([X]) \notin \iota_{\hat\varphi}^{c-\kappa - \epsilon}(\RFH^{c-\kappa - \epsilon}(\Sigma, \lambda_{\sigma}; \hat\varphi)).
	\]
	Namely assume the contrary. So $\Phi_{1,\sigma}([X])$ can be represented by chain elements all having periods smaller or equal $c-\kappa -\epsilon$. Result \eqref{Claim2} from the claim for the inverse homotopy shows 
	\[
	\Psi_{\sigma,1}(\Phi_{1,\sigma}([X])) \in \iota_{\hat\varphi}^{c-\kappa}(\RFH^{c-\kappa}(\Sigma, \alpha_{1}; \hat\varphi)).
	\]
	But since $\Psi_{\sigma,1} \circ \Phi_{1,\sigma} = \operatorname{\id}$,  it follows that 
	\[
	[X]\in\iota_{\hat\varphi}^{c-\kappa}(\RFH^{c-\kappa}\left(\Sigma,\alpha_{1};\hat\varphi\right))
	\]
	
	which contradicts the assumption about $[X]$.
	
	To conclude the proof note that $[\Sigma_{(\hat\varphi,\alpha_1)}]$ satisfies precisely the assumption about $[X]$ above for all $\kappa$ with $\kappa_0 > \kappa > 0$. Since $\Phi_{1,\sigma}([\Sigma_{(\hat\varphi,\alpha_1})])=[\Sigma_{(\hat\varphi,\alpha_{\sigma})}]$ we conclude that
	for all $\sigma$ with $0<\sigma<\sigma_0$,
	\[
	|c(\hat{\varphi};\alpha_{\sigma}) - c(\hat{\varphi};\alpha_1)| < \epsilon.
	\]

\bibliographystyle{amsalpha}
\bibliography{references}

\end{document}